\newcommand{\Deform}{\operatorname{Def}}
\newcommand{\Hessian}{\operatorname{Hess}}
\newcommand{\bbR}{\mathbb R}
\newcommand{\divergence}{\textrm{div}}
\newcommand{\T}{\mathcal{T}}
\newcommand{\piola}{\mathcal{P}}
\newcommand{\V}{\mathbb{V}}
\newcommand{\bMT}{\overline{\mathcal{T}}}
\newcommand{\MT}{\mathcal{T}}
\newcommand{\barT}{\overline{T}}
\newcommand{\barx}{\overline{\bx}}
\newcommand{\barq}{\overline{\bf q}}
\newcommand{\bare}{\overline{e}}
\newcommand{\barbn}{\overline{\bf n}}
\newcommand{\barv}{\overline{v}}
\newcommand{\VT}[1][]{ {\bf VT} ({\ifthenelse{\equal{#1}{}}{\gamma}{#1}}) }
\newcommand{\HT}[1][]{ {\bf HT}^{\ifthenelse{\equal{#1}{}}{1}{#1}}(\gamma) }
\newcommand{\HTS}[1][]{ {\bf HT}^{\ifthenelse{\equal{#1}{}}{1}{#1}}(\mathcal S) }
\newcommand{\HTloc}[1]{ {\bf HT}^1(#1) }
\newcommand{\Y}{\mathbb{Y}}
\definecolor{cornellred}{rgb}{0.7, 0.11, 0.11}
\newcommand{\ARD}[1]{\textcolor{black}{{#1}}}
\DeclareFontFamily{U}{matha}{\hyphenchar\font45}
\DeclareFontShape{U}{matha}{m}{n}{
      <5> <6> <7> <8> <9> <10> gen * matha
      <10.95> matha10 <12> <14.4> <17.28> <20.74> <24.88> matha12
      }{}
\DeclareSymbolFont{matha}{U}{matha}{m}{n}
\DeclareFontFamily{U}{mathx}{\hyphenchar\font45}
\DeclareFontShape{U}{mathx}{m}{n}{
      <5> <6> <7> <8> <9> <10>
      <10.95> <12> <14.4> <17.28> <20.74> <24.88>
      mathx10
      }{}
\DeclareSymbolFont{mathx}{U}{mathx}{m}{n}
\DeclareMathDelimiter{\vvvert}{0}{matha}{"7E}{mathx}{"17}
\title{A divergence-conforming finite element method for the surface Stokes equation}
\author{
  Andrea Bonito\thanks{Department of Mathematics, Texas A\&M University, College Station TX, 77843; email: {\tt bonito@math.tamu.edu}. 
  Partially supported by NSF Grant DMS-1817691.
}
\and
Alan Demlow\thanks{Department of Mathematics, Texas A\&M University, College Station TX, 77843; email: {\tt demlow@math.tamu.edu}.
Partially supported by NSF Grant DMS-1720369.
}
\and
Martin Licht\thanks{Department of Mathematics, University of California-San Diego; email: {\tt mlicht@ucsd.edu}.  }
}
\begin{document}

\maketitle

\begin{abstract}  The Stokes equation posed on surfaces is important in some physical models, 
but its numerical solution poses several challenges not encountered in the corresponding Euclidean setting.
These include the fact that the velocity vector should be tangent to the given surface
and the possible presence of degenerate modes (Killing fields) in the solution.
We analyze a surface finite element method which provides solutions to these challenges.
We consider an interior penalty method based on the well-known Brezzi-Douglas-Marini $H({\rm div})$-conforming finite element space.
The resulting spaces are tangential to the surface, but require penalization of jumps across element interfaces in order to weakly maintain $H^1$ conformity of the velocity field.  
In addition our method exactly satisfies the incompressibility constraint in the surface Stokes problem.  
Secondly, we give a method which robustly filters Killing fields out of the solution.  
This problem is complicated by the fact that the dimension of the space of Killing fields may change with small perturbations of the surface.  
We first approximate the Killing fields via a Stokes eigenvalue problem and then give a method 
which is asymptotically guaranteed to correctly exclude them from the solution.  
The properties of our method are rigorously established via an error analysis and illustrated via numerical experiments.  
\end{abstract}

\begin{keywords}
surface Stokes equation; finite element method; surface Stokes eigenvalue problem; Killing fields
\end{keywords}

\begin{AM} 65N12, 65N15, 65N25, 65N30
\end{AM}

\section{Introduction}
\label{sec:introduction}
We consider the Stokes system on a closed $2$-dimensional surface $\gamma \subset \bbR^{3}$; 
extension to other space dimensions is mostly immediate, 
but we consider the physically most relevant case for the sake of concreteness.  
We assume throughout that $\gamma$ is of class $C^4$. 
Given a tangential forcing term $\bef$, 
the Stokes problem is then to find a divergence-free vector field $\bu$ also tangential to $\gamma$ and pressure $p$ such that 
\begin{align}
    \label{strongform}
    -2\Pi \divergence_\gamma {\rm Def}_\gamma \bu + (\nabla_\gamma p)^\top =\bef \hbox{ on } \gamma.
\end{align}
Here $\Pi$ is the projection onto the tangent space, $\nabla_\gamma$ is the tangential scalar gradient, 
and the tensor ${\rm Def}_\gamma \bu$ is the symmetric tangential gradient of the vector field $\bu$.  
We remark that the choices which lead to this particular form of the surface Stokes system are not always immediately clear.  
For example, the operator $-\Pi \divergence_\gamma {\rm Def}_\gamma$ here is the counterpart of the row wise (vector) Laplacian in the standard Euclidean Stokes system, 
and there are several possible counterparts.  We follow \cite{JOR18} in our definitions and refer to that work for a more in-depth discussion.

The weak form of \eqref{strongform} seeks a pair $(\bu, p) \in \HT \times L_{2,\#}(\gamma)$ such that
\begin{align}
    \label{weakform}
    \begin{aligned}
        2\int_\gamma {\rm Def}_\gamma \bu : {\rm Def}_\gamma \bv - \int_\gamma p \, \divergence_\gamma \bv & = \int_\gamma {\bf f} \cdot \bv, ~~\bv \in \HT,
        \\ \int_\gamma \divergence_\gamma \bu \, q& = 0, ~~q \in L_{2,\#}(\gamma).
    \end{aligned}
\end{align}
Here $L_{2,\#}(\gamma)$ is the subspace of $L_2(\gamma)$ with vanishing mean value, 
and $\HT$ consists of vector fields in $H^1(\gamma)^3$ which are tangent to $\gamma$ a.e.  

In this work we confront two challenges that arise when discretizing \eqref{weakform}.  
First, construction of conforming finite element subspaces of $\HT$ is not straightforward, and in fact has not been accomplished to date.  The reason is that finite element vector fields lying in such a discrete space must simultaneously be continuous across element interfaces (due to membership in $H^1(\gamma)^3$) and tangent to the surface $\gamma$.  
Recent works on finite element methods for \eqref{weakform} and similar problems have approached this problem 
by enforcing the tangential constraint only weakly either by penalization \cite{OQRY18} or by a Lagrange multiplier approach \cite{GJOR18} while preserving membership in $H^1$.  
Our approach is complementary in that we enforce the tangent constraint exactly while enforcing $H^1$ continuity weakly via an interior penalty approach; \ARD{cf. \cite{LLS20}}.  We use $H(\divergence_\gamma)$-conforming finite element spaces, and our method is the surface counterpart to well-established methods for the Euclidean Stokes problem \cite{CKS05,CKS07}.  
Our method has the advantage of being divergence conforming, that is, the constraint $\divergence_\gamma \bu = 0$ is enforced exactly.
In this work we focus on the lowest-order Brezzi-Douglas-Marini (BDM) space in order to simplify our error analysis, but our approach could also be used with other $H(\divergence_\gamma)$-conforming spaces.  \ARD{In particular, our basic algorithm applies without substantial modification to higher-order BDM spaces as in \cite{LLS20}.}  

The second challenge which we confront here is possible ill-posedness of \eqref{strongform} and \eqref{weakform} 
in the presence of rigid motions \ARD{and other continuous intrinsic isometries of $\gamma$; cf. \cite{BBSG10} for a discussion of the relationship between Killing fields and surface properties in an application-based setting.  The corresponding vector fields} are known as \emph{Killing fields} and constitute the (possibly empty) subspace $\mathcal{K}$ of ${\rm HT}^1(\gamma)$
whose members are annihilated by the deformation operator $\Deform_\gamma$.  The dimension of $\mathcal{K}$ is at most three, with equality holding only on the sphere.  \ARD{Because $\mathcal{K}$ has positive dimension only when $\gamma$ possesses a certain structure,} we generically have ${\rm dim}(\mathcal{K}) = 0$.
A well-posed version of \eqref{weakform} may be obtained by requiring that $\bu \perp \mathcal{K}$.  

Similarly filtering out the Killing fields in a computational setting may be a much more subtle problem.
In some cases (such as when $\gamma$ is a sphere), it is not difficult to find the dimension of $\mathcal{K}$ and even a basis for it.
It is also easy to see that Killing fields are eigenmodes corresponding to zero eigenvalues of the eigenvalue problem for the system \eqref{weakform}, so in principle one could compute these eigenfunctions and remove these modes from the solution.  
The difficulty with this approach is that even ${\rm dim}(\mathcal{K})$ may be unknown. 
For instance, it may be the case in computational practice that information about $\gamma$ comes in the form of a triangulated (polyhedral) approximating surface $\Gamma$ which may not inherit the symmetries of the underlying surface $\gamma$.  Thus the set of Killing fields on $\Gamma$ may be trivial even if $\mathcal{K}$ is not.  If Killing fields are present, they would thus correspond to {\it small} eigenvalues of the Stokes system on $\Gamma$ rather than zero eigenvalues.  
Discretization also adds a different complication.  As we describe below, conforming approximations to \eqref{weakform} are unknown, and various penalization techniques used to obtain convergent FEM will also perturb the eigenvalues of the Stokes system.  Thus we must somehow distinguish between small eigenvalues of a discrete system that correspond to actual zero continuous eigenvalues and actual Killing fields,
and those that correspond to near-symmetries of $\gamma$ and thus not to genuine Killing fields.   We contrast this situation with the role of harmonic forms in the finite element exterior calculus framework, which similarly can be viewed as zero eigenmodes and must be filtered out to obtain well-posed formulations of Hodge-Laplace problems.  However, harmonic forms are also zero discrete eigenmodes and thus easily identified in a discrete setting \cite{AFW10}.  The dimension of the space of harmonic forms is also a topological invariant and so is stable with respect to small geometric perturbations. 

It appears to be very difficult to firmly identity ${\rm dim}(\mathcal{K})$ computationally absent precise knowledge of the \ARD{continuous intrinsic} symmetries of $\gamma$.    We thus pursue a more modest goal, 
which is to obtain an optimally convergent approximation to $\bu-P_{\mathcal{K}}\bu$ in the $H^1$ and $L_2$ norms.
The main tool we use is the following perturbation of \eqref{weakform}:
Given $\varepsilon >0$, find a pair $(\bu^\varepsilon, p^\varepsilon) \in \HT \times L_{2,\#}(\gamma)$ such that
\begin{align}
    \label{pert_weakform}
    \begin{aligned}
        2\int_\gamma {\rm Def}_\gamma \bu^\varepsilon : {\rm Def}_\gamma \bv + \varepsilon \int_\gamma \bu^\varepsilon \cdot \bv - \int_\gamma p^\varepsilon \, \divergence_\gamma \bv & = \int_\gamma {\bf f} \cdot \bv, ~~\bv \in \HT,
        \\ \int_\gamma \divergence_\gamma \bu^\varepsilon \, q& = 0, ~~q \in L_{2,\#}(\gamma).
    \end{aligned}
\end{align}
We establish below that $\bu^\varepsilon \perp_{L_2} \mathcal{K}$ and that $\bu-\bu^\varepsilon=O(\varepsilon)$ in the $L_2$ and $H^1$ norms.  

Our finite element method is based on the perturbed problem \eqref{pert_weakform}, 
and its convergence properties depend on the choice of $\varepsilon$.  
Roughly speaking, a discrete solution $\bU^\varepsilon$ corresponding to a finite element mesh of size $h$ satisfies 
\begin{align*}
    \vvvert \bu-\bU^\varepsilon \vvvert_{1,h,\varepsilon} \lesssim (h+\varepsilon) \|{\bf f}\|_{L_2(\gamma)},
\end{align*}
where $\vvvert \cdot \vvvert_{1,h,\varepsilon}$ is a discontinuous Galerkin energy norm for the elliptic portion of the problem \eqref{pert_weakform}.  
Because the lowest-order BDM space approximates $\bu$ using piecewise linears, 
this estimate is optimal if $\epsilon \le h$, and in particular if either $\varepsilon=h$ or $\varepsilon = h^2$.
In addition,
\begin{align}
    \label{first_L2}
    \|\bu-\bU^\varepsilon\|_{L_2(\gamma)} \lesssim (h^2+\varepsilon + \frac{h^2}{\varepsilon})\|{\bf f}\|_{L_2(\gamma)},
    \\
    \label{second_L2}
    \|\bu-(\bU^\varepsilon-P_{\mathcal{K}} \bU^\varepsilon)\|_{L_2(\gamma)} \lesssim (\varepsilon + h^2)\|{\bf f}\|_{L_2(\gamma)}.
\end{align}
Choosing $\varepsilon = h$ yields a suboptimal order of convergence in both of these estimates.  
However, \eqref{first_L2} also indicates that obtaining a convergent approximation in $L_2$ does {\it not} require any knowledge about $\mathcal{K}$.  
On the other hand, taking $\varepsilon = h^2$ yields an $O(1)$ error in \eqref{first_L2} if Killing fields are not filtered out,
but an optimal $O(h^2)$ convergence rate in \eqref{second_L2} if they are.
Thus the choice of $\varepsilon$ allows for a tradeoff between optimal convergence rates in $L_2$ and robustness with respect to the presence of Killing fields.  Instead of insisting on filtering out Killing fields, it may also be reasonable to accept a solution that is accurate up to a Killing field.  Estimate \eqref{second_L2} indicates that $\bU^{h^2}$ converges optimally to $\bu$ in this sense.  
 
We also address cases where $\mathcal{K}$ is not known explicitly and thus must be discretely approximated.   
We first establish optimal convergence in $L_2$ of discrete eigenfunctions of the Stokes eigenvalue problem corresponding to \eqref{weakform}.
If ${\rm dim}(\mathcal{K})$ is known, we may substitute the projection $P_{\mathcal{K}_h}$ onto the corrresponding discrete eigenspace for the projection $P_{\mathcal{K}}$ in the estimate \eqref{second_L2} and maintain optimal convergence.  
If ${\rm dim}(\mathcal{K})$ is not known, it is not possible at any given mesh level to determine whether a given space of discrete eigenfunctions corresponds to $\mathcal{K}$. 
In order to overcome this problem, we choose a candidate set of discrete Killing fields as the discrete eigenspace $\mathcal{K}_h$ which minimizes $\|\bU^{h^\alpha}-(\bU^{h^2}-P_{\mathcal{K}_h} \bU^{h^2})\|_{L_2(\gamma)}$ for some $\alpha \in [ 1, 2 )$.  
\ARD{Implementation of this condition requires only checking a simple inequality involving the mesh size and discrete eigenvalues.}  
Below we show that for $h$ sufficiently small this choice yields a space of discrete Killing fields which converges optimally to $\mathcal{K}$,
and thus $\|\bu-(\bU^{h^2}-P_{\mathcal{K}_h} \bU^{h^2})\|_{L_2(\gamma)} \lesssim h^2$.  
On the other hand, for $h$ in the preasymptotic range we use \eqref{first_L2} to find that
$\|\bu-(\bU^{h^2}-P_{\mathcal{K}_h} \bU^{h^2})\|_{L_2(\gamma)} \lesssim h^{2-\alpha}$.  
Thus even if we do not correctly determine the space of discrete Killing fields, 
we nonetheless obtain a reasonable approximation via this algorithm, 
and asymptotically we are guaranteed to correctly filter out the Killing fields.  
We emphasize that this algorithm does not require any a priori knowledge of $\mathcal{K}$, even its dimension.  
It may seem obvious that we should choose $\alpha=1$ in order to maximize the order of convergence in the preasymptotic range.
As we illustrate via numerical experiments below, however, this intuition is not necessarily correct.   \ARD{Finally, we consider a similar analysis when ${\bf f} \not\perp \mathcal{K}$.}  

A number of recent papers have considered numerical methods for the surface Stokes and related vector Laplace-type problems,
including \cite{FR18, GJOR18, JR20, LLS20, NRV17,OQRY18, ORZ20, OY19, RV17}.  We first mention \cite{OQRY18}, 
where a trace surface FEM for the surface Stokes equation is defined and error analysis is given.  There an $H^1$-conforming method is considered and penalization is used to weakly enforce tangential conformity for the velocity space.
The problem \eqref{pert_weakform} is considered to be the continuous problem instead of \eqref{weakform}.
The ``pure'' Stokes problem with $\varepsilon = 0$ is only allowed if ${\rm dim}(\mathcal{K}) = 0$,
and otherwise it is assumed that $\varepsilon>0$ in order to obtain a well-posed problem independent of the presence of Killing fields.   \ARD{The paper \cite{JR20} explores options for weakly enforcing tangengiality by penalty and Lagrange multiplier methods.  A full error analysis is given, including consideration of ``geometric errors'' due to approximation of the surface $\gamma$ in the numerical method.}  In \cite{Re18} the author gives a stream function formulation of the surface Stokes equations that is well-suited to finite element discretization, \ARD{and a corresponding finite element method and error analysis are given in \cite{BRPP}}.  However, this methodology is limited to simply connected surfaces due to the presence of harmonic forms otherwise.  \ARD{Our work and the recent paper \cite{LLS20} have a number of features that are distinct from these previous works.}  These include tangential conformity of the method and exact enforcement of the divergence-free constraint.
In addition, our method places no restrictions on domain geometry,
allowing both for non-simply connected surfaces and robustness in the presence of Killing fields. 
\ARD{The paper \cite{LLS20} also consider tangential- and divergence-conforming FEM for surface Navier-Stokes equations, but focuses on computational and algorithmic aspects.  The lowest-order interior penalty method considered there is essentially the same as our, and higher-order and hybridizable discontinuous Galerkin (HDG) versions are also presented.  A number of illuminating computational experiments are carried out for the stationary and instationary surface (Navier-)Stokes equations, but no analysis is carried out.  In contrast, we give rigorous error analysis, including the first error analysis for a surface Stokes eigenvalue problem and the approximation of Killing fields.}

The rest of the paper is arranged as follows.   In Section~\ref{sec:preliminaries} we discuss analytical preliminaries, and in Section~\ref{sec:todo} we explore the perturbed problem \eqref{pert_weakform}.  In Section~\ref{sec:fem} we define our finite element method and prove basic energy estimates, while in Section~\ref{sec:L2} we prove $L_2$ error estimates.  Section~\ref{sec:killing} contains definitions and discussion of methods for filtering out Killing fields.  In Section~\ref{sec:numerics} we illustrate our results via numerical experiments.  

\section{Preliminaries}
\label{sec:preliminaries}
%
%
%

We use the notation $A \lesssim B$ to denote $ A\leq C B$ for a constant $C$ independent of $A$ and $B$. 
Furthermore, $A\gtrsim B$ indicates $B \lesssim A$ and  we write $A \simeq B$ whenever $A\lesssim B$ and $B \lesssim A$.

\subsection{Implicit surface representation}
\label{sec:surf_rep}

We assume that $\gamma$ is a $C^{4}$ surface, that is, 
the zero level set of a $C^{4}$ function over the ambient space.  \ARD{Our algorithm can be formulated for less regular surfaces, and it the error analysis also may not require that the surface be $C^4$.  This assumption arises because of technical issues related to proving error estimates for surface FEM under optimal assumptions on surface regularity; cf. \cite{bonito2019finite} for a discussion in the context of scalar elliptic problems.  In addition, the Piola transform that we employ below involves multiplication by terms involving derivatives of the transforming function and therefore raises by one differentiability degree the natural assumption on surface regularity.}  We also assume that $\gamma$ is compact and closed. 
Thus $\gamma$ divides the ambient into two sets: the compact \emph{interior domain} and the \emph{exterior domain}. 
There exists an open neighborhood $\mathcal{N}$ of the surface $\gamma$
and a \emph{signed distance function} $d \in C^4(\mathcal{N})$ for which $d < 0$ over the interior domain
and $d > 0$ over the exterior domain and for which $|d(x)|={\rm dist}(x, \gamma)$.

The Hessian $\bH=\Hessian d$ is known as the \emph{Weingarten map}.
Note that $\bH$ has one zero eigenvalue corresponding to the direction $\nabla d$, and for $\bx \in \gamma$ the other eigenvalues $\kappa_i$ ($i=1,2$) of $\bH$ are the principal curvatures of $\gamma$.  These quantities are all defined on a tubular neighborhood of width $1/\|\max_{i} |\kappa_i|\|_{L_\infty(\gamma)}$, and in order to avoid degeneration of constants near the boundary of this neighborhood we assume $\mathcal{N} \subseteq \{\bx \in \mathbb{R}^{3}: |d(\bx)| \le 1/(2 \|\max_{i} |\kappa_i|\|_{L_\infty(\gamma)}) \}$ without loss of generality.

Note that $\nabla d$ is the outward-pointing normal along $\gamma$. 
We define a vector field $\bnu \in C^{3}(\mathcal{N})^{2}$ by setting $\bnu(\bx) = \nabla d(\bx)$ when $\bx \in \gamma$
and taking the extension onto $\mathcal{N}$ that is constant in the normal direction. 
Under those assumptions, the closest point projection 
\begin{equation}\label{e:P}
    \bP : \mathcal{N} \rightarrow \gamma, \quad \bx \mapsto \bx - d( \bx ) \bnu( \bx )
\end{equation}
is uniquely defined. 
Note that $\bnu(\bx)=\bnu(\bP(\bx))$ for $x \in \mathcal{N}$.  
In addition, the matrix field $\Pi = \bI - \bnu \otimes \bnu \in C^{3}(\mathcal{N})^{3 \times 3}$ 
describes the projection onto the tangent plane along $\gamma$.  We refer to \cite{bonito2019finite, Dz88, DD07} for further discussion of these properties.

\subsection{Differential operators and function spaces}  Let $L_{2}(\gamma)$ be the Hilbert space of square-integrable functions over $\gamma$
and denote by $L_{2,\sharp}(\gamma)$ the subspace of $L_{2}(\gamma)$ whose members have vanishing average value.
We let $H^{l}(\gamma)$ denote the Sobolev space of order $l$ over $\gamma$
and write $\| .\|_{H^l(\gamma)}$ for the corresponding norm. 
For sufficiently smooth scalar functions $u:\gamma \rightarrow \mathbb{R}$,
we denote by $\nabla_\gamma u$ the tangential gradient of $u$ on $\gamma$.
If $u$ is defined in a neighborhood of $\gamma$, then we may also write $\nabla_\gamma u=  \nabla u \Pi$.
Here we follow the convention that $\nabla_\gamma u$ is a row vector.  


For a surface $\gamma$ we let $\HT$ denote the closed subspace of $H^1(\gamma)^{3}$ 
whose members are tangent to $\gamma$ almost everywhere.  
We adopt the convention that members of $\HT$ are column vectors.  
In addition, we set $\HT[2] := \HT \cap H^2(\gamma)^{3}$ equipped with the $H^2(\gamma)^{3}$ norm.
The tangential total derivative of a vector function is given by 
\begin{align*}
  \nabla_\gamma : H^1(\gamma)^{3} \rightarrow L^{2}(\gamma)^{3 \times 3}, 
  \quad \bv \mapsto \Pi \nabla \bv \Pi.
\end{align*}
The deformation of a tangential vector field is defined as the symmetric part of the tangential total derivative, 
\begin{align*}
 \Deform_{\gamma} :  H^1(\gamma)^{3} \rightarrow L^{2}(\gamma)^{3 \times 3}, 
 \quad \bv \mapsto \frac 1 2 ( \nabla_\gamma \bv + \nabla_\gamma^T \bv ).  
\end{align*}
The tangential divergence of a tangential vector field is given by 
\begin{align*}
  \divergence_{\gamma} : \HT \rightarrow L^{2}(\gamma),
  \quad \bv \mapsto {\rm tr} \nabla_\gamma \bv,
\end{align*}
while the rowwise divergence of a matrix field is 
\begin{align*}
 \divergence_{\gamma} : H^{1}(\gamma)^{2 \times 2} \rightarrow L^{2}(\gamma)^{3},
 \quad 
 \bA \mapsto \big (\divergence_\gamma ({\bf e}_1^T \bA),\dots, \divergence_\gamma ({\bf e}_{3}^T \bA) \big)^T.
\end{align*}
%
%
Members of the kernel of the deformation operator, 
\begin{align*}
 \mathcal K := \left\lbrace \bv \in \HT \ : \ \Deform_\gamma \bv = 0 \right\rbrace
\end{align*}
are known as the \emph{Killing fields} of $\gamma$. 
Their $L_2$-orthogonal complement is 
\begin{align*}
 \mathcal K^{\perp}
 :=
 \left\lbrace
    \bg \in L_2(\gamma)^2
    \middle| 
    \ \bg \cdot \bnu = 0 \ a.e. \textrm{ in }\gamma \textrm{ and } \forall \bh \in \mathcal K \ : \int_\gamma  \bg \cdot \bh = 0 
    \right\rbrace .
\end{align*}
Note that the Killing fields are divergence-free 
because the divergence is the trace of the tangential total derivative,
which coincides with the trace of the deformation operator.
Also, because Killing fields correspond to rigid rotations of $\gamma$,
we have ${\rm dim}(\mathcal{K}) \le 3$ with equality holding only if $\gamma$ is a sphere.
Generically for the class of $C^2$ surfaces there holds ${\rm dim}(\mathcal{K}) = 0$.

\subsection{The surface Stokes system}

Given $\bef \in \mathcal K^\perp$, the form of the Stokes system that we consider seeks $\bu \in \HT \cap \mathcal K^\perp$ and $p \in L_{2,\#}(\gamma)$ related by \eqref{weakform}.
We thus define the bilinear form and linear functional 
\begin{align*}
    \forall \bu, \bv \in \HT : 
    \quad 
    a(\bu,\bv)
    &:=
    2 \int_\gamma \Deform_\gamma \bu : \Deform_\gamma \bv
    \\ 
    \forall \bv \in L_2(\gamma)^3: 
    \quad 
    L(\bv)
    &:=
    \int_\gamma \bef \cdot \bv.
\end{align*}
An alternative weak formulation removes the pressure variable 
and incorporates the divergence-free constraint strongly into the velocity space. 
We define 
\begin{align*}
    \VT 
    := 
    \left\lbrace 
        \bv \in \HT 
        \ \middle| \
        \forall q \in L_{2,\#}(\gamma) 
        \ : \ 
        \int_\gamma \textrm{div}_{\gamma} \bv ~ q = 0 
    \right\rbrace
    .
\end{align*}
Thanks to the following Korn-type inequality  \cite{JOR18}
\begin{equation}\label{e:Korn}
    \| \bv \|_{H^1(\gamma)} 
    \lesssim
    \| \Deform_{\gamma} \bv \|_{H^1(\gamma)} + \|P_{\mathcal{K}}\bv\|_{L_2(\gamma)}, 
    \qquad 
    \forall \bv \in {\bf HT}^1,
\end{equation}
where $P_{\mathcal{K}}$ is the $L_2$ projection onto $\mathcal{K}$, the Lax-Milgram theory guarantees that  the velocity $\bu \in \VT \cap \mathcal K^\perp$ is determined uniquely by the relationship
\begin{equation}\label{e:velocity_div_free}
    a(\bu,\bv)
    =
    L(\bv), 
    \qquad 
    \forall \bv \in \VT \cap \mathcal K^\perp. 
\end{equation}
The velocity field $\bu$ is smoother that just in $\HT \subset H^1(\gamma)^3$.
In fact, we recall the elliptic regularity property:  Given $\bg \in \mathcal K^\perp$, the weak solution $\bw \in \VT \cap \mathcal K^{\perp}$ defined by
\begin{equation}
    a(\bw,\bv) = \int_{\gamma} \bg \cdot \bv,
    \qquad 
    \forall \bv \in \VT
\end{equation}
satisfies $\bw \in \HT[2]$ and 
\begin{equation}\label{e:reg_generic}
    \| \bw \|_{\HT[2]} \lesssim \| \bg \|_{L_2(\gamma)}.
\end{equation}
This property holds for any $C^3$ surface $\gamma$ and guarantees that $\bu \in  \HT[2] \subset H^2(\gamma)^3$ with
\begin{equation}\label{e:reg_u}
    \| \bu \|_{H^2(\gamma)} \lesssim \| \bef \|_{L_2(\gamma)}.
\end{equation}
As a simple consequence we get 
\begin{equation}\label{e:killing_reg}
    \mathcal K \subset {\bf HT}^2(\gamma).
\end{equation}
%
%
%
%
%
To recover the pressure, we recall $\bef \in \mathcal K^\perp$ and the inf-sup property \cite{JOR18}
\begin{equation}
    \inf_{q \in L_{2,\#}(\gamma)} 
    \sup_{ \bv \in \HT \cap \mathcal K^{\perp}}
    \frac{\int_\gamma q ~\textrm{div}_\gamma(\bv)}{\| \bv \|_{H^1(\gamma)} \|q\|_{L_2(\gamma)} }
    > 0, 
\end{equation}
which guarantees that the pressure is uniquely determined from the relation
\begin{gather*}
    \forall \bv \in \HT \cap \mathcal K^\perp: 
    \quad
    - 
    \int_\gamma p ~\textrm{div}_\gamma \bv
    = 
    \int_\gamma  \bef \cdot \bv
    -
    \bf 2 \int_\gamma \Deform_\gamma \bu : \Deform_\gamma \bv 
    .
\end{gather*}


\section{A perturbed problem}
\label{sec:todo}  Killing fields are only present on surfaces f symmetry,
so it may be difficult to ascertain the dimension of $\mathcal{K}$ numerically and thus correctly enforce the condition $u \perp \mathcal{K}$.
In order to robustly filter out Killing fields we propose to consider the \emph{perturbed} problem 
of finding $\bu^\varepsilon \in \VT$ such that
\begin{equation}\label{e:valocity_divergence_eps}
    a_\varepsilon(\bu^\varepsilon,\bv) = L(\bv), \qquad \forall \bv \in \VT, 
\end{equation}
where $\varepsilon >0$ and 
\begin{align*}
    a_\varepsilon(\bw,\bv):= a(\bw,\bv) + \varepsilon   \int_\gamma \bw \cdot \bv.
\end{align*}
The coercivity constant associated with the perturbed problem \eqref{e:valocity_divergence_eps} is bounded from below by the coercivity constant of the unperturbed problem. From this, one directly deduces the wellposedness of the perturbed problem and in particular that \eqref{e:valocity_divergence_eps}
has a unique solution.

For $\bv \in \mathcal K$,
we have already observed that $\textrm{tr}(\Deform_\gamma \bv) = \textrm{div}_\gamma \bv = 0$
and because $\bef \in \mathcal K^{\perp}$, 
testing \eqref{e:valocity_divergence_eps} with $\bv \in \mathcal{K}$ readily yields that
\begin{equation}\label{e:ueps_orth}
    \bu^\varepsilon \in \VT \cap \mathcal K^{\perp}.
\end{equation}
This property is critical to derive the following consistency estimate.
\begin{lemma}[Consistency]
Given $f \in \mathcal K^\perp$, let $\bu \in \HT \cap \mathcal K^\perp$ satisfy \eqref{e:velocity_div_free}
and for $\varepsilon >0$, let $\bu^\varepsilon \in \HT$ satisfy \eqref{e:valocity_divergence_eps}.
Then
\begin{align}
    \label{eq:consistency}
    \|\Deform_\gamma (\bu-\bu^\varepsilon)\|_{L_2(\gamma)} +  \|\bu-\bu^\varepsilon \|_{L_2(\gamma)}
    \lesssim   \varepsilon \| \bef \|_{L_2(\gamma)}.
 \end{align}
\end{lemma}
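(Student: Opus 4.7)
The plan is to use a standard Galerkin-style orthogonality argument combined with the Korn inequality \eqref{e:Korn} and the observation \eqref{e:ueps_orth} that both $\bu$ and $\bu^\varepsilon$ lie in $\mathcal{K}^\perp$, so the error $\be := \bu - \bu^\varepsilon$ lies in $\VT \cap \mathcal{K}^\perp$.

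First I would derive an error equation. Since $\bef \in \mathcal{K}^\perp$ and $\Deform_\gamma \bh = 0$ for $\bh \in \mathcal{K}$, the unperturbed problem \eqref{e:velocity_div_free} actually holds for every test function $\bv \in \VT$ (not only $\bv \in \VT \cap \mathcal{K}^\perp$), because both sides vanish on $\mathcal{K}$. Similarly the perturbed equation \eqref{e:valocity_divergence_eps} already holds on all of $\VT$ by hypothesis, and both sides vanish on $\mathcal{K}$ thanks to \eqref{e:ueps_orth}. Subtracting these two identities yields
\begin{equation*}
    a(\be,\bv) = \varepsilon \int_\gamma \bu^\varepsilon \cdot \bv, \qquad \forall \bv \in \VT.
\end{equation*}

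Next I would bound $\bu^\varepsilon$ independently of $\varepsilon$. Testing \eqref{e:valocity_divergence_eps} with $\bv = \bu^\varepsilon$, using Cauchy--Schwarz on the right, and applying \eqref{e:Korn} together with $\bu^\varepsilon \in \mathcal{K}^\perp$ yields
\begin{equation*}
    \| \bu^\varepsilon \|_{H^1(\gamma)} \lesssim \| \Deform_\gamma \bu^\varepsilon \|_{L_2(\gamma)} \lesssim \| \bef \|_{L_2(\gamma)},
\end{equation*}
and in particular $\| \bu^\varepsilon \|_{L_2(\gamma)} \lesssim \| \bef \|_{L_2(\gamma)}$ with a constant independent of $\varepsilon$.

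Finally, I would close the estimate by testing the error equation with $\bv = \be \in \VT \cap \mathcal{K}^\perp$. This gives
\begin{equation*}
    2\| \Deform_\gamma \be \|_{L_2(\gamma)}^2 = a(\be,\be) = \varepsilon \int_\gamma \bu^\varepsilon \cdot \be \le \varepsilon \| \bu^\varepsilon \|_{L_2(\gamma)} \| \be \|_{L_2(\gamma)}.
\end{equation*}
Another application of \eqref{e:Korn}, using $\be \in \mathcal{K}^\perp$, yields $\| \be \|_{L_2(\gamma)} \le \| \be \|_{H^1(\gamma)} \lesssim \| \Deform_\gamma \be \|_{L_2(\gamma)}$, which combined with the previous display and the bound on $\| \bu^\varepsilon \|_{L_2(\gamma)}$ gives $\| \Deform_\gamma \be \|_{L_2(\gamma)} \lesssim \varepsilon \| \bef \|_{L_2(\gamma)}$ and then $\| \be \|_{L_2(\gamma)} \lesssim \varepsilon \| \bef \|_{L_2(\gamma)}$, which is \eqref{eq:consistency}.

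There is no real obstacle here; the only delicate point is justifying that the test space may be enlarged to all of $\VT$ when subtracting the two weak formulations, which relies crucially on $\bef \perp \mathcal{K}$ and on \eqref{e:ueps_orth}. Without \eqref{e:ueps_orth}, the error $\be$ would generally not lie in $\mathcal{K}^\perp$ and the Korn inequality could not be invoked to coercively control the $H^1$ norm by $a(\be,\be)$.
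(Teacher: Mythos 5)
Your proof is correct and follows essentially the same argument as the paper: subtract the two weak formulations, test the resulting error equation with $\be = \bu - \bu^\varepsilon$, invoke the Korn inequality \eqref{e:Korn} (twice) using $\bu^\varepsilon, \be \in \mathcal{K}^\perp$, and close with the $\varepsilon$-independent energy bound $\|\Deform_\gamma \bu^\varepsilon\|_{L_2(\gamma)} \lesssim \|\bef\|_{L_2(\gamma)}$. The only difference is cosmetic — you are somewhat more explicit than the paper about why the unperturbed equation \eqref{e:velocity_div_free} extends to all test functions in $\VT$ (both sides vanish on $\mathcal{K}$), which is a helpful clarification.
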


\begin{proof}
 Due to the property \eqref{e:ueps_orth} and the Korn inequality \eqref{e:Korn}, there holds
 \begin{align*}
  \| \bu - \bu^\varepsilon \|_{L_2(\gamma)} 
  \lesssim
  \| \Deform_{\gamma} (\bu - \bu^\varepsilon) \|_{L_2(\gamma)}, 
 \end{align*}
 and so it suffices to prove
 \begin{align*}
  \| \Deform_{\gamma} (\bu - \bu^\varepsilon) \|_{L_2(\gamma)}  \lesssim \varepsilon \| \bef \|_{L_2(\gamma)}.
 \end{align*}
 To see the latter, we subtract the two weak formulations to write
 \begin{align*}
  a(\bu - \bu^\varepsilon, \bv) - \varepsilon \int_\gamma \bu^\varepsilon \cdot \bv = 0.
 \end{align*}
 Using again that $\bu^\varepsilon \in \VT \cap \mathcal K^{\perp}$ and employing the Korn inequality \eqref{e:Korn},
 we deduce that
 \begin{align*}
 \ARD{2} \| \Deform_{\gamma} (\bu - \bu^\varepsilon) \|_{L_2(\gamma)}^2
  &= 
  \varepsilon
  \int_\gamma  \bu^\varepsilon \cdot ( \bu - \bu^\varepsilon ) 
  \\&\leq
  \varepsilon \| \bu^\varepsilon \|_{L_2(\gamma)} \| \bu - \bu^\varepsilon \|_{L_2(\gamma)} 
  \\
  &\lesssim
  \varepsilon \| \Deform_\gamma \bu^\varepsilon \|_{L_2(\gamma)} \| \Deform_\gamma(\bu - \bu^\varepsilon) \|_{L_2(\gamma)}.
 \end{align*}
 The desired result follows from the energy estimate 
 \begin{align*}
  \| \Deform_\gamma \bu^\varepsilon \|_{L_2(\gamma)} \lesssim \| \bef \|_{L_2(\gamma)}.
 \end{align*}
 Here we use that the coercivity constant of $a_{\varepsilon}(\cdot,\cdot)$
 over the space $\VT \cap \mathcal K^{\perp}$
 is bounded from below by the coercivity constant 
 of $a(\cdot,\cdot)$ over that space. 
\end{proof}

We proceed by noting that as for the unperturbed problem, 
the elliptic regularity property \eqref{e:reg_generic} guarantees that $\bu^\varepsilon \in  \HT[2] \subset H^2(\gamma)^3$ with 
\begin{equation}\label{e:reg_u_eps}
\| \bu^\varepsilon\|_{H^2(\gamma)} \lesssim \| \bef \|_{L_2(\gamma)};
\end{equation}
compare with \eqref{e:reg_u}.
It is worth mentioning that the constant hidden in `$\lesssim$' is independent of $\varepsilon$.
Indeed,
observe that for $\bef \in \mathcal K^\perp$, the solution $\bz^\varepsilon \in \VT \cap \mathcal K^\perp$ defined by
\begin{gather*}
    a_\varepsilon (\bz^{\varepsilon},\bv) = \int_\gamma \bef \cdot \bv, \qquad \forall \bv \in \VT,
\end{gather*}
satisfies
\begin{gather*}
    a (\bz^{\varepsilon},\bv) = \int_\gamma \bef \cdot \bv - \varepsilon \int_\gamma \bz^\varepsilon \bv, \qquad \forall \bv \in \VT.
\end{gather*}
Hence, the elliptic regularity property \eqref{e:reg_generic} guarantees that $\bz^\varepsilon \in {\bf HT}^2(\gamma)$ and, along with a Korn inequality and the energy estimate $\| \bz^\varepsilon \|_{H^1(\gamma)} \lesssim \| \bef \|_{L_2(\gamma)}$, we have
\begin{equation}\label{e:elliptic_eps_generic}
    \| \bz^{\varepsilon}\|_{H^2(\gamma)}
    \lesssim \| \bef \|_{L_2(\gamma)} + \varepsilon \| \bz^\varepsilon \|_{H^1(\gamma)} 
    \lesssim \| \bef \|_{L_2(\gamma)}.
\end{equation}
Below we analyze numerical methods using $\varepsilon = h^\alpha$ with various choices of $1 \le \alpha \le 2$
in order to account for Killing fields in the approximation of $\bu$.
Adding an $L_2$ inner product to the bilinear form also helps to achieve a stable interior penalty finite element formulation.

\section{Finite Element Approximations}
\label{sec:fem}
In this section we define a divergence-conforming interior penalty finite element method and prove stability results and basic error estimates for it.

\subsection{Discrete surface}
\label{subsec:discrete_surf}
We assume the existence of a polyhedral surface $\Gamma \subset \mathcal{N}$ embedded in $\mathbb{R}^{3}$ such that all faces of $\Gamma$ are triangular (non-degenerate).  For simplicity we moreover assume that the vertices of $\Gamma$ lie on $\gamma$.
Intuitively, $\Gamma$ is a sufficiently refined interpolation of $\gamma$.  

We denote by $\bMT$ the set of faces of $\Gamma$ and the associated triangulation shape-regularity constant by
\begin{equation}\label{d:shape-reg}
    \sigma_1 := \max_{\barT \in \bMT} \frac{\textrm{diam}(\barT)}{h_{\barT}}, \qquad \textrm{with }h_{\barT} := |\barT|^{1/2}. 
\end{equation}
Further we define the quasi-uniformity constant
\begin{equation}\label{d:quasi-unif}
    \sigma_2 := \max_{\barT\in \bMT} h_{\barT}/ \min_{\barT\in \bMT} h_{\barT}
\end{equation}
and the valence constant
\begin{equation}\label{d:valence}
    M:=\sup_{\bz \in \mathcal V} V(\bz),
\end{equation}
where $\mathcal V$ stands for the set of vertices of $\Gamma$ and $V(\bz)$ the valence of $\bz$. 
The constants appearing in the discussion below might depend on $\sigma_1$, $\sigma_2$, and $M$
but not on the maximal diameter $h:=\max_{\barT\in \bMT} h_{\barT}$.   \ARD{We also let $\MT=\{\bP(\overline{T}):\overline{T} \in \bMT\}$. }  

In addition, we denote by $\bnu_\Gamma : \Gamma \rightarrow \bbR^{3}$ the outward-pointing unit normal to $\Gamma$, 
which is piecewise constant and defined almost everywhere.  
We recall that the normal to $\gamma$ is extended to $\mathcal N$ by the relation $\bnu = \nabla d$
and we assume throughout that $\Gamma$ is transverse to $\gamma$ in the sense that
\begin{align} \label{ass:transverse} \bnu \cdot \bnu_\Gamma \ge c>0, \quad a.e. \quad \textrm{in } \Gamma. \end{align}
Under this geometric non-degeneracy assumption, one can relate the infinitesimal area of the two surfaces \cite{DD07}
: for $v \in L_1(\gamma)$ we have $\int_\Gamma ( v \circ \bP ) \mu = \int_\gamma v$, where for a.e. $\bx \in \Gamma$ 
\begin{align}
    \label{eq:mu_def}
    \mu(\bx):=\bnu(\bx) \cdot \bnu_\Gamma(\bx) \Pi_{i=1}^2 (1-d(\bx) \kappa_i(\bx)),
\end{align}
and $\bP^{-1}$ is the inverse of $\bP:\Gamma \rightarrow \gamma$.  
The definition of $\mathcal{N}$ and the assumption \eqref{ass:transverse} then ensure that $\mu \simeq 1$ on $\Gamma$, which implies the norm equivalency result
\begin{align}
    \label{eq:basic_l2equiv}
    \|u \circ \bP\|_{L_2(\Gamma)} \simeq \|u \|_{L_2(\gamma)}, ~~u \in L_2(\gamma),
\end{align} 
where $\bP:\mathcal N \rightarrow \gamma$ is defined in \eqref{e:P}.


\subsection{Piola transforms}
Below we approximate the solution $\bu$ to the surface Stokes problem via a finite element space 
\begin{gather*}
    {\bf XT}(\MT) \subset H(\divergence;\gamma)
    :=
    \{\bu \in L_2(\gamma)^3 : \bu \cdot \bnu = 0 \ a.e. \textrm{ in }\gamma,\  \divergence_\gamma \bu \in L_2(\gamma)\}.
\end{gather*}
The canonical transformation  for $H(\divergence;.)$ spaces between two surfaces is the Piola transform.
We refer to \cite{steinmann2008boundary,CD16} for its properties in the context of surfaces, which we briefly describe now.  Before doing so, we anticipate that it will be used to map $H(\divergence;\widehat T) \to H(\divergence;\barT) \to H(\divergence;\bP(\barT))$ to define the finite element method on $\gamma$. Here $\widehat T$ is the two dimensional reference simplex and $\barT \in \bMT$.

\ARD{Given sufficiently smooth (open or closed) surfaces $\mathcal{S}_0$ and $\mathcal{S}_1$,} let $\Phi : \mathcal{S}_0 \rightarrow \mathcal{S}_1$ be a diffeomorphism and let $\Phi^{-1}$ be its inverse mapping.  
Let also $D\Phi$ and $D\Phi^{-1}$ be the corresponding tangent maps, that is, 
$D\Phi: \mathbb{T}_0 \rightarrow \mathbb{T}_1$ and $D\Phi^{-1}: \mathbb{T}_1 \rightarrow \mathbb{T}_0$, 
where $\mathbb{T}_i$ is the tangent space of $\mathcal{S}_i$.  
Finally, let $\mu$ formally satisfy $\mu d \sigma_0 = d \sigma_1$, where $d \sigma_i$ is surface measure on $\mathcal{S}_i$. 
For $\bq_0 \in H(\divergence;\mathcal S_0)$, the surface Piola transform  $\mathcal P_\Phi \bq_0$ is given by
\begin{gather*}
    \piola_{\Phi} \bq_0:= \mu^{-1} D\Phi (\bq_0), ~\quad  \bq_0 \in H(\divergence; \mathcal{S}_0).
\end{gather*}
Note that $\mathcal P_{\Phi^{-1}} \bq_1$ for $\bq_1 \in  H(\divergence; \mathcal{S}_1)$ is defined similarly and satisfies
\begin{gather*}
    \piola_{\Phi^{-1}} \bq_1 = \mu D\Phi^{-1} (\bq_1), ~\quad  \bq_1 \in H(\divergence; \mathcal{S}_1).
\end{gather*}
The identity 
\begin{alignat}{1}
    \label{piola_div}
    \divergence_{\mathcal{S}_0} \bq_0  = \mu\, \divergence_{\mathcal{S}_1} \bq_1,
\end{alignat}
is valid for $\bq_0 \in H(\divergence,S_0)$ and $\ARD{\mathcal{P}_\Phi \bq_0=}\bq_1  \in H(\divergence,S_1)$ \cite{steinmann2008boundary}. 
Thus $\mathcal P_\Phi: H(\divergence;\mathcal S_0) \rightarrow H(\divergence;\mathcal S_1)$ 
and $\mathcal P_{\Phi^{-1}}: H(\divergence;\mathcal S_1) \rightarrow H(\divergence;\mathcal S_0)$ 
are bounded mappings.

Let us now specialize these relationships to the case $\mathcal S_0= \barT \in \bMT$, $\mathcal S_1= T:= \bP(\barT) \subset \gamma$ and $\Phi=\bP$.
In this context, $D\bP(\bq_0) = \nabla \bP \bq_0 = (\Pi- d \bH)\bq_0$, $\mu$ is given by \eqref{eq:mu_def} 
and  $\Phi^{-1}$ is the inverse of $\bP$ viewed as a mapping $\Gamma \rightarrow \gamma$.
One can check that $D\Phi^{-1}(\bq_1) = [\bI - \frac{\bnu \otimes \bnu_\Gamma}{\bnu \cdot \bnu_\Gamma} ] [\bI - d \bH]^{-1}\bq_1$
(see for instance Lemma~20 in \cite{bonito2019finite}).  
Hence, given  $\barq \in H(\divergence; \Gamma)$ tangential to $\Gamma$, 
we define $\bq \in H(\divergence; \gamma)$ tangential to $\gamma$ by $\bq := \piola_\bP \barq$ 
where for $\barx \in \Gamma$ and $\bx = \bP(\barx)$
\begin{align}
    \label{eq10}
    \bq(\bx) = \mu(\barx)^{-1} [\Pi(\barx)-d(\barx)\bH(\barx)] \barq(\barx).
\end{align}
Similarly, we define $\barq  := \piola_{\bP^{-1}} \bq$ by
\begin{align}
    \label{eq9}
    \barq (\barx) 
    = 
    \mu(\barx)
    \left [\bI - \frac{\bnu(\barx) \otimes \bnu_\Gamma(\barx)}{\bnu(\barx) \cdot \bnu_\Gamma(\barx)} \right ]
    [\bI - d(\barx) \bH(\barx)]^{-1} \bq (\bx).
\end{align}


The next lemma relates norms of vector fields and their Piola transforms between $\Gamma$ and $\gamma$.  

\begin{lemma}
    For $\barT \in \bMT$, set  $T:=\bP(\barT)$.
    Assume that  $\bq \in \HTloc{T}$ and $\barq \in \HTloc{\barT}$ are related by \eqref{eq10} and \eqref{eq9}.  Then
    \begin{align}
        \label{L2_equiv}
        \|\bq\|_{L_2(T)} \simeq \|\barq \|_{L_2(\barT)}, ~~~~\|\bq\|_{H^1(T)} \simeq \|\barq\|_{H^1(\barT)}.
    \end{align}
    If in addition each component of $\barq$ is affine, then
    \begin{align}
        \label{H2_bound}
        \|D_\gamma \nabla_\gamma \bq\|_{L_2(T)} \lesssim \|\bq\|_{H^1(T)},
    \end{align}
    where by $D_\gamma$  we denote the scalar tangential gradient on $\gamma$ acting componentwise.
\end{lemma}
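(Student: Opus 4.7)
The plan is to exploit the smoothness of the geometric factors $d$, $\bH$, $\Pi$, and $\mu$ appearing in the Piola formulas \eqref{eq10} and \eqref{eq9} and to carefully separate constants depending on the $C^4$ surface regularity and on the mesh shape-regularity from any possible $h$-dependence.

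First, for the $L_2$ equivalence, I would apply the area change-of-variables identity $\int_T |\bq|^2 = \int_{\barT} |\bq \circ \bP|^2 \mu$. Because $\mu \simeq 1$ by \eqref{ass:transverse} and the definition of $\mathcal N$, this reduces the claim to the pointwise estimate $|(\Pi(\barx)-d(\barx)\bH(\barx))\barq(\barx)|\simeq |\barq(\barx)|$ for $\barx\in\barT$. The upper bound is immediate from boundedness of $\Pi$ and $d\bH$; for the lower bound, the key point is that the restriction of $\Pi$ to the tangent plane of $\Gamma$ at $\barx$ is invertible onto the tangent plane of $\gamma$ at $\bP(\barx)$, with norm of inverse controlled by $(\bnu\cdot\bnu_\Gamma)^{-1}\le 1/c$ thanks to \eqref{ass:transverse}. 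Since $|d|$ is uniformly small on $\mathcal N$, the perturbation $-d\bH$ can then be absorbed.

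Next, for the $H^1$ equivalence, I would write $\bq\circ\bP(\barx)=M(\barx)\barq(\barx)$ with $M:=\mu^{-1}(\Pi-d\bH)$ and differentiate. The chain rule yields $\nabla_\gamma \bq$ as a sum of two terms, one containing the smooth matrix $M$ composed with $\nabla_\Gamma \barq$ pulled back through $D\bP^{-1}$, and one containing $(\nabla_\gamma M)\barq$. Both $M$ and $\nabla_\gamma M$ are bounded uniformly on $\mathcal N$ because $d\in C^4$ and $\bnu,\Pi\in C^3$, so combining with Step~1 gives $\|\bq\|_{H^1(T)}\lesssim \|\barq\|_{H^1(\barT)}$. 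The reverse inequality follows by the same argument applied to the inverse Piola formula \eqref{eq9}, using again that $\bnu\cdot\bnu_\Gamma\ge c$ to control $D\bP^{-1}$.

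Finally, for the second-derivative bound when each component of $\barq$ is affine on $\barT$, I would apply $D_\gamma \nabla_\gamma$ to the identity $\bq\circ\bP=M\barq$. Since the (Euclidean) second derivatives of $\barq$ vanish identically on $\barT$, only the first two tangential derivatives of the $C^2$ factor $M$ contribute, acting on $\barq$ itself and on its (constant) Euclidean gradient. Because $M$ is bounded in $C^2$ by quantities independent of $h$, this produces the pointwise bound $|D_\gamma \nabla_\gamma \bq|(\bx)\lesssim |\barq(\barx)|+|\nabla_\Gamma \barq(\barx)|$. Integrating over $T$, using $\mu\simeq 1$, and invoking the $H^1$ equivalence already established delivers $\|D_\gamma\nabla_\gamma \bq\|_{L_2(T)}\lesssim \|\barq\|_{H^1(\barT)}\simeq \|\bq\|_{H^1(T)}$.

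The main obstacle will be the bookkeeping in the last step: one has to track the difference between tangential differentiation on $\gamma$ and on $\Gamma$, and verify that every constant generated depends only on $C^3$ bounds for the signed distance function and on the shape-regularity constant $\sigma_1$, never on $h$ or on the particular element $\barT$. In the first two steps the only real subtlety is that $\barq$ is tangent to $\Gamma$ but not to $\gamma$, so the nondegeneracy needed to invert the Piola map on each tangent plane must be extracted from the transversality assumption \eqref{ass:transverse}.
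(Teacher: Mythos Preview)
Your proposal is correct and follows essentially the same route as the paper: bound the Piola matrices and their derivatives using the $C^4$ regularity of $d$ and the transversality condition, apply the product/chain rule, and exploit that $\barq$ is affine to kill second-derivative terms. The paper's proof differs mainly in the bookkeeping you flag as the obstacle: rather than working with $\bq\circ\bP=M\barq$ on $\barT$ and then pulling back, it writes $\bq=\bL(\barq\circ\bP^{-1})$ directly on $T$ and uses two explicit algebraic identities---$\bM=\bM\Pi$ (so that the full gradient $\nabla\bq$ already carries the tangential projection) and the formula $\nabla(\barq\circ\bP^{-1})=(\nabla_\Gamma\barq)\circ\bP^{-1}\,[\bI-\tfrac{\bnu\otimes\bnu_\Gamma}{\bnu\cdot\bnu_\Gamma}]$---to pass cleanly between $\nabla_\gamma$ and $\nabla_\Gamma$. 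For \eqref{H2_bound} the paper makes explicit the point that $\Pi_\Gamma$ is \emph{elementwise constant}, so $\nabla_\Gamma\barq=\Pi_\Gamma(\nabla\barq)\Pi_\Gamma$ is constant on $\barT$ and hence $D_\gamma\big((\nabla_\Gamma\barq)\circ\bP^{-1}\big)=0$; this is the precise mechanism behind your statement that ``only the first two tangential derivatives of $M$ contribute,'' and you should state it when you carry out the details.
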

\begin{proof}
      The first relationship in \eqref{L2_equiv} follows upon noting that the matrices multiplying $\barq$ and $\bq$ in \eqref{eq10} and \eqref{eq9} are bounded due to the assumptions in Section~\ref{sec:surf_rep} and then applying the norm equivalency result \eqref{eq:basic_l2equiv}.
    In particular, the definition of $\mathcal N$ and the assumption $\bnu \cdot \bnu_\Gamma \ge c >0$ 
    imply that  the eigenvalues of $\bI-d\bH$ lie in the interval $(1/2, 3/2)$ for $\bx \in \mathcal{N}$  and that $\mu \simeq 1$.

    In order to prove the $H^1$ estimate, we define
    \begin{align*}
        \bM:=\mu \left [ \bI-\frac{\bnu\otimes \bnu_\Gamma}{\bnu \cdot \bnu_\Gamma} \right] [\bI-d\bH]^{-1}.
    \end{align*}
    Suppressing the dependence on $\barx \in \barT \subset \Gamma$, we  have using the product and chain rules that
    \begin{gather*}
        |\nabla_\Gamma \barq|
        =
        |\Pi_\Gamma \nabla \barq \Pi_\Gamma| 
        \le
        |\nabla \barq|
        =
        |\nabla(\bM \bq \circ \bP)| 
        \le
        |\nabla \bM||\bq| + |\bM \nabla \bq \nabla \bP|,
    \end{gather*}
    where $\Pi_\Gamma:= \bI - \bnu_\Gamma \otimes \bnu_\Gamma$.
    Here and in what follows we implicitly use the canonical extension of $\barq$ to a neighborhood of $\Gamma$, 
    that is, we extend each component of $\barq$ so that it is constant in the direction of the normal $\bnu$ to $\gamma$.  

    The geometric relations provided in Section~\ref{sec:surf_rep} and elementary calculations yield
    $| \nabla \bM|\lesssim \|d\|_{C^3(\mathcal{N})}$ and $| \nabla \bP| = |\Pi-d\bH|= |\Pi(\bI-d\bH)| \lesssim  \|d\|_{C^2(\mathcal{N})}$.  
    Furthermore, because $\bH \bnu = 0$, we have $[\bI-d\bH]^{-1} \bnu=\bnu$, and we also easily compute that $\left [ \bI-\frac{\bnu\otimes \bnu_\Gamma}{\bnu \cdot \bnu_\Gamma} \right] \Pi = \left [ \bI-\frac{\bnu\otimes \bnu_\Gamma}{\bnu \cdot \bnu_\Gamma} \right]$.    Thus $\bM=\bM\Pi $, and
    \begin{gather*}
        |\nabla_\Gamma \barq| \lesssim |\bq|+|\bM \Pi \nabla \bq \Pi [\bI-d\bH]| \lesssim |\bq|+|\nabla_\gamma \bq|.
    \end{gather*}
    Combining this inequality with the already established equivalence of $L_2$ norms yields that for $\barT \in \bMT$,
    \begin{gather*}
        \|\barq\|_{H^1(\barT)}  \lesssim \|\bq\|_{H^1(T)}.
    \end{gather*}

    For the converse inequality, define $\bL:=\frac{1}{\mu} [\Pi-d\bH]$ so that $\bq \circ \bP =\bL \barq$.  First note that because $\barq$ is assumed to be tangential to $\Gamma$, $\barq =\Pi_\Gamma \barq$.  Moreover, $\Pi_\Gamma= \bI - \bnu_\Gamma \otimes \bnu_\Gamma$ is constant elementwise.  Thus on $\Gamma$ we have $\nabla \barq=\nabla(\Pi_\Gamma \barq)=\Pi_\Gamma \nabla \barq$.   Let $\bP^{-1}:\gamma \rightarrow \Gamma$ denote the inverse mapping of $\bP:\Gamma \rightarrow \gamma$.  By (2.18) of \cite{DD07} and the definition of tangential scalar gradient we have that for $\overline{U} \in H^1(\Gamma)$, $\nabla (\overline{U} \circ \bP^{-1})=\nabla_\Gamma \overline{U}   \Pi_\Gamma [\bI-\frac{\bnu \otimes \bnu_\Gamma}{\bnu_\Gamma \cdot \bnu}]$.  When combined with the identity $\nabla \barq=\Pi_\Gamma \nabla \barq$, this yields that 
    \begin{align}
        \label{grad_ident}
        \begin{aligned}
            \nabla (\barq \circ \bP^{-1}) &= (\nabla \barq) \circ \bP^{-1} \Pi_\Gamma [\bI-\frac{\bnu \otimes \bnu_\Gamma}{\bnu_\Gamma \cdot \bnu}] = \Pi_\Gamma \nabla \barq \Pi_\Gamma[\bI-\frac{\bnu \otimes \bnu_\Gamma}{\bnu_\Gamma \cdot \bnu}]
            \\ &  = ( \nabla_\Gamma \barq) \circ \bP^{-1} [\bI-\frac{\bnu \otimes \bnu_\Gamma}{\bnu_\Gamma \cdot \bnu}] .
        \end{aligned}
    \end{align}
    Thus we have that
    \begin{align}
        \label{interm_bound}
        \begin{aligned}
            |\nabla_\gamma \bq|& =|\Pi \nabla \bq \Pi| \le |\nabla \bq|  = |\nabla [ \bL \barq \circ \bP^{-1}]|
            \\ & \le | \nabla \bL||\barq\circ \bP^{-1} |  + |\bL \nabla (\barq \circ \bP^{-1})|
            \\ & = | \nabla \bL||\barq\circ \bP^{-1} | + |\bL \Pi_\Gamma (\nabla \barq)\circ\bP^{-1}  \Pi_\Gamma [\bI-\frac{\bnu \otimes \bnu_\Gamma}{\bnu_\Gamma \cdot \bnu}]| 
            \\ & \lesssim |\barq \circ \bP^{-1}| + |(\nabla_\Gamma \barq) \circ \bP^{-1}|.  
        \end{aligned}
    \end{align}
    Using equivalence of norms then yields
    \begin{gather*}
     \|\bq\|_{H^1(T)} \lesssim \|\barq\|_{H^1(\barT)}.
    \end{gather*}
    We finally prove \eqref{H2_bound}.  Using the calculation in \eqref{interm_bound} and the boundedness of the derivatives of $\Pi$ (since $\gamma$ is $C^4$), we have that
    \begin{gather*}
     |D_\gamma \nabla_\gamma \bq|=|D_\gamma (\Pi \nabla \bq \Pi)| \lesssim |\nabla \bq| + |D_\gamma \nabla \bq| \lesssim |\barq|+ |\nabla_\Gamma \barq| + |D_\gamma \nabla \bq|.
    \end{gather*}
    Recalling that $\bq=\bL \barq \circ \bP^{-1}$, we have that 
    \begin{gather*}
     |D_\gamma \nabla \bq| \lesssim |D^2 \bL||\barq \circ \bP^{-1}| + |\nabla \bL| |\nabla (\barq\circ \bP^{-1})| + |\bL| |D_\gamma \nabla  (\barq \circ \bP^{-1})|.
    \end{gather*}
    Because $\gamma$ and therefore also $d$ is $C^4$, 
    elementary calculations yield $|\nabla^j \bL| \lesssim 1$ ($j = 0,1,2$) 
    and $|\nabla[\bI-\frac{\bnu \otimes \bnu_\Gamma}{\bnu_\Gamma \cdot \bnu}]| \lesssim 1$. 
    Employing \eqref{grad_ident} and the product rule thus yields
    \begin{align}
        \begin{aligned} 
            |D_\gamma \nabla (\barq \circ \bP^{-1})| 
            &=
            \Big|D_\gamma \Big ( (\nabla_\Gamma \barq) \circ \bP^{-1}  [\bI-\frac{\bnu \otimes \bnu_\Gamma}{\bnu\cdot \bnu_\Gamma}] \Big) \Big| 
            \\&\lesssim
            |\nabla_\Gamma \barq|  + |D_\gamma ( (\nabla_\Gamma \barq) \circ \bP^{-1}) |.  
        \end{aligned}
    \end{align}
    Recalling that $\barq$ is componentwise affine and $\Pi_\Gamma$ is constant, 
    we have that $\nabla_\Gamma \barq$ is constant.  
    Thus  $|D_\gamma \big ( (\nabla_\Gamma \barq) \circ \bP^{-1}\big ) | = 0$.  
    Collecting the previous three inequalities then yields
    \begin{gather*}
     |D_\gamma \nabla_\gamma \bq| \lesssim |\barq \circ \bP^{-1}| + |(\nabla_\Gamma \barq)\circ \bP^{-1}|,
    \end{gather*}
    which yields the desired result after employing equivalence of $L_2$ norms and then the $H^1$ equivalence in \eqref{L2_equiv}.  
\end{proof}

\subsection{Finite element space}
The finite element method we advocate is based on a Brezzi-Douglas-Marini finite element triplet. We denote by ${\bf XT}(\MT)$ the resulting finite element space, which we describe in detail below.  The essential properties of our method are that it is divergence conforming and consists of tangential vector fields, that is, $\bV \cdot \bnu = 0$ for all $\bV \in {\bf XT}(\mathcal{T})$.  However, members of ${\bf XT}(\mathcal{T})$ are only continuous in the normal direction across element boundaries, so ${\bf XT}(\mathcal{T}) \not \subset H^1(\gamma)^3$ and penalization is necessary in order to achieve stability.  

To date no conforming finite element subspaces of $\HT$ have been constructed, and it is not clear how this construction might be approached. In the context of standard finite element constructions, the difficulty lies in enforcing strict continuity across element boundaries in order to ensure $H^1$ continuity while simultaneously ensuring that finite element functions are tangent vectors.  \ARD{A typical strategy for constructing finite element spaces on $\gamma$ is to define corresponding spaces on the discrete surface $\Gamma$ and transform them to $\gamma$ via a canonical transformation.  However, it is not clear that the underlying space ${\bf HT}^1(\Gamma)$ on the discrete surface is well-defined.}  In particular, a natural characterization is ${\bf HT}^1(\Gamma)=\Pi_\Gamma [H^1(\Gamma)]^3$.  Because the vector field $\Pi_\Gamma={\bf I}-\bnu_\Gamma \otimes \bnu_\Gamma$ only possesses $L_\infty$ regularity for the Lipschitz surface $\Gamma$, Sobolev multiplier properties lead us to expect that $\Pi_\Gamma [H^1(\Gamma)]^3 \not\subset [H^1(\Gamma)]^3$.  For example, the product of a constant function (which lies in $H^1$) by a piecewise constant function lying in $L_\infty$ is not in $H^1$.  Thus it is not even immediately clear that there is a meaningful definition of the underlying space ${\bf HT}^1(\Gamma)$.


Let $\widehat{T}\subset \mathbb{R}^2$ be the standard reference triangle. 
We begin by letting ${\bf XT}(\widehat{T})=[\mathbb{P}_1]^2$ be the affine vector functions on $\widehat{T}$.  
Given $\barT \in \bMT$, let $A_{\barT} :\widehat{T} \rightarrow \barT$ be the natural affine reference transformation. 
We then define ${\bf XT}(\bMT)$  to be the set of vector functions $\barq:\Gamma \rightarrow \mathbb R^3$ 
such that for all $\barT \in \bMT$ we have $\barq|_{\barT}=\piola_{A_{\barT}} \widehat{\bq}$ for some $\widehat{\bq} \in {\bf XT}(\widehat{T})$, 
and in addition the members of ${\bf XT}(\bMT)$ possess normal continuity.  
That is, given $\barT,\barT' \in \bMT$ sharing an edge $\bare$, $\barq_{\barT} \cdot \barbn+\barq_{\barT'} \cdot \barbn' = 0$, 
where $\barbn$ and $\barbn'$ are the outward-pointing unit conormals calculated on $\bare$ from $\barT$ and $\barT'$, respectively.  
Note that in contrast to the case of Euclidean domains there holds in general that $\barbn \neq -\barbn'$.  
Let also $\V(\bMT)=\{\barv \in L_2(\Gamma):\barv|_{\barT} \in \mathbb{P}_0(\barT), \barT \in \bMT\}$ be the set of discontinuous piecewise constants.  
Finally, let 
\begin{align*}
{\bf XT}(\T)=\piola_{\bP} {\bf XT}(\bMT),
\end{align*}
and recalling the definition \eqref{eq:mu_def} we let $\V(\T)=\{\frac{1}{\mu} \barv \circ \bP^{-1}:\barv \in \V(\bMT)\}$. 
Note that while the space $\V(\bMT)$ defined on the discrete surface consists of piecewise constants, 
the space $\V(\T)$ defined on the continuous surface $\gamma$ does not due to the presence of the term $\frac{1}{\mu}$ in its definition. 
This definition ensures that a commuting diagram property holds, 
and in particular that $\divergence_\gamma {\bf XT}(\T)=\V(\T)$; cf. \eqref{piola_div}.   
Finally, we denote by $\VT[\T]$ its subspace of tangential divergence free vectors
\begin{gather*}
    \VT[\T]:=\left\lbrace  \bq \in {\bf XT}(\T) \ : \ \divergence_\gamma \bq = 0 \quad \textrm{a.e. in }\gamma   \right\rbrace.
\end{gather*}

Below we also recall the standard BDM interpolant; we defer this definition and statement of approximation properties of the interpolant until we have defined an appropriate discrete norm related to our interior penalty method.


\subsection{Interior penalty method}

We propose to consider an interior penalty type method 
to construct the discrete approximation $\bU^\varepsilon \in \VT[\MT]$ of $\bu^\varepsilon$ given by \eqref{e:valocity_divergence_eps}.
This reads: given $\rho >0$,  find $\bU^\varepsilon \in \VT[\MT]$ such that
\begin{align}
    \label{FE_def}
    a_\varepsilon(\bU^\varepsilon,\bV)
    +
    j(\bU^\varepsilon,\bV)
    = 
    \int_\gamma \bef \cdot \bV
\end{align}
for all $\bV \in  \VT[\MT]$. Here
\begin{gather*}
    j(\bW,\bV)
    := 2 \Big [
    -\int_{\Sigma } 
    \{ \Deform_\gamma \bW \cdot \bn \} \cdot \lbrack \bV \rbrack
    - 
    \int_\Sigma 
    \{ \Deform_\gamma \bV \cdot \bn \} \cdot \lbrack \bW \rbrack
    + 
    \frac{\rho}{h} 
    \int_{\Sigma}
    \lbrack \bW \rbrack \cdot  \lbrack \bV \rbrack \Big ] ,
\end{gather*}
where we denote by $\Sigma$ the set of all edges of $\MT$,
$h := \max_{\barT \in \bMT} \textrm{diam}(\barT)$, 
$\bn|_{\partial T}$ is the outside pointing conormal of $T$, 
and 
\begin{align*}
 \{ \Deform_\gamma \bW \cdot \bn \} 
 :=
 \frac 1 2 
 \left( 
    \Deform_\gamma \bW^+ \cdot \bn^+ - \Deform_\gamma \bW^- \cdot \bn^-
 \right),
 \qquad
 \lbrack \bW \rbrack := V^+-V^-.
\end{align*}
In addition, we have used the convention 
\begin{gather*}
   a_\varepsilon(\bU^\varepsilon,\bV) = 2 \sum_{T \in \MT} \int_T \Deform_T \bU^\varepsilon : \Deform_T \bV+\varepsilon \int_\gamma \bU^\varepsilon \cdot \bV.
\end{gather*}
We denote by $\vvvert . \vvvert_{1,h}$ the discrete norm defined by
\begin{align*}
 \vvvert \bV \vvvert_{1,h}^2
 :=
 \sum_{T \in \MT} \int_T |\Deform_T \bV|^2
 + 
 \frac{\rho}{h} \| \lbrack \bV \rbrack\|_{L_2(\Sigma)}^2
\end{align*}
and also define the weighted discrete energy norm
\begin{gather*}
     \vvvert \cdot \vvvert_{1,h,\varepsilon}^2=\vvvert \cdot \vvvert_{1,h}^2 + \varepsilon \|\cdot \|_{L_2(\gamma)}^2.
\end{gather*}
When solving \eqref{FE_def} computationally, we employ the full perturbed Stokes system: 
Find $(\bU^\varepsilon, P) \in {\bf XT}(\T) \times \V_\# (\T)$ such that
\begin{align*} 
    \begin{aligned}
        a_\varepsilon(\bU^\varepsilon,\bV)
        +
        j(\bU^\varepsilon,\bV)-\int_\gamma P \, \divergence_\gamma \bV& =\int_\gamma {\bf f} \cdot \bV, ~~~\forall \bV \in {\bf XT}(\T),
        \\ \int_\gamma q \, \divergence_\gamma \bU^\varepsilon &= 0, ~~~\forall q \in \V_\#(\T).
    \end{aligned}
\end{align*}
However, the divergence-conforming nature of our finite element space allows us to consider only the reduced system \eqref{FE_def} for purposes of establishing error estimates.  Approximation of the pressure is not a focus of this work, but error estimates could be established by proving an appropriate inf-sup inequality.

\subsection{The BDM interpolant and a Discrete Korn-type inequality}

Let $\widehat{I}: [H^1(\widehat{T})]^2 \rightarrow {\bf XT}(\widehat{T})$ be the standard interpolant for the BDM space on the reference element $\widehat{T}$. The degrees of freedom of $\widehat{I}$ consist of moments of normal components on element edges \cite[p. 125]{BF91}:
\begin{align}
    \label{bdm_dof}
    \int_{\partial \widehat{T}} (\widehat{\bq} -\widehat{I} \widehat{\bq}) \cdot \widehat{\eta} \,p_1  = 0, ~~p_1 \in R_1(\partial \widehat{T}),
\end{align}
where $R_1$ consists of functions which are affine on each edge of $\widehat{T}$ but not necessarily continuous.    Let then $I_\T:\HT \rightarrow {\bf XT}(\T)$ be given elementwise by $I_\T \bq= \piola_{\bP \circ A_T} \widehat{I} (\piola_{A_T^{-1} \circ \bP^{-1}} \bq)$.    There holds the commuting diagram property
\begin{align}
    \label{commute}
    \divergence_\gamma I_\T \bq = \pi_\V \divergence_\gamma \bq,
\end{align}
where $\pi_\V$ is the $L_2$ projection onto $\V$. 

\begin{lemma}
    \label{lem:approx_properties}
    Given $T \in \T$ and $\bq \in {\bf HT}^1(T) \cap [H^2(T)]^{3}$, there holds
    \begin{align}
        \label{eq:approx}
        \begin{aligned}
        \|\bq-I_\T \bq\|_{L_2(T)} & + h\|\nabla_\gamma(\bq-I_\T \bq)\|_{L_2(T)}+ h\vvvert\bq-I_\T \bq\vvvert_{1,h} \ARD{+h^2|I_\T \bq|_{H^2(T)} }  
      \\ &   \lesssim h^2 \|\bq\|_{H^2(T)}.
        \end{aligned}
    \end{align}
\end{lemma}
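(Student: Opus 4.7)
\medskip

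\noindent\textbf{Proof plan.} The argument follows the classical reduce-to-reference recipe, adapted for the two-stage map $\widehat T \xrightarrow{A_T} \barT \xrightarrow{\bP} T$. The key observation is that Piola transforms compose, so $\mathcal{P}_{A_T^{-1}\circ \bP^{-1}} = \mathcal{P}_{A_T^{-1}} \circ \mathcal{P}_{\bP^{-1}}$ and $\mathcal{P}_{\bP \circ A_T} = \mathcal{P}_{\bP} \circ \mathcal{P}_{A_T}$. Setting $\barq := \mathcal{P}_{\bP^{-1}} \bq$ on $\barT$ and letting $\widetilde{I}_{\bMT}\barq := \mathcal{P}_{A_T}\widehat{I}(\mathcal{P}_{A_T^{-1}}\barq)$ be the usual affine BDM interpolant on the flat triangle, we see that $\mathcal{P}_{\bP^{-1}}(I_\T \bq) = \widetilde I_{\bMT}\barq$. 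This reduces every $L_2$ and $H^1$ interpolation error on $T$ to the corresponding quantity on the flat element $\barT$ via the norm equivalences in \eqref{L2_equiv}.

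\medskip

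\noindent The plan then has the following steps. First, on $\barT$ I invoke the standard BDM approximation estimate (obtained by scaling the interpolation error on $\widehat T$ through the affine Piola $\mathcal{P}_{A_T}$):
\begin{equation*}
\|\barq - \widetilde I_{\bMT}\barq\|_{L_2(\barT)} + h\,\|\nabla(\barq - \widetilde I_{\bMT}\barq)\|_{L_2(\barT)} \lesssim h^2\,|\barq|_{H^2(\barT)}.
\end{equation*}
Second, I establish the $H^2$-pullback bound $|\barq|_{H^2(\barT)} \lesssim \|\bq\|_{H^2(T)}$ by differentiating the identity $\barq(\barx) = \bM(\barx)\,\bq(\bP(\barx))$ twice via the product and chain rules, exactly as in the preceding lemma but at one higher order. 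Since $\gamma\in C^4$, the matrix $\bM$ and the map $\bP$ are $C^3$, so the second-derivative contributions are bounded uniformly. Combined with \eqref{L2_equiv}, these two steps control the $L_2(T)$ and $H^1(T)$ error terms on the left-hand side.

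\medskip

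\noindent Third, I handle the jump contribution appearing in $\vvvert\cdot\vvvert_{1,h}$. Since $\bq\in\HT$ has a single-valued trace on each edge, $[\bq]=0$ and therefore $\|[\bq-I_\T\bq]\|_{L_2(e)} = \|[I_\T\bq]\|_{L_2(e)} \le \|(I_\T\bq-\bq)|_{T_+}\|_{L_2(e)} + \|(I_\T\bq-\bq)|_{T_-}\|_{L_2(e)}$. A scaled trace inequality of the form $h^{-1}\|v\|_{L_2(\partial T)}^2 \lesssim h^{-2}\|v\|_{L_2(T)}^2 + \|\nabla_\gamma v\|_{L_2(T)}^2$ together with the already-controlled $L_2$ and $H^1$ errors yields $\tfrac{\rho}{h}\|[\bq-I_\T\bq]\|_{L_2(\Sigma)}^2 \lesssim h^2\|\bq\|_{H^2(T)}^2$, giving the $h\,\vvvert\cdot\vvvert_{1,h}$ piece. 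Fourth, for the $h^2|I_\T\bq|_{H^2(T)}$ term I use that $\widetilde I_{\bMT}\barq$ is componentwise affine on $\barT$, so \eqref{H2_bound} applies to $I_\T\bq$ and gives $|I_\T\bq|_{H^2(T)} \lesssim \|I_\T\bq\|_{H^1(T)} \le \|\bq\|_{H^1(T)} + \|I_\T\bq-\bq\|_{H^1(T)} \lesssim \|\bq\|_{H^2(T)}$ after invoking the $H^1$ estimate just proved.

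\medskip

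\noindent The main obstacle is the $H^2$-pullback bound in the second step: the preceding lemma supplied only $L_2$ and $H^1$ equivalences in general, so one has to differentiate $\barq = \bM\,(\bq\circ\bP)$ one more time and carefully track the contributions of $\nabla^2\bM$, $\nabla\bM\otimes\nabla\bP$, and $\bM\otimes\nabla^2\bP$, each of which is controlled under the $C^4$ regularity assumption on $\gamma$. The remaining pieces are essentially bookkeeping from standard BDM theory and a trace inequality.
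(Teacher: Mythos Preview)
Your proposal is correct and follows essentially the same route as the paper: pull back to the flat triangle via the Piola map, invoke the classical BDM estimate from \cite{BF91}, return to $T$ using the norm equivalences \eqref{L2_equiv}, handle the jump term by a scaled trace inequality against the already-established volume bounds, and close the $h^{2}|I_\T\bq|_{H^2(T)}$ term via \eqref{H2_bound}. The one point on which you are more careful than the paper is the $H^{2}$-pullback $|\barq|_{H^{2}(\barT)} \lesssim \|\bq\|_{H^{2}(T)}$; the paper simply asserts that ``transforming these inequalities to $T\in\T$ via $\piola_{\bP}$'' suffices, whereas you correctly note that Lemma~4.1 only supplies $L_2$ and $H^1$ equivalences and that the second-order bound requires an additional differentiation of $\barq=\bM(\bq\circ\bP)$ under the $C^{4}$ assumption on $\gamma$.
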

\begin{proof}
    We briefly outline the proof, which is mostly standard.  
    For $\barT \in \overline{\T}$ and $\barq \in  {\bf HT}^1(\barT) \cap [H^2(\barT)]^{3}$, the estimate 
    \begin{gather*}
        \|\barq-I_{\barT} \barq\|_{L_2(\barT)} + h\|\nabla_{\barT}(\barq-I_{\barT} \barq)\|_{L_2(\barT)} \lesssim h^2 \|\barq\|_{H^2(\barT)}
    \end{gather*}
    is essentially contained in \cite[Proposition 3.6]{BF91}.
    Transforming these inequalities to $T \in \T$ via $\piola_{\bP}$ yields the desired bounds for the $L_2$ and $H^1$ (derivative) terms in \eqref{eq:approx} on $T \in \T$.    Applying standard arguments involving a scaled trace inequality along with the previously proved bounds for the volume norms in \eqref{eq:approx} yields
    \begin{equation}\label{e:handle_jump}
         h \| \lbrack \bq-I_\T \bq \rbrack\|_{L_2(\Sigma)}^2 \lesssim h^4 \| \bq \|^2_{H^2(T)}.  
    \end{equation}
        %
\ARD{The bound for $h^2 |I_\T \bq|_{H^2(T)}$ follows from \eqref{H2_bound} and the already proved $H^1$ bounds.}   \end{proof}

In order to analyze our method we shall need the following discrete Korn-type inequality relating the discrete $H^1$ norm
\begin{gather*}
     \| \bq \|_{H^1_h(\gamma)}:= \left(\sum_{T \in \MT} \int_T | \nabla_\gamma \bq |^2 \right)^{1/2} + \| \bq \|_{L_2(\gamma)}
\end{gather*}
and $\vvvert \bq \vvvert_{1,h}+\|\bq\|_{L_2(\gamma)}$ for functions lying in ${\bf XT}[\T]$.   The proof is deferred to Appendix~\ref{sec:Korn_app}.

\begin{lemma}
\label{lem:disc_Korn}
    We have for all $\bq \in {\bf XT}(\T)$ that
    \begin{align}
        \label{eq:disc_Korn}
        \begin{aligned}
        \|\bq\|_{H_h^1(\gamma)} & \lesssim \|{\rm Def}_{\gamma,h} \bq\|_{L_2(\gamma)} + h^{-1/2}\|[\bq]\|_{L_2(\Sigma)}+ \|\bq\|_{L_2(\gamma)} 
        \\ & \lesssim \vvvert\bq\vvvert_{1,h} + \|\bq\|_{L_2(\gamma)}.
    \end{aligned}
    \end{align}
\end{lemma}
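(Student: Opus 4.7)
The second inequality in \eqref{eq:disc_Korn} is immediate from the definition of $\vvvert \cdot \vvvert_{1,h}$, since $\|{\rm Def}_{\gamma,h}\bq\|_{L_2(\gamma)}^2$ and (up to the factor $\rho$) $h^{-1}\|[\bq]\|_{L_2(\Sigma)}^2$ are precisely the two summands in $\vvvert\bq\vvvert_{1,h}^2$.

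For the first (main) inequality, the plan is to reduce the problem to a classical Brenner-type discrete Korn inequality on the polyhedral surface $\Gamma$, where each element is a flat Euclidean triangle embedded in $\mathbb{R}^3$. Given $\bq \in {\bf XT}(\T)$, set $\barq := \piola_{\bP^{-1}} \bq \in {\bf XT}(\bMT)$. The equivalences \eqref{L2_equiv} yield elementwise $\|\bq\|_{L_2(T)} \simeq \|\barq\|_{L_2(\barT)}$ and $\|\nabla_\gamma \bq\|_{L_2(T)} \simeq \|\nabla_\Gamma \barq\|_{L_2(\barT)}$. A direct computation from the explicit form of the Piola transform \eqref{eq10}, together with the smoothness and uniform invertibility of the associated transformation matrices on $\mathcal{N}$, shows the matching deformation estimate
\begin{equation*}
    \|{\rm Def}_\Gamma \barq\|_{L_2(\barT)} \lesssim \|{\rm Def}_\gamma \bq\|_{L_2(T)} + \|\bq\|_{L_2(T)},
\end{equation*}
and an analogous scaled trace argument transfers the jump seminorm, giving $h^{-1/2}\|[\barq]\|_{L_2(\bare)} \lesssim h^{-1/2}\|[\bq]\|_{L_2(e)} + \|\bq\|_{L_2(\omega_e)}$ on each interior edge $e = \bP(\bare)$, where $\omega_e$ is the patch of two adjacent elements.

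Once on $\Gamma$, since each face is a flat Euclidean triangle and $\barq$ is piecewise affine, I would apply the classical Brenner-style piecewise-$H^1$ Korn inequality componentwise in ambient Cartesian coordinates to conclude
\begin{equation*}
    \|\barq\|_{H^1_h(\Gamma)} \lesssim \|{\rm Def}_{\Gamma,h} \barq\|_{L_2(\Gamma)} + h^{-1/2}\|[\barq]\|_{L_2(\bar\Sigma)} + \|\barq\|_{L_2(\Gamma)},
\end{equation*}
where $\bar\Sigma$ denotes the edge set of $\bMT$. Combining this with the three transfer estimates above and pulling the left-hand side back to $\gamma$ via \eqref{L2_equiv} delivers the desired bound.

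The principal technical obstacle I anticipate is the careful bookkeeping of the jump seminorm transfer: the conormals $\bn$ and $\barbn$ differ pointwise, and the tangent planes of $\Gamma$ and $\gamma$ do not coincide, so $[\bq]$ is not literally the Piola image of $[\barq]$. This discrepancy must be controlled by combining the smoothness and uniform invertibility of the Piola matrices on $\mathcal{N}$ with a scaled trace inequality; the resulting lower-order $L_2$ perturbations are absorbed by the $\|\bq\|_{L_2(\gamma)}$ term already present on the right-hand side of \eqref{eq:disc_Korn}. A secondary subtlety is that on $\Gamma$ the field $\barq$ takes values in $\mathbb{R}^3$ rather than in a single ambient plane, but Brenner's argument localizes per face and each ambient component can be treated independently.
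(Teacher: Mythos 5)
Your reduction to a "classical Brenner-style piecewise-$H^1$ Korn inequality" on the polyhedral surface $\Gamma$ does not close, and the gap is exactly the content of the lemma. Brenner's inequality in \cite{Bre04} is stated and proved for piecewise $H^1$ vector fields on a \emph{flat polygonal domain} in $\mathbb{R}^n$, not on a piecewise-planar surface embedded in $\mathbb{R}^3$. On $\Gamma$ the tangent plane (and hence the very meaning of $\mathrm{Def}_\Gamma$, the conormal, and the kernel of the deformation operator) changes from face to face; there is no ambient Euclidean domain on which the result can be quoted. More fundamentally, the suggestion that "each ambient component can be treated independently" cannot work: Korn's inequality is not a componentwise statement. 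For a fixed scalar component $\barq_j$ the only contributions to $\mathrm{Def}_{\Gamma}\barq$ are symmetrized cross-derivatives, and there is no way to bound $\|\nabla \barq_j\|$ by the deformation of that single component — the whole point of Korn is the coupling between components. So the inequality you invoke on $\Gamma$ is neither classical nor obtainable componentwise; it is a statement of precisely the same difficulty as \eqref{eq:disc_Korn}, and your transfer estimates only trade a smooth surface for a Lipschitz one.

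The paper's argument (and the genuine adaptation of Brenner's \emph{technique}, as opposed to quoting his \emph{result}) goes the other way. One introduces a conforming tangential finite element subspace ${\bf YT}(\T)\subset\HT$ built from continuous piecewise linears on $\Gamma$ projected to the tangent bundle of $\gamma$, on which the \emph{continuous} Korn inequality \eqref{e:Korn} is available. Given $\bq\in{\bf XT}(\T)$ one constructs a Cl\'ement-type vertex-averaged interpolant $\bv\in{\bf YT}(\T)$ and shows $\|\bq-\bv\|_{H^1_h(\gamma)}\lesssim \vvvert\bq\vvvert_{1,h}+\|\bq\|_{L_2(\gamma)}$. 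This is where the surface-specific geometric work lives: comparing the Piola transform at vertices to the projection $\Pi$, using $d=0$ at the interpolated vertices, controlling the mismatch between $\bnu$ and the piecewise constant $\bnu_\Gamma$, and pushing jump terms across chains of adjacent faces. These are the real technical difficulties flagged in the paper's remark that its "techniques are motivated by \cite{Bre04} with additional difficulties arising from the fact that we are working on surfaces." Your transfer estimates (the $L_2$, deformation, and jump comparisons between $\gamma$ and $\Gamma$ via the Piola map) are individually plausible, but they would be inputs to establishing a Korn inequality on $\Gamma$, not a substitute for doing so.
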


\subsection{Stability and energy error bounds for the IP method}
\label{subsec:basic_estimates}

We first prove stability of the interior penalty method.  
\begin{lemma}
    \label{lem:well-posed}
    Provided $\rho$ is sufficiently large and $\epsilon \ge h^2$, 
    we have for $\bV, \bW \in {\bf XT}(\T)$
    \begin{align}
    \label{coercivity}
    a_\epsilon(\bW,\bW) + j(\bW, \bW) \gtrsim \vvvert\bW\vvvert_{1,h,\varepsilon}^2 \end{align}
    and 
    \begin{equation}\label{e:cont_dg}
        a_{\varepsilon}(\bW,\bV)
        +
        j(\bW,\bV)
        \lesssim
        \vvvert \bW \vvvert_{1,h,\varepsilon}
        \vvvert \bV \vvvert_{1,h, \varepsilon}.
    \end{equation}
    Consequently 
    \begin{align}
        \label{stability}
        \vvvert \bU^\varepsilon \vvvert_{1,h} +\varepsilon^{1/2} \|\bU^\varepsilon\|_{L_2(\gamma)} \lesssim  \| \bef \|_{L_2(\gamma)}.
    \end{align}
\end{lemma}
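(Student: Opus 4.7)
My plan is to prove the three assertions in the order they are stated, since the stability bound \eqref{stability} falls out of coercivity plus a test-function argument.

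For the coercivity \eqref{coercivity}, I would expand
\begin{align*}
    a_\varepsilon(\bW,\bW) + j(\bW,\bW) &= 2\sum_{T\in\MT}\|\Deform_\gamma\bW\|^2_{L_2(T)} + \varepsilon\|\bW\|^2_{L_2(\gamma)} \\
    &\quad - 4\int_\Sigma \{\Deform_\gamma\bW\cdot\bn\}\cdot[\bW] + \frac{2\rho}{h}\|[\bW]\|^2_{L_2(\Sigma)}.
\end{align*}
The four "good" terms are already precisely $2\vvvert\bW\vvvert_{1,h,\varepsilon}^2$ up to a factor of $2$ on the jump piece, and I just have to absorb the consistency term into them. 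The crucial ingredient is a discrete trace/inverse inequality of the form
\begin{align*}
    \|\{\Deform_\gamma\bW\cdot\bn\}\|^2_{L_2(\Sigma)} \lesssim h^{-1}\sum_{T\in\MT}\|\Deform_\gamma\bW\|^2_{L_2(T)}, \qquad \bW \in {\bf XT}(\T).
\end{align*}
I would derive this by pulling $\bW$ back to $\widehat{T}$ via the composite Piola transform, invoking the standard polynomial inverse and trace inequalities on $\widehat T$, and pushing the estimate forward using the norm equivalences \eqref{L2_equiv} together with the higher-order bound \eqref{H2_bound} to control tangential derivatives of $\Deform_\gamma \bW$ on curved elements. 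Once this inequality is in hand, applying Cauchy--Schwarz and Young's inequality with a parameter $\delta$ gives
\begin{align*}
    4\Bigl|\int_\Sigma\{\Deform_\gamma\bW\cdot\bn\}\cdot[\bW]\Bigr| \le \delta \sum_{T\in\MT}\|\Deform_\gamma\bW\|^2_{L_2(T)} + \frac{C}{\delta\, h}\|[\bW]\|^2_{L_2(\Sigma)},
\end{align*}
so choosing $\delta$ small and $\rho$ larger than the resulting threshold yields \eqref{coercivity}.

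The continuity \eqref{e:cont_dg} is easier: I apply Cauchy--Schwarz termwise to $a_\varepsilon(\bW,\bV)+j(\bW,\bV)$, using the same trace inequality on the two consistency contributions and the weighted splitting $\|\{\Deform_\gamma\bW\cdot\bn\}\|_{L_2(\Sigma)}\,\|[\bV]\|_{L_2(\Sigma)} \le (h/\rho)^{1/2}\|\{\Deform_\gamma\bW\cdot\bn\}\|_{L_2(\Sigma)}\cdot(\rho/h)^{1/2}\|[\bV]\|_{L_2(\Sigma)}$. The deformation and $\varepsilon L_2$ terms pair directly with the corresponding components of $\vvvert\cdot\vvvert_{1,h,\varepsilon}$, and no absorption is required.

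Finally, stability \eqref{stability} comes from testing \eqref{FE_def} with $\bV=\bU^\varepsilon$: coercivity gives $\vvvert\bU^\varepsilon\vvvert^2_{1,h,\varepsilon} \lesssim (\bef,\bU^\varepsilon)_{L_2(\gamma)}$, which I bound by $\|\bef\|_{L_2(\gamma)}\|\bU^\varepsilon\|_{L_2(\gamma)}$ via Cauchy--Schwarz; I then conclude by combining the $\varepsilon\|\bU^\varepsilon\|^2_{L_2}$ contribution to $\vvvert\cdot\vvvert^2_{1,h,\varepsilon}$ with the discrete Korn inequality of Lemma \ref{lem:disc_Korn} and the hypothesis $\varepsilon \ge h^2$ to control the full left-hand side independently of $\varepsilon$. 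The principal obstacle I anticipate is the trace inequality step: on curved surface elements the Piola-transformed BDM functions are not genuine polynomials on $T$, so the inverse estimate must be obtained by a careful reference-element argument, and this is exactly what \eqref{L2_equiv}--\eqref{H2_bound} are tailored to deliver.
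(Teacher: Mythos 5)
The overall architecture you propose (Cauchy--Schwarz with parameter, absorb the consistency term, choose $\rho$ large; test with $\bU^\varepsilon$ for stability) is the right one, and it matches the paper's skeleton. However, the key trace inequality you rely on,
\begin{align*}
\|\{\Deform_\gamma\bW\cdot\bn\}\|^2_{L_2(\Sigma)} \lesssim h^{-1}\sum_{T\in\MT}\|\Deform_\gamma\bW\|^2_{L_2(T)}, \qquad \bW \in {\bf XT}(\T),
\end{align*}
does \emph{not} hold in this clean form for Piola-transformed BDM fields on the curved surface, and the paper's proof is structured precisely to work around this. On the reference element $\widehat{T}$, yes, the fields are affine and the inverse/trace argument is trivial; but after mapping to $T=\bP(\barT)\subset\gamma$ the Piola transform multiplies by the spatially varying matrix $\mu^{-1}(\Pi-d\bH)$, so $\Deform_\gamma\bW$ is no longer (even elementwise) a polynomial. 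The scaled trace inequality then produces a term $h^2\|D_\gamma\Deform_\gamma\bW\|^2_{L_2(T)}$, and \eqref{H2_bound}, which you correctly identify as the tool for controlling this, only bounds it by the \emph{full} $H^1$-norm $\|\bW\|_{H^1(T)}$, not by $\|\Deform_\gamma\bW\|_{L_2(T)}$. To close the loop one must invoke the discrete Korn inequality \eqref{eq:disc_Korn}, which reintroduces $h^{-1}\|[\bW]\|^2_{L_2(\Sigma)}$ and $\|\bW\|^2_{L_2(\gamma)}$ on the right. The correct estimate (the paper's \eqref{eq130}) is therefore
\begin{align*}
\sum_{T\in\T} h\int_{\partial T}|\Deform_\gamma\bW|^2 \lesssim \sum_{T\in\T}\|\Deform_\gamma\bW\|^2_{L_2(T)} + h\int_\Sigma[\bW]^2 + \varepsilon\|\bW\|^2_{L_2(\gamma)},
\end{align*}
where the last term is obtained by absorbing $h^2\|\bW\|^2_{L_2}$ into $\varepsilon\|\bW\|^2_{L_2}$ \emph{using the hypothesis $\varepsilon\ge h^2$}. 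The extra jump term can then be absorbed by taking $\rho$ larger, and the $\varepsilon\|\bW\|^2_{L_2}$ contribution is covered by the $\varepsilon$-weighted $L_2$ piece already present in $a_\varepsilon(\bW,\bW)$.

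This matters for your argument in two ways. First, the inequality you state as the "crucial ingredient" is false as written; you cannot prove it by the reference-element argument because the forward Piola map to $\gamma$ is not affine. Second, you have placed the hypothesis $\varepsilon\ge h^2$ and the discrete Korn inequality in step 3 (stability), whereas they are in fact consumed already inside the coercivity proof of \eqref{coercivity}. Once you have the paper's version of the trace estimate with the three right-hand-side terms, your absorption argument and choice of $\rho$ go through; and similarly the continuity proof needs the same three-term trace bound, not the two-term one.
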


\begin{proof}
  We first calculate that for $\delta>0$,
 \begin{align}
 \label{j_calc}
 \begin{aligned}
\frac{1}{2}  j(\bW,\bW)
  &
  = 
  -2
  \int_{\Sigma } 
  \{ \Deform_\gamma \bW \cdot \bn \} \cdot \lbrack \bW \rbrack
  +
  \frac{\rho}{h} 
  \int_{\Sigma}
  \lbrack \bW \rbrack^{2} 
  \\&
\ge
  -
 \sum_{T \in \T} \int_{ \partial T } 
  h^{1/2}
  | \Deform_\gamma \bW | h^{-1/2} |\lbrack \bW \rbrack|
  +
  \frac{\rho}{h} 
  \int_{\Sigma}
  \lbrack \bW \rbrack^{2} 
  \\&
\ge
  - 
 \frac{1}2 h \delta \sum_{T \in \T} \int_{\partial T} 
| \Deform_\gamma \bW |^{2} 
  +
\left ( \frac{\rho}{h} -\frac{1}{2\delta h} \right)  
  \int_{\Sigma}
  \lbrack \bW \rbrack^{2}.
  \end{aligned}
 \end{align}
 Using a scaled trace inequality, \eqref{H2_bound}, and the discrete Korn inequality \eqref{eq:disc_Korn} while recalling the definition of $\vvvert \cdot \vvvert_{1,h}$ and that $\varepsilon \ge h^2$ yields that for $T \in \T$
\begin{align}
\label{eq130}
\begin{aligned}
& \sum_{T \in \T}   h \int_{\partial T}   |\Deform_\gamma \bW|^2  \lesssim \sum_{T\in \T} \left(\|\Deform_\gamma \bW\|^2_{L_2(T)} + h^2 \|D_\gamma \Deform_\gamma \bW\|_{L_2(T)}^2\right)
 \\ & \lesssim \sum_{T \in \T}\|\Deform_\gamma \bW\|_{L_2(T)}^2 + h^2 \|\bW\|_{H^1_h(\gamma)}^2 
 \\ & \lesssim  \sum_{T \in \T} \|\Deform_\gamma \bW\|_{L_2(T)}^2 + h^2 (\| {\rm Def}_\gamma \bW\|_{L_2(\gamma)}^2+ h^{-1}  \|[\bW]\|_{L_2(\Sigma)}^2+ \|\bW\|_{L_2(\gamma)}^2) 
 \\& \lesssim  \sum_{T \in \T}\|\Deform_\gamma \bW\|_{L_2(T)}^2 + h \int_\Sigma \lbrack \bW \rbrack^2 + \varepsilon \|\bW\|_{L_2(\gamma)}^2.
 \end{aligned}
\end{align} 

 Thus for some $C>0$ and any $\delta>0$,
 \begin{align*}
 \begin{aligned}
\frac{1}{2} j(\bW, \bW) & \ge -C\delta (\|\Deform_{\gamma,h} \bW\|_{L_2(\gamma)}^2+ \varepsilon \|\bW\|_{L_2(\gamma)}^2) 
\\ & + \left ( \frac{\rho}{h} - \frac{1}{2\delta h}- C \delta h \right ) \| \lbrack \bW \rbrack \|_{L_2(\Sigma)}^2.
\end{aligned}
 \end{align*}
Taking $C\delta$ sufficiently small and subsequently $\rho$ sufficiently large and adding this inequality to $a_\epsilon (\bW, \bW)$ then yields the coercivity estimate \eqref{coercivity}.   The  continuity estimate \eqref{e:cont_dg} follows similarly, and the stability estimate \eqref{stability} then follows by standard arguments.  
\end{proof}


\ARD{We now prove energy error estimates that are optimal so long as $\varepsilon \lesssim h$.  Below we use a subscript ``$h$'' to denote an elementwise differential operator, e.g., ${\rm Def}_{\gamma, h}$.  }


\begin{lemma}
Assume that $\gamma$ is of class $C^4$, $\bef \in \mathcal K^\perp$, and \ARD{$h^2 \leq \varepsilon \leq 1$}. 
Then the solutions $\bu$ resp. $\bu^\varepsilon$ defined by \eqref{e:velocity_div_free} resp. \eqref{e:valocity_divergence_eps} satisfy 
 \begin{gather}
  \label{e:FEM_approx} 
  \vvvert\bu^\varepsilon - \bU^\varepsilon \vvvert_{1,h,\varepsilon} \ARD{+h \|D_{\gamma, h} {\rm Def}_{\gamma, h} (\bu^\varepsilon - \bU^\varepsilon)\|_{L_2(\gamma)} }
  \lesssim  h  \| \bef \|_{L_2(\gamma)} .
  \\
  \label{eq:final_approx}
  \vvvert\bu - \bU^\varepsilon \vvvert_{1,h,\varepsilon}
  \lesssim
( h+\varepsilon) \| \bef\|_{L_2(\gamma)}.
 \end{gather}
\end{lemma}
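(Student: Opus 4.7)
The plan is a Strang-type argument for interior penalty methods, made especially clean by the fact that $\VT[\T]$ is defined intrinsically on $\gamma$ via the Piola transform, so no geometric consistency error intervenes. The first step is to establish exact Galerkin orthogonality. By \eqref{e:reg_u_eps}, $\bu^\varepsilon \in \HT[2]$ satisfies the strong form of \eqref{pert_weakform}. Testing against an arbitrary $\bV \in \VT[\T]$ and integrating by parts element by element yields
\begin{gather*}
  a_\varepsilon(\bu^\varepsilon, \bV) = \int_\gamma \bef \cdot \bV + 2\int_\Sigma \{\Deform_\gamma \bu^\varepsilon \cdot \bn\} \cdot [\bV],
\end{gather*}
since (i) the volume pressure term $\int_\gamma p^\varepsilon \divergence_\gamma \bV$ vanishes by divergence-freeness of $\bV$, (ii) the edge pressure contributions $\int_\Sigma p^\varepsilon(\bV^+\cdot \bn^+ + \bV^-\cdot \bn^-)$ vanish by the normal continuity built into $\VT[\T]$, and (iii) $[\Deform_\gamma \bu^\varepsilon \cdot \bn] = 0$ by continuity of $\Deform_\gamma \bu^\varepsilon$. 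Because $[\bu^\varepsilon]=0$, this rewrites as $a_\varepsilon(\bu^\varepsilon, \bV) + j(\bu^\varepsilon, \bV) = \int_\gamma \bef\cdot \bV$, and subtracting \eqref{FE_def} gives exact Galerkin orthogonality
\begin{gather*}
  a_\varepsilon(\bu^\varepsilon - \bU^\varepsilon, \bV) + j(\bu^\varepsilon - \bU^\varepsilon, \bV) = 0, \qquad \forall \bV \in \VT[\T].
\end{gather*}

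I would then split $\bu^\varepsilon - \bU^\varepsilon = \eta + \xi$ with $\eta := \bu^\varepsilon - I_\T \bu^\varepsilon$ and $\xi := I_\T \bu^\varepsilon - \bU^\varepsilon$. The commuting property \eqref{commute} ensures $\xi \in \VT[\T]$, so coercivity \eqref{coercivity} together with orthogonality yields $\vvvert \xi \vvvert_{1,h,\varepsilon}^2 \lesssim -[a_\varepsilon(\eta,\xi) + j(\eta,\xi)]$. The volume terms in $a_\varepsilon(\eta,\xi)$ are controlled by Cauchy--Schwarz via Lemma~\ref{lem:approx_properties} (with $\varepsilon \le 1$ to handle the $L_2$ piece). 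The edge terms in $j(\eta,\xi)$ require a scaled trace inequality: for the $\{\Deform_\gamma \eta \cdot \bn\}\cdot[\xi]$ contribution one obtains
\begin{gather*}
 h \sum_{T \in \T} \int_{\partial T} |\Deform_\gamma \eta|^2 \lesssim \sum_T \bigl(\|\Deform_\gamma \eta\|_{L_2(T)}^2 + h^2 |\eta|_{H^2(T)}^2\bigr) \lesssim h^2 \|\bu^\varepsilon\|_{H^2(\gamma)}^2,
\end{gather*}
where the $|I_\T \bu^\varepsilon|_{H^2(T)}$ component of $|\eta|_{H^2(T)}$ is controlled via \eqref{H2_bound}, while the symmetric $\{\Deform_\gamma \xi \cdot \bn\}\cdot[\eta]$ piece is handled using \eqref{e:handle_jump}. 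Combined with \eqref{e:reg_u_eps} and a triangle inequality, these bounds produce the $\vvvert \cdot\vvvert_{1,h,\varepsilon}$ portion of \eqref{e:FEM_approx}. The additional $h\|D_{\gamma,h}\Deform_{\gamma,h}(\bu^\varepsilon - \bU^\varepsilon)\|_{L_2(\gamma)}$ term is split into contributions from $\bu^\varepsilon$ (directly $\lesssim h\|\bu^\varepsilon\|_{H^2}$), from $I_\T \bu^\varepsilon$ (via \eqref{H2_bound} and approximation), and from $\xi$ (via \eqref{H2_bound} followed by the discrete Korn inequality \eqref{eq:disc_Korn}, with the Korn $L_2$ contribution absorbed using $h\|\xi\|_{L_2(\gamma)} \lesssim h\varepsilon^{-1/2}\vvvert \xi\vvvert_{1,h,\varepsilon} \lesssim \vvvert \xi\vvvert_{1,h,\varepsilon}$ under the assumption $\varepsilon \ge h^2$).

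Finally, \eqref{eq:final_approx} follows from the triangle inequality $\vvvert \bu - \bU^\varepsilon\vvvert_{1,h,\varepsilon} \le \vvvert \bu-\bu^\varepsilon\vvvert_{1,h,\varepsilon} + \vvvert \bu^\varepsilon - \bU^\varepsilon\vvvert_{1,h,\varepsilon}$: since $\bu - \bu^\varepsilon \in \HT$ is continuous across edges, all jump terms vanish and the first summand reduces to $\|\Deform_\gamma(\bu - \bu^\varepsilon)\|_{L_2(\gamma)} + \varepsilon^{1/2}\|\bu-\bu^\varepsilon\|_{L_2(\gamma)}$, both bounded by a constant multiple of $\varepsilon \|\bef\|_{L_2(\gamma)}$ via the consistency estimate \eqref{eq:consistency} (using $\varepsilon \le 1$). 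The main obstacle is the careful derivation of Galerkin orthogonality: the divergence-conforming normal-continuity of $\VT[\T]$ is essential for the pressure term to drop out cleanly, and uniform $\HT[2]$-regularity of $\bu^\varepsilon$ in $\varepsilon$ is essential for the subsequent bounds. A secondary technical difficulty is the estimation of $j(\eta,\xi)$: since $\eta$ lies outside the finite element space and $I_\T \bu^\varepsilon$ lacks global $H^2$-regularity, one cannot appeal directly to \eqref{e:cont_dg} but must instead combine Lemma~\ref{lem:approx_properties} with the piecewise bound \eqref{H2_bound}.
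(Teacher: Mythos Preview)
Your proposal is correct and follows essentially the same route as the paper: Galerkin orthogonality, the splitting $\bu^\varepsilon-\bU^\varepsilon=(\bu^\varepsilon-I_\T\bu^\varepsilon)+(I_\T\bu^\varepsilon-\bU^\varepsilon)$ with the commuting property \eqref{commute} ensuring the latter lies in $\VT[\T]$, coercivity plus continuity on the error equation, and then the second-order term via \eqref{H2_bound}, discrete Korn, and $\varepsilon\ge h^2$. The only organizational difference is that the paper packages the continuity bound once and for all as \eqref{dg_continuity} (valid for any piecewise-$H^2$ arguments) and then invokes it, whereas you estimate $a_\varepsilon(\eta,\xi)+j(\eta,\xi)$ term by term; your explicit derivation of Galerkin orthogonality (which the paper simply asserts as \eqref{e:GALO}) is a helpful addition.
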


\begin{proof}  \ARD{Let $\bv, \bw \in  H({\rm div}; \gamma)$ be elementwise in ${\bf HT}^1(\gamma) \cap [H^2(\gamma)]^3$, as for ${\bf u}^\varepsilon$ and any $\bV \in {\bf XT}(\T)$.  Computing as in \eqref{j_calc}, the  first line of \eqref{eq130} yields for such $\bv, \bw$ that
\begin{align*}
\begin{aligned}
j(\bv, \bw) 
& \lesssim \left ( \|{\rm Def}_{\gamma,h} \bv\|_{L_2(\gamma)} + h \|D_{\gamma,h} {\rm Def}_{\gamma, h} \bv\|_{L_2(\gamma)} + h^{-1/2} \rho^{1/2} \|[ \bv] \|_{L_2(\gamma)}  \right )
\\ & ~~~~   \cdot \left ( \|{\rm Def}_{\gamma,h} \bw\|_{L_2(\gamma)} + h \|D_{\gamma,h} {\rm Def}_{\gamma, h} \bw\|_{L_2(\gamma)} + h^{-1/2} \rho^{1/2} \|[ \bw] \|_{L_2(\gamma)}  \right ) .
\end{aligned}
\end{align*}
Thus 
\begin{align}
\label{dg_continuity} 
\begin{aligned}
a_\varepsilon(\bv, \bw) + j(\bv, \bw) & \lesssim  \left ( \vvvert \bv \vvvert_{1,h,\varepsilon} + h \|D_{\gamma, h} {\rm Def}_{\gamma, h} \bv\|_{L_2(\gamma)} \right ) 
\\&  ~~~~\cdot \left (  \vvvert \bw \vvvert_{1,h,\varepsilon} + h \|D_{\gamma, h} {\rm Def}_{\gamma, h} \bw\|_{L_2(\gamma)} \right ).
\end{aligned}
\end{align}
We next note that 
\begin{equation}\label{e:GALO}
a_\varepsilon(\bu^\varepsilon -\bU^\varepsilon, \bV) + j(\bu^\varepsilon-\bU^\varepsilon, \bV)=0, \qquad \forall \bV \in {\bf VT}(\T),
\end{equation} 
and that $\bU^\varepsilon, I_\T \bU^\varepsilon \in {\bf VT}(\T)$ since by the commuting diagram property \eqref{commute}, ${\rm div}_\gamma I_\T \bu^\varepsilon = \pi_{\mathbb{V}} {\rm div}_\gamma \bu^\varepsilon=0$.  Applying the coercivity estimate \eqref{coercivity}, \eqref{e:GALO} with $\bV =  I_\T \bu^\varepsilon - \bU^\varepsilon$, \eqref{dg_continuity}, and \eqref{H2_bound} while recalling that $h \lesssim \sqrt{\varepsilon}$ yields
\begin{align}
\begin{aligned}
& \vvvert  I_\T \bu^\varepsilon- \bU^\varepsilon \vvvert_{1,h,\varepsilon}^2   \lesssim a_\varepsilon(I_\T \bu^\varepsilon -\bu^\varepsilon, I_\T \bu^\varepsilon -\bU^\varepsilon) +  j(I_\T \bu^\varepsilon -\bu^\varepsilon, I_\T \bu^\varepsilon -\bU^\varepsilon) 
\\ & \lesssim \left ( \vvvert I_\T \bu^\varepsilon -\bu^\varepsilon \vvvert_{1,h, \varepsilon} + h\|D_{\gamma, h} {\rm Def}_{\gamma, h} (I_\T \bu^\varepsilon -\bu^\varepsilon) \|_{L_2(\gamma)}\right ) \vvvert I_\T \bu^\varepsilon - \bU^\varepsilon \vvvert_{1,h,\varepsilon}.
\end{aligned}
\end{align} 
The desired estimate for the first term in \eqref{e:FEM_approx} follows upon dividing through by $\vvvert  I_\T \bu^\varepsilon- \bU^\varepsilon \vvvert_{1,h,\varepsilon}$, applying the approximation estimate \eqref{eq:approx}, and using $H^2$ regularity \eqref{e:reg_u_eps}.  To estimate the second term in \eqref{e:FEM_approx}, we insert $\pm I_\T \bu^\varepsilon$, apply the triangle inequality, and apply \eqref{eq:approx} and \eqref{e:reg_u_eps} to find 
\begin{align*}
h \|D_{\gamma, h} {\rm Def}_{\gamma, h} (\bu^\varepsilon - \bU^\varepsilon)\|_{L_2(\gamma)} \lesssim h \|{\bf f}\|_{L_2(\gamma)} + h \|D_{\gamma, h} {\rm Def}_{\gamma, h} (I_\T \bu^\varepsilon - \bU^\varepsilon)\|_{L_2(\gamma)}. 
\end{align*}
To complete the proof of \eqref{e:FEM_approx} we employ \eqref{H2_bound}, the discrete Korn inequality \eqref{eq:disc_Korn}, the already established bound for the first term in \eqref{e:FEM_approx}, approximation bounds, and $\varepsilon \ge h^2$ to find
\begin{align*}
 h \|D_{\gamma, h} {\rm Def}_{\gamma, h} (I_\T \bu^\varepsilon - \bU^\varepsilon)\|_{L_2(\gamma)} \lesssim  h \varepsilon^{-1/2} \vvvert I_\T \bu^\varepsilon -\bU^\varepsilon \vvvert_{1,h, \varepsilon} \lesssim h \|{\bf f}\|_{L_2(\gamma)}.  
\end{align*}
Relation \eqref{eq:final_approx} follows from \eqref{e:FEM_approx} and \eqref{eq:consistency}.}
\end{proof}

\section{$L_2$ Error Estimates}
\label{sec:L2}

In this section we prove $L_2$ error estimates.  The discrete approximation $\bU^\varepsilon$ does not belong to $\mathcal K^\perp$.
However, we can estimate the $L^{2}$ norm of its orthogonal component
$\| P_{\mathcal K} \bU^\epsilon \|_{L_2(\gamma)}$, 
where we recall that $P_{\mathcal K}$ is the $L_2$ projection onto $\mathcal K$.
This is the subject of the next result. 

\begin{lemma}[Killing Fields Estimates]
    \label{lem:kfields}  \ARD{Assume that $h^2 \le \varepsilon \le 1$.  Then }
    \begin{align}
        \label{eq:PUK}
        \| P_{\mathcal K} \bU^\varepsilon \|_{L_2(\gamma)}
        \lesssim 
         \varepsilon^{-1} h^2 
        \| \bef \|_{L_2(\gamma)}.
    \end{align}
\end{lemma}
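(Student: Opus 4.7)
The plan is to apply an Aubin--Nitsche-style duality argument with a dual problem whose solution is explicit thanks to the Killing-field structure. Since $\bu^\varepsilon \perp_{L_2} \mathcal K$ by \eqref{e:ueps_orth}, setting $\bg := P_{\mathcal K}\bU^\varepsilon \in \mathcal K$ gives
\begin{equation*}
\|P_{\mathcal K}\bU^\varepsilon\|_{L_2(\gamma)}^2 = (\bU^\varepsilon, \bg)_{L_2(\gamma)} = (\bU^\varepsilon-\bu^\varepsilon, \bg)_{L_2(\gamma)},
\end{equation*}
so it suffices to bound the right-hand side by $(h^2/\varepsilon)\,\|\bef\|_{L_2(\gamma)}\|\bg\|_{L_2(\gamma)}$.

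The crucial observation is that the adjoint problem can be solved explicitly by $\bz := \bg/\varepsilon$. Indeed, $\bg \in \mathcal K \subset {\bf HT}^2(\gamma)$ by \eqref{e:killing_reg}, so $\Deform_\gamma \bz = 0$, $\divergence_\gamma \bz = 0$, and $[\bz]=0$ on $\Sigma$; inspecting the definitions of $a_\varepsilon$ and $j$ shows that every term involving $\Deform_\gamma \bz$ or $[\bz]$ vanishes, yielding
\begin{equation*}
a_\varepsilon(\bv,\bz) + j(\bv,\bz) = \varepsilon(\bv,\bz)_{L_2(\gamma)} = (\bv,\bg)_{L_2(\gamma)}
\end{equation*}
for every elementwise smooth broken vector field $\bv$. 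Applying this with $\bv = \bU^\varepsilon-\bu^\varepsilon$ and using Galerkin orthogonality \eqref{e:GALO} tested against $I_\T \bz$ --- which lies in $\VT(\T)$ because $\divergence_\gamma I_\T \bz = \pi_\V \divergence_\gamma \bz = 0$ by the commuting diagram \eqref{commute} --- I obtain
\begin{equation*}
(\bU^\varepsilon-\bu^\varepsilon,\bg) = a_\varepsilon(\bU^\varepsilon-\bu^\varepsilon, \bz - I_\T\bz) + j(\bU^\varepsilon-\bu^\varepsilon, \bz - I_\T\bz).
\end{equation*}

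To close, I would invoke the DG continuity \eqref{dg_continuity}, the energy error estimate \eqref{e:FEM_approx}, and the interpolation bounds in \eqref{eq:approx}. Because $\mathcal K$ is finite-dimensional and contained in ${\bf HT}^2(\gamma)$, we have $\|\bg\|_{H^2(\gamma)} \lesssim \|\bg\|_{L_2(\gamma)}$, and hence $\|\bz\|_{H^2(\gamma)} \lesssim \varepsilon^{-1}\|\bg\|_{L_2(\gamma)}$. Combining yields
\begin{equation*}
|(\bU^\varepsilon-\bu^\varepsilon,\bg)| \lesssim h\|\bef\|_{L_2(\gamma)} \cdot h\|\bz\|_{H^2(\gamma)} \lesssim \frac{h^2}{\varepsilon}\,\|\bef\|_{L_2(\gamma)}\,\|\bg\|_{L_2(\gamma)},
\end{equation*}
and dividing through by $\|\bg\|_{L_2(\gamma)} = \|P_{\mathcal K}\bU^\varepsilon\|_{L_2(\gamma)}$ gives \eqref{eq:PUK}.

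The main conceptual step --- and the only nontrivial one --- is recognizing that the dual solution is literally the rescaled Killing field $\bg/\varepsilon$: the $\varepsilon^{-1}$ factor in \eqref{eq:PUK} precisely reflects the fact that $a_\varepsilon$ restricts to $\varepsilon\,(\cdot,\cdot)_{L_2(\gamma)}$ on $\mathcal K$, so inverting it against a Killing-field source costs a factor $\varepsilon^{-1}$. The rest is bookkeeping: the contribution of the $\varepsilon\|\bz - I_\T \bz\|_{L_2(\gamma)}^2$ term in $\vvvert\cdot\vvvert_{1,h,\varepsilon}$ and of the $h\|D_{\gamma,h}\Deform_{\gamma,h}(\bz - I_\T \bz)\|_{L_2(\gamma)}$ correction in \eqref{dg_continuity} are both subsumed by $h\|\bz\|_{H^2(\gamma)}$ under the hypothesis $\varepsilon \le 1$.
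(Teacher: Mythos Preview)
Your proof is correct and is essentially the same argument as the paper's, repackaged as an Aubin--Nitsche duality. The paper writes $(\bu^\varepsilon-\bU^\varepsilon,\bv)=(\bu^\varepsilon-\bU^\varepsilon,\bv-I_\T\bv)+(\bu^\varepsilon-\bU^\varepsilon,I_\T\bv)$ for $\bv\in\mathcal K$ and then uses Galerkin orthogonality together with the Killing-field identity $a(\bu^\varepsilon-\bU^\varepsilon,\bv)+j(\bu^\varepsilon-\bU^\varepsilon,\bv)=0$ to rewrite the second term as $\varepsilon^{-1}\big(a(\bu^\varepsilon-\bU^\varepsilon,\bv-I_\T\bv)+j(\bu^\varepsilon-\bU^\varepsilon,\bv-I_\T\bv)\big)$; expanding your identity with $\bz=\bv/\varepsilon$ yields exactly these two pieces, so the algebra is identical. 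Your framing (the dual solution is explicitly $\bg/\varepsilon$ because $a_\varepsilon$ restricts to $\varepsilon(\cdot,\cdot)_{L_2}$ on $\mathcal K$) is a bit more conceptual and avoids treating the two terms separately, at the cost of not isolating the slightly sharper $\varepsilon^{-1/2}h^2$ bound the paper gets for the $(\bu^\varepsilon-\bU^\varepsilon,\bv-I_\T\bv)$ piece---which is immaterial here since $\varepsilon^{-1}h^2$ dominates anyway.
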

\begin{proof}
    From the definition of $P_{\mathcal K}$ and the property $\bu^\varepsilon \in \mathcal K^\perp$, we obtain
    \begin{gather*}
        \| P_{\mathcal K} \bU^\epsilon \|_{L_2(\gamma)} 
        = 
        \sup_{\bv \in \mathcal K, \| \bv \|_{L_2(\gamma)}=1} ( \bU^\varepsilon , \bv) 
        = 
        \sup_{\bv \in \mathcal K, \| \bv \|_{L_2(\gamma)}=1} (\bu^\epsilon-\bU^\varepsilon,  \bv)
        .
    \end{gather*}
    Recalling the BDM interpolant $I_\T$, we write for $\bv \in \mathcal K$
    \begin{equation}\label{e:Killing_proj_to_estim}
    (\bu^\epsilon-\bU^\varepsilon,\bv) = (\bu^\epsilon-\bU^\varepsilon, \bv-I_\T \bv) + (\bu^\epsilon-\bU^\varepsilon, I_\T \bv).
    \end{equation}
    We estimate both terms in the right hand side of the above expression separately and start with the second one.
    Notice that because $\bv$ is a Killing field there holds
    \begin{gather*}
        a(\bu^\varepsilon - \bU^\varepsilon,  \bv) 
        +j(\bu^\varepsilon-\bU^\varepsilon,  \bv)
        = 0.
    \end{gather*}
    Whence, using the definitions of $\bu^\varepsilon$ and $\bU^\varepsilon$  we obtain
    \begin{gather*}
        \varepsilon (\bu^\epsilon-\bU^\varepsilon, I_\T \bv)
        = 
        - a(\bu^\varepsilon - \bU^\varepsilon,  I_\T \bv - \bv) 
        -j(\bu^\varepsilon-\bU^\varepsilon, I_\T \bv- \bv).
    \end{gather*}
    The continuity estimate \ARD{\eqref{dg_continuity}} in conjunction with the finite element approximation estimate \eqref{e:FEM_approx}, the regularity property of the Killing fields \eqref{e:killing_reg}, and the approximation property \eqref{eq:approx} of the BDM interpolant now imply
    \begin{gather*}
        \varepsilon (\bu^\varepsilon-\bU^\varepsilon,I_\T \bv) 
        \lesssim 
        h^2 \| \bef \|_{L_2(\gamma)} \| \bv \|_{H^2(\gamma)}
    \end{gather*}
    and so
    \begin{gather*}
    (\bu^\varepsilon-\bU^\varepsilon, I_\T \bv)  
    \lesssim 
    \varepsilon^{-1} h^2 \| \bef \|_{L_2(\gamma)} \| \bv \|_{H^2(\gamma)}.
    \end{gather*}
    For the first term on the right hand side of \eqref{e:Killing_proj_to_estim}, the finite element approximation estimate \eqref{e:FEM_approx} and the approximation property \eqref{eq:approx} of the BDM interpolant implies as well that
    \begin{align*}
    \begin{aligned}
        (\bu^\epsilon-\bU^\varepsilon, \bv-I_\T \bv) &\le \varepsilon^{-1/2} ( \varepsilon^{1/2} \|\bu^\varepsilon-\bU^\varepsilon\|_{L_2(\gamma)}) \|\bv - I_\T \bv \|_{L_2(\gamma)} 
\\ &         \lesssim  
        \varepsilon^{-\frac 1 2} h^2 \| \bef \|_{L_2(\gamma)} \| \bv \|_{H^2(\gamma)}
        .
        \end{aligned}
    \end{align*}
    Gathering the above two estimates in \eqref{e:Killing_proj_to_estim} and noting that
    \begin{gather*}
        \| \bv \|_{H^2(\gamma)} \lesssim \| \bv \|_{L_2(\gamma)} \qquad \forall \bv \in \mathcal K
    \end{gather*}
    thanks to the norm equivalence property on a finite dimensional space ($\textrm{dim}(\mathcal K) \leq 3$) yield the desired result.
\end{proof}

We now derive an $L_2$-error estimates using a standard duality argument.
\begin{lemma}  \ARD{Assume that $h^2 \le \varepsilon \le 1$.  Then}
\label{lem:basicL2}
    \begin{align}
        \label{eq:basicL2}
        \| \bu^\varepsilon - (\bU^\varepsilon - P_\mathcal K \bU^\varepsilon) \|_{L_2(\gamma)} \lesssim h^2 \| \bef \|_{L_2(\gamma)}.
    \end{align}
\end{lemma}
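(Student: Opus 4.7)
My plan is to run a standard Aubin--Nitsche duality argument adapted to the DG setting, with a small twist coming from the Killing-field projection.

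First I would set $\bg := \bu^\varepsilon - (\bU^\varepsilon - P_\mathcal K \bU^\varepsilon)$ and observe that $\bg \in \mathcal{K}^\perp$: we have $\bu^\varepsilon \in \mathcal{K}^\perp$ by \eqref{e:ueps_orth} and $(I-P_{\mathcal K})\bU^\varepsilon \in \mathcal{K}^\perp$ by definition of the $L_2$ projection. Therefore $(\bg, P_{\mathcal K} \bU^\varepsilon) = 0$, so
\[
 \|\bg\|_{L_2(\gamma)}^2 = (\bg,\bg) = (\bg, \bu^\varepsilon - \bU^\varepsilon).
\]
Next I introduce the dual problem with right-hand side $\bg \in \mathcal{K}^\perp$: find $\bz \in \VT \cap \mathcal{K}^\perp$ such that $a_\varepsilon(\bz,\bw) = (\bg,\bw)$ for all $\bw \in \VT$. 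By \eqref{e:elliptic_eps_generic}, $\|\bz\|_{H^2(\gamma)} \lesssim \|\bg\|_{L_2(\gamma)}$, uniformly in $\varepsilon$.

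Second, I would use $\bz \in H^2(\gamma) \cap \HT$ to rewrite both inner products in DG-consistent form. Since $[\bz]=0$ on every edge, the jump/consistency integrals in $j$ vanish whenever $\bz$ occupies one slot and the other slot lies in $\HT$, giving $j(\bz,\bu^\varepsilon)=0$; on the other hand, testing the dual problem against $\bU^\varepsilon$ (which is piecewise regular but discontinuous) and using that $\bz$ is the strong solution of a Stokes problem yields $(\bg,\bU^\varepsilon) = a_\varepsilon(\bz,\bU^\varepsilon) + j(\bz,\bU^\varepsilon)$. Combining,
\[
 \|\bg\|_{L_2(\gamma)}^2 = a_\varepsilon(\bz, \bu^\varepsilon - \bU^\varepsilon) + j(\bz, \bu^\varepsilon - \bU^\varepsilon).
\]

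The third step is Galerkin orthogonality. The commuting diagram \eqref{commute} gives $\divergence_\gamma I_\T \bz = \pi_\V \divergence_\gamma \bz = 0$, so $I_\T \bz \in \VT[\T]$; hence \eqref{e:GALO} applied with test $I_\T \bz$ together with symmetry of $a_\varepsilon$ and $j$ allows me to replace $\bz$ by $\bz - I_\T \bz$:
\[
 \|\bg\|_{L_2(\gamma)}^2 = a_\varepsilon(\bu^\varepsilon - \bU^\varepsilon, \bz - I_\T \bz) + j(\bu^\varepsilon - \bU^\varepsilon, \bz - I_\T \bz).
\]
Now I apply the enriched DG continuity \eqref{dg_continuity}, the energy error bound \eqref{e:FEM_approx} (which controls both $\vvvert \bu^\varepsilon - \bU^\varepsilon \vvvert_{1,h,\varepsilon}$ and $h\|D_{\gamma,h}\Deform_{\gamma,h}(\bu^\varepsilon - \bU^\varepsilon)\|_{L_2(\gamma)}$ by $h\|\bef\|_{L_2(\gamma)}$), and the interpolation estimate \eqref{eq:approx} applied to $\bz$ (using the $h^2|I_\T\bz|_{H^2}$ piece to handle the second-derivative term and $\varepsilon\le 1$ to absorb the $\varepsilon\|\bz - I_\T\bz\|_{L_2}^2$ contribution). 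This yields $\vvvert \bz - I_\T \bz \vvvert_{1,h,\varepsilon} + h\|D_{\gamma,h}\Deform_{\gamma,h}(\bz - I_\T \bz)\|_{L_2(\gamma)} \lesssim h\|\bz\|_{H^2(\gamma)} \lesssim h\|\bg\|_{L_2(\gamma)}$. Putting everything together gives $\|\bg\|_{L_2(\gamma)}^2 \lesssim h^2 \|\bef\|_{L_2(\gamma)}\|\bg\|_{L_2(\gamma)}$, from which the claim follows after dividing.

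The main obstacle is bookkeeping the DG consistency: one must be careful that the dual solution $\bz$ is smooth enough that the jump terms involving it vanish pointwise, and simultaneously that $I_\T \bz$ is genuinely admissible as a test function in $\VT[\T]$, which is precisely what the commuting diagram property delivers. The rest is an optimal-order duality computation mirroring the Euclidean BDM analysis.
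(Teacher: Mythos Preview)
Your argument is correct and follows essentially the same Aubin--Nitsche duality route as the paper: both introduce the $a_\varepsilon$-dual problem with data $\bg=\bu^\varepsilon-(\bU^\varepsilon-P_{\mathcal K}\bU^\varepsilon)\in\mathcal K^\perp$, use $H^2$ regularity~\eqref{e:elliptic_eps_generic}, integrate by parts the strong form elementwise to obtain the DG-consistent identity, drop the $P_{\mathcal K}\bU^\varepsilon$ contribution via orthogonality, invoke Galerkin orthogonality~\eqref{e:GALO} with $I_\T\bz\in\VT[\T]$ (justified by the commuting diagram~\eqref{commute}), and conclude via the continuity bound~\eqref{dg_continuity}, the energy error estimate~\eqref{e:FEM_approx}, and the interpolation bound~\eqref{eq:approx}. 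The only cosmetic difference is that you eliminate the Killing-field part \emph{before} writing down the dual problem, whereas the paper keeps it in the right-hand side of the dual and removes it after integration by parts; the computations are otherwise identical.
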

\begin{proof}
    Define $\bz^{\varepsilon} \in \VT \cap \mathcal K^{\perp}$ as the solution to the elliptic problem
    \begin{gather*}
        a_\varepsilon(\bz^{\varepsilon},\bv)
        = 
        \int_{\gamma}(\bu^\varepsilon - (\bU^\varepsilon - P_\mathcal K \bU^\varepsilon) ) \cdot \bv,
        \qquad
        \forall \bv \in \VT.
    \end{gather*}
    Integrating by parts the strong form
    \begin{gather*}
        -2 \Pi \divergence_\gamma \Deform_\gamma \bu^\varepsilon + \varepsilon \bu^\epsilon 
        = 
        \bu^\varepsilon - (\bU^\varepsilon - P_\mathcal K \bU^\varepsilon)
    \end{gather*}
    (in the sense of distributions) of this relationship yields
    \begin{gather*}
        \begin{aligned}
            \| \bu^\varepsilon & - (\bU^\varepsilon - P_\mathcal K \bU^\varepsilon) \|_{L_2(\gamma)}^2 
            \\ & = a_\varepsilon(\bz^\varepsilon,  \bu^\varepsilon - (\bU^\varepsilon - P_\mathcal K \bU^\varepsilon)) 
            -2 \int_{\Sigma } 
            \{ \Deform_\gamma \bz^\varepsilon \cdot \eta \} \cdot \lbrack \bu^\varepsilon-(\bU^\varepsilon-P_\mathcal{K} \bU^\varepsilon) \rbrack
            \\ & =  a_\varepsilon(\bz^\varepsilon,  \bu^\varepsilon - \bU^\varepsilon) + j(\bz^\varepsilon,  \bu^\varepsilon - \bU^\varepsilon),
        \end{aligned}
    \end{gather*}
    where we used that $P_{\mathcal K} \bU^\varepsilon \in \mathcal K$ and $\bz^\varepsilon \in \HT[2] \cap \mathcal{K}^\perp$ to derive the last equality.
    Now, Galerkin orthogonality together with the continuity estimate \ARD{\eqref{dg_continuity} and the error estimate \eqref{e:FEM_approx}} imply that 
    \begin{align*}
        \| \bu^\varepsilon - ( \bU^\varepsilon - P_\mathcal K \bU^\varepsilon) \|_{L_2(\gamma)}^2 & = \ARD{a_\varepsilon(\bz^\varepsilon -I_\T \bz^\varepsilon,  \bu^\varepsilon - \bU^\varepsilon) + j(\bz^\varepsilon-I_\T \bz^\varepsilon,  \bu^\varepsilon - \bU^\varepsilon)  }
  \\  &\ARD{\lesssim h^2 \|\bu^\varepsilon\|_{H^2(\gamma)} \|\bz^\varepsilon\|_{H^2(\gamma)}.  }
    \end{align*}
   The elliptic regularity estimates \eqref{e:reg_generic} and \eqref{e:elliptic_eps_generic} then yield
    \begin{gather*}
        \| \bu^\varepsilon - (\bU^\varepsilon - P_\mathcal K \bU^\varepsilon )\|_{L_2(\gamma)} \lesssim h^2 \| \bef \|_{L_2(\gamma)},
    \end{gather*}
    which is the desired estimate.
\end{proof}

We now derive two direct corollaries.
\begin{corollary}
\ARD{Assume that $h^2 \le \varepsilon \le 1$.  Then}
    \label{cor:L2}
    \begin{equation}
    \label{L2eps}
    \| \bu^\varepsilon - \bU^\varepsilon \|_{L_2(\gamma)} 
    \lesssim 
\varepsilon^{-1} h^2   
    \| \bef \|_{L_2(\gamma)}.
    \end{equation}
    In particular, when $\varepsilon = h^\alpha$ $(1 \le \alpha \le 2)$ we have
    \begin{equation}
    \label{subopt_L2}
    \| \bu - \bU^\varepsilon \|_{L_2(\gamma)}  \lesssim h^{2-\alpha} \| \bef \|_{L_2(\gamma)}.
    \end{equation}
\end{corollary}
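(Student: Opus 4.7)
The plan is to obtain both inequalities as direct consequences of the two preceding lemmas combined with the $O(\varepsilon)$ consistency estimate \eqref{eq:consistency}, via two applications of the triangle inequality. No new energy arguments or duality calculations are required.

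For \eqref{L2eps}, I would insert $\pm P_\mathcal{K} \bU^\varepsilon$ and apply the triangle inequality to write
\begin{equation*}
    \| \bu^\varepsilon - \bU^\varepsilon \|_{L_2(\gamma)}
    \le
    \| \bu^\varepsilon - (\bU^\varepsilon - P_\mathcal{K} \bU^\varepsilon) \|_{L_2(\gamma)}
    +
    \| P_\mathcal{K} \bU^\varepsilon \|_{L_2(\gamma)}.
\end{equation*}
Lemma~\ref{lem:basicL2} bounds the first term by $h^2 \| \bef \|_{L_2(\gamma)}$ and Lemma~\ref{lem:kfields} bounds the second by $\varepsilon^{-1} h^2 \| \bef \|_{L_2(\gamma)}$. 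Since $\varepsilon \le 1$ implies $h^2 \le \varepsilon^{-1} h^2$, the second term is the dominant one, and \eqref{L2eps} follows.

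For \eqref{subopt_L2}, I would combine \eqref{L2eps} with the continuous consistency estimate \eqref{eq:consistency}, namely $\| \bu - \bu^\varepsilon \|_{L_2(\gamma)} \lesssim \varepsilon \| \bef \|_{L_2(\gamma)}$, via another triangle inequality:
\begin{equation*}
    \| \bu - \bU^\varepsilon \|_{L_2(\gamma)}
    \lesssim
    (\varepsilon + \varepsilon^{-1} h^2) \| \bef \|_{L_2(\gamma)}.
\end{equation*}
Substituting $\varepsilon = h^\alpha$ with $1 \le \alpha \le 2$ produces $h^\alpha + h^{2-\alpha}$. In this range $2-\alpha \le 1 \le \alpha$, so for $h \le 1$ the exponent $2-\alpha$ dominates and the bound reduces to $h^{2-\alpha} \| \bef \|_{L_2(\gamma)}$.

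I do not anticipate any real obstacle here, since the work has been done in the previous two lemmas. The only point worth flagging is the constraint $\varepsilon \ge h^2$: it is already hypothesized in both input lemmas, and it is precisely what makes the Killing-field contamination $\varepsilon^{-1} h^2$ bounded (so that $\bU^\varepsilon$ itself, without the $P_\mathcal{K}$ correction, already converges). The resulting trade-off between the $\varepsilon$ and $\varepsilon^{-1} h^2$ terms is optimized at $\varepsilon \simeq h$, which is the origin of the suboptimal rate $h^{2-\alpha}$ away from $\alpha = 1$.
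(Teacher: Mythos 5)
Your proof is correct and is essentially the paper's own argument: the paper also obtains \eqref{L2eps} by combining Lemma~\ref{lem:kfields} (for $\|P_{\mathcal K}\bU^\varepsilon\|_{L_2}$) with Lemma~\ref{lem:basicL2} via the triangle inequality, and then derives \eqref{subopt_L2} by adding the $O(\varepsilon)$ consistency estimate \eqref{eq:consistency} and substituting $\varepsilon=h^\alpha$.
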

\begin{proof}
    Simply combine  \eqref{eq:PUK} and \eqref{eq:basicL2} to get \eqref{L2eps}.
    For \eqref{subopt_L2}, we  write
    \begin{equation*}
        \begin{split}
            \| \bu - \bU^\varepsilon \|_{L_2(\gamma)} 
            &\leq
            \| \bu - \bu^\varepsilon \|_{L_2(\gamma)} + \| \bu^\varepsilon - \bU^\varepsilon \|_{L_2(\gamma)} 
            \\&\lesssim 
            \varepsilon + ( h^2 + \varepsilon^{-1} h^2 + \varepsilon^{-\frac12} h^2 )  \| \bef \|_{L_2(\gamma)}
            \lesssim 
            h^{2-\alpha} \| \bef \|_{L_2(\gamma)}.
        \end{split}
    \end{equation*}
   \end{proof}

The above corollary establishes that the approximation converges 
but with suboptimal order when $\varepsilon=h^{2-\alpha}$ for $1 \le \alpha <2$, 
but indicates that $\bU^{h^2}$ does not converge to $\bu$ in $L_2$
(which we confirm computationally below).  
However, combining \eqref{eq:consistency} with \eqref{eq:basicL2} yields an optimally convergent result 
when $\varepsilon=h^2$ and Killing fields are removed.
\begin{corollary}
    \label{cor:L2_second}
    If $\varepsilon=h^2$, then
    \begin{align}
        \label{opt_L2} 
        \| \bu - (\bU^\varepsilon - P_{\mathcal K}\bU^\varepsilon ) \|_{L_2(\gamma)} \lesssim  (\varepsilon + h^2) \| \bef \|_{L_2(\gamma)} \lesssim h^2 \| \bef \|_{L_2(\gamma)}.
    \end{align}
\end{corollary}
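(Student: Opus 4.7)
The plan is to obtain this bound by a simple triangle inequality that splits the error between $\bu$ and the filtered discrete solution $\bU^\varepsilon - P_{\mathcal K}\bU^\varepsilon$ into a continuous perturbation error and a discrete approximation error, each of which has already been established in the preceding lemmas.

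First I would insert $\pm \bu^\varepsilon$ and apply the triangle inequality to write
\begin{equation*}
    \| \bu - (\bU^\varepsilon - P_{\mathcal K}\bU^\varepsilon) \|_{L_2(\gamma)}
    \leq
    \| \bu - \bu^\varepsilon \|_{L_2(\gamma)}
    +
    \| \bu^\varepsilon - (\bU^\varepsilon - P_{\mathcal K}\bU^\varepsilon) \|_{L_2(\gamma)}.
\end{equation*}
The first term on the right is the continuous consistency error from the $\varepsilon$-perturbation, which is controlled by the consistency estimate \eqref{eq:consistency}, yielding a bound of order $\varepsilon \|\bef\|_{L_2(\gamma)}$. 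The second term is precisely the quantity estimated in Lemma~\ref{lem:basicL2} (estimate \eqref{eq:basicL2}), which gives a bound of order $h^2 \|\bef\|_{L_2(\gamma)}$; note the hypothesis $h^2 \leq \varepsilon \leq 1$ required there is satisfied since $\varepsilon = h^2$.

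Combining these two bounds yields $\lesssim (\varepsilon + h^2) \|\bef\|_{L_2(\gamma)}$, which under the choice $\varepsilon = h^2$ collapses to the claimed $\lesssim h^2 \|\bef\|_{L_2(\gamma)}$. There is no real obstacle here: all the substantive work has been done in \eqref{eq:consistency} (which handles the continuous perturbation by Korn plus a coercivity argument) and in Lemma~\ref{lem:basicL2} (which used a duality argument together with Galerkin orthogonality and $H^2$ regularity of the dual problem). The corollary is thus essentially a bookkeeping step showing that the two error contributions balance exactly at $\varepsilon = h^2$, which is the motivation for that specific scaling of the perturbation parameter.
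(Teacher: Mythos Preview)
Your argument is correct and matches the paper's approach exactly: the paper simply states that the result follows by combining \eqref{eq:consistency} with \eqref{eq:basicL2}, which is precisely the triangle-inequality split you carry out.
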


We finally prove error estimates for the full discrete $H^1$ norm, that is, 
for the total as opposed to the symmetric gradient.
\begin{corollary}
    If $\varepsilon=h$, then
    \begin{gather*}
        \left(\sum_{T\in \T} \|\nabla_{\gamma} (\bu-\bU^\varepsilon)\|_{L_2(T)}^2 \right)^{1/2}\lesssim h \|\bef\|_{L_2(\gamma)}.
    \end{gather*}
    If instead $\varepsilon = h^2$, then
    \begin{gather*}
        \left(\sum_{T\in \T} \|\nabla_{\gamma} (\bu-(\bU^\varepsilon-P_{\mathcal{K}}\bU^\varepsilon))\|_{L_2(T)}^2 \right)^{1/2}\lesssim h \|\bef\|_{L_2(\gamma)}.
    \end{gather*}
\end{corollary}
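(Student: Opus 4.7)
The plan is to reduce both estimates to the discrete Korn inequality \eqref{eq:disc_Korn}, which holds for functions in ${\bf XT}(\T)$, by splitting the error into a ${\bf XT}(\T)$ piece plus approximation terms. For $\varepsilon = h$ I would write $\bu - \bU^\varepsilon = (\bu - I_\T \bu) + (I_\T \bu - \bU^\varepsilon)$; the first piece is $O(h)$ in the broken gradient norm by \eqref{eq:approx} and \eqref{e:reg_u}. Since $I_\T \bu - \bU^\varepsilon \in {\bf XT}(\T)$, Lemma~\ref{lem:disc_Korn} controls its piecewise $H^1$ norm by $\vvvert \cdot \vvvert_{1,h} + \|\cdot\|_{L_2(\gamma)}$, and both of these are $O(h)$ after reinserting $\bu$ and applying \eqref{eq:approx}, \eqref{eq:final_approx}, and \eqref{subopt_L2} with $\alpha = 1$.

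For $\varepsilon = h^2$ the difficulty is that $\bU^\varepsilon - P_{\mathcal K}\bU^\varepsilon$ need not lie in ${\bf XT}(\T)$, since the Killing field $P_{\mathcal K}\bU^\varepsilon$ is generally not a BDM function. I would instead employ the decomposition
\begin{equation*}
\bu - (\bU^\varepsilon - P_{\mathcal K}\bU^\varepsilon) = (\bu - I_\T \bu) + (I_\T \bu - \bU^\varepsilon + I_\T P_{\mathcal K}\bU^\varepsilon) + (P_{\mathcal K}\bU^\varepsilon - I_\T P_{\mathcal K}\bU^\varepsilon),
\end{equation*}
in which only the central summand must be handled via discrete Korn. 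The outer summands are approximation errors of order $h$: the first via \eqref{eq:approx} and \eqref{e:reg_u}; the third via \eqref{eq:approx}, the regularity \eqref{e:killing_reg}, equivalence of norms on the finite-dimensional space $\mathcal K$, and the bound $\|P_{\mathcal K}\bU^\varepsilon\|_{L_2(\gamma)} \lesssim \varepsilon^{-1}h^2 \|\bef\|_{L_2(\gamma)} \lesssim \|\bef\|_{L_2(\gamma)}$ from \eqref{eq:PUK}.

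The central summand lies in ${\bf XT}(\T)$, so Lemma~\ref{lem:disc_Korn} applies. Because $\Deform_\gamma P_{\mathcal K}\bU^\varepsilon = 0$, the contribution of $I_\T P_{\mathcal K}\bU^\varepsilon$ to the deformation part of $\vvvert\cdot\vvvert_{1,h}$ reduces to the BDM approximation error $\Deform_\gamma(I_\T P_{\mathcal K}\bU^\varepsilon - P_{\mathcal K}\bU^\varepsilon)$, which is $O(h)$; the other pieces are handled by \eqref{eq:approx} and \eqref{eq:final_approx}. For the jump term I would use $[\bu]=0$ and $[P_{\mathcal K}\bU^\varepsilon]=0$, so scaled trace inequalities on the interpolation residuals together with $h^{-1/2}\|[\bu-\bU^\varepsilon]\|_{L_2(\Sigma)} \le \rho^{-1/2}\vvvert \bu-\bU^\varepsilon\vvvert_{1,h} \lesssim h$ yield $O(h)$. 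For the $L_2$ part I would rewrite the central summand as $(I_\T \bu - \bu) + \bigl[\bu - (\bU^\varepsilon - P_{\mathcal K}\bU^\varepsilon)\bigr] + (I_\T P_{\mathcal K}\bU^\varepsilon - P_{\mathcal K}\bU^\varepsilon)$ and combine the optimal estimate \eqref{opt_L2} with BDM and Killing-field approximation bounds to get $O(h^2)$. The main obstacle is exactly this $\varepsilon = h^2$ case: naive individual bounds give $\|\nabla_\gamma P_{\mathcal K}\bU^\varepsilon\|_{L_2(\gamma)} = O(1)$ because $\|P_{\mathcal K}\bU^\varepsilon\|_{L_2(\gamma)}$ is only bounded, so there must be a cancellation between this large term and $\nabla_{\gamma,h}(\bu - \bU^\varepsilon)$. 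The decomposition above is designed precisely to expose and transfer this cancellation to the optimal $L_2$ bound \eqref{opt_L2}.
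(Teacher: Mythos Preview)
Your proposal is correct and follows essentially the same route as the paper's proof: split off the interpolation error, apply the discrete Korn inequality \eqref{eq:disc_Korn} to the ${\bf XT}(\T)$ remainder, and reinsert $\bu$ (and, in the second case, $P_{\mathcal K}\bU^\varepsilon$) to invoke \eqref{eq:approx}, \eqref{eq:final_approx}, \eqref{subopt_L2}, \eqref{opt_L2}, \eqref{eq:PUK}, and norm equivalence on the finite-dimensional space $\mathcal K$. Your three-term decomposition for $\varepsilon=h^2$ is exactly the paper's ``add and subtract $I_\T(\bu\pm P_{\mathcal K}\bU^\varepsilon)$'' written out, and your explicit discussion of why one must route the argument through \eqref{opt_L2} rather than bounding $\nabla_\gamma P_{\mathcal K}\bU^\varepsilon$ directly makes transparent a point the paper leaves implicit.
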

\begin{proof} 
    In order to prove the first estimate, on $T\in \T$ we write
    $\nabla_{\gamma}(\bu-\bU^\varepsilon)=\nabla_{\gamma} (\bu-I_\T \bu)+ \nabla_{\gamma} (I_\T \bu -\bU^\varepsilon)$. 
    The first of these terms may be directly bounded using the approximation estimate \eqref{eq:approx}.  
    The second may be bounded by applying the Korn-type inequality \eqref{eq:disc_Korn}, 
    adding and subtracting $\bu$ to the result, 
    and then applying the approximation bound \eqref{eq:approx} along with the energy and $L_2$ error bounds \eqref{eq:final_approx} and \eqref{subopt_L2}.  

    To prove the second estimate, 
    we add and subtract $I_\T(\bu -P_{\mathcal{K}} \bU^\varepsilon)$ and proceed essentially as above.  
    The only substantial difference occurs when bounding various norms of $(1-I_\T) P_{\mathcal{K}}\bU^\varepsilon $; 
    here we observe that all norms are equivalent on the finite dimensional space $\mathcal{K}$ and so all norms of $P_{\mathcal{K}}\bU^\varepsilon$ over $\gamma$ are bounded by $\|P_{\mathcal{K}}\bU^\varepsilon\|_{L_2(\gamma)}$, which is in turn bounded by $\|f\|_{L_2(\gamma)}$ via \eqref{eq:PUK}.  
\end{proof}

\section{Filtering out Killing fields}
\label{sec:killing}
Comparing \eqref{opt_L2} and \eqref{subopt_L2}, 
we see that it is possible to obtain a convergent approximation in $L_2$ 
without paying special attention to the Killing fields, 
but it is necessary to explicitly filter them out in order to obtain an optimal $O(h^2)$ convergence rate in the $L_2$ norm or to obtain convergence of the full (unsymmetrized) gradient when $\varepsilon=h^2$. 
If $\gamma$ is known exactly it is sometimes possible to exactly identify the Killing fields.  Such is the case for example for the sphere or for ellipsoids, which we consider in our numerical experiments below.  
However, for more complicated surfaces it might be desirable to automatically identify Killing fields
rather than attempting to compute them manually. 
In addition, it commonly occurs in practice that $\gamma$ is not known exactly.  
Rather one can only assume access to an approximation or family of approximations to $\gamma$.  
For this reason we seek a computational method for identifying Killing fields.  

Our strategy is to approximate the Killing fields via a Stokes eigenvalue problem
and then compute a discrete approximation to $P_{\mathcal{K}} \bU^\varepsilon$ in \eqref{opt_L2}.  
The problem of identifying the Killing fields is however subtle.  
Recall that $0 \le {\rm dim}(\mathcal{K}) \le 3$, with the dimension depending on symmetries of $\gamma$.  
The dimension of $\mathcal{K}$ is thus not stable under small perturbations of $\gamma$ which create or break symmetries.  
This is of particular significance in the context of surface FEM, 
since one typically approximates PDE solutions on a continuous surface $\gamma$ via approximations on a ``nearby'' discrete surface $\Gamma$, which generally does not have the same symmetries as $\gamma$. 
In addition, the nonconforming nature of our FEM causes difficulties when attempting to identify Killing fields.  
Killing fields correspond to zero eigenvalues of a Stokes eigenvalue problem.  
However, there is no reason to expect that the corresponding discrete eigenvalue problem has {\it any} zero eigenvalues, 
whether we compute on $\gamma$ as we have assumed above or a nearby discrete surface $\Gamma$ as is often done in practice.  
Because a ``small'' discrete eigenvalue may correspond to either a Killing field or a mode of a near-symmetric surface, 
in general it is not possible to immediately identify from a discrete eigenvalue approximation which modes correspond to Killing fields and which do not.   \ARD{Identifying ${\rm dim}(\mathcal{K})$ numerically is thus an ill-posed problem, so we take a more circuitous route to filtering out Killing fields and the results that we achieve are relatively modest.  We show that we can choose an approximation $\mathcal{K}_h$ to $\mathcal{K}$ such that $\bU^\varepsilon-P_{\mathcal{K}_h}\bU^\varepsilon$ is a reasonable approximation to $\bu$ in the $L_2$ norm even if ${\rm dim}(\mathcal{K})$ has not been correctly identified.  Asymptotically $\mathcal{K}_h$ is guaranteed to correctly approximate $\mathcal{K}$, but it is not known to the user whether this is the case in a given computation.
}

\ARD{We also address the practical implications of removing Killing fields from the discrete solution and from the right hand side ${\bf f}$.   Computational solutions of free boundary problems involving surface Stokes fluid models indicate that rotational symmetries may be destroyed \cite{BGN16} or produced \cite{BGN17} as the surface evolves, depending on the physical situation assumed.   The implication of the latter computation is that Killing fields may be present in physically relevant computations even if symmetries are not initially present.  If rotational modes are present in a solution to the corresponding stationary Stokes problem, they might obscure other relevant features of the equilibrium flow and so it is desirable to remove them.  It may be preferable in some cases for a user to manually determine ${\rm dim}(\mathcal{K})$ by using eigenvalue data (as described below) and possibly visual inspection of symmetries, but our  algorithm provides more robust guarantees of solution quality.  These strategies could also be used in combination.  }

\ARD{We also discuss the possibility that the forcing function ${\bf f} \not\perp \mathcal{K}$.  The extent to which this situation is physically relevant is unclear.  If a Killing mode is present in a forcing function ${\bf f}$ driving a fluid flow on a radially symmetric surface, the surface would rotate with increasing velocity in the direction of the Killing mode.  The condition ${\bf f} \perp \mathcal{K}$ thus appears to be physically as well as mathematically reasonable.  Nonetheless we briefly address below the question of approximating ${\bf f}-P_{\mathcal{K}} {\bf f}$ in finite element approximations to the stationary surface Stokes problem.  
}

\subsection{A Stokes eigenvalue problem}  

Consider first the Stokes eigenvalue problem:  Find $(\bu, \lambda) \in \VT \times \overline{\mathbb{R}^+}$ such that
\begin{align}
\label{cont_eigen}
a(\bu, \bv) = \lambda (\bu, \bv), ~~\bv \in  \VT.  
\end{align} 
There exists then a sequence of eigenvalues $0 \le \lambda_1 \le \lambda_2 \leq \dots$ 
increasing to $\infty$, and corresponding $L_2$-orthonormal basis of eigenfunctions $\{\bu_i\}_{i=1}^\infty$.
It is easily seen that $\mathcal{K}$ is the space of eigenfunctions corresponding to $\lambda = 0$.  

We similarly consider the discrete eigenvalue problem:  
Find $(\bU, \Lambda) \in \VT[\T] \times \mathbb{R}$ such that
\begin{align}
\label{disc_eigen}
a(\bU, \bV)+j(\bU, \bV) = \Lambda (\bU, \bV), ~~\bV \in \VT[\mathcal{T}].
\end{align} 
We denote the discrete eigenvalues by $\{\Lambda_i\}_{i=1}^N$, 
and an orthonormal basis of eigenvectors by $\{\bU_i\}_{i=1}^N$.  
Note that we have given no guarantee that this system is positive, so there may in principle be negative eigenvalues.  
Also, it is possible for both the continuous and discrete problems to pose an equivalent eigenvalue problem based on the full Stokes system \eqref{weakform}.  
We do so in computational practice due to the difficulty of identifying the divergence-free subspace $\VT[\T]$.  
For theoretical purposes, however, the reduced systems here are more convenient.  
We refer to \cite{GK18} for a similar method for the Euclidean Stokes eigenvalue problem.  



Below we give a robust method which asymptotically filters out Killing fields and thus yields an asymptotic $O(h^2)$ convergence rate in $L_2$.
The basic assumption for our method is that $|\lambda_i-\Lambda_i| \lesssim h^2$ and $\|\bu_i-\bU_i\|_{L_2(\gamma)} \lesssim h^2$.  
We prove this assumption for the discrete approximation \eqref{disc_eigen} posed over the continuous surface $\gamma$.  
Our framework is however still valid in the more practically relevant case where the discrete problem is posed over a nearby discrete surface $\Gamma$, 
provided that the additional geometric consistency error is also of order $h^2$.   
Numerical evidence presented below indicates that this is the case when $\Gamma$ is taken to be a polyhedron with triangular faces as above.

We first prove convergence of eigenvalues and eigenvectors.
\begin{theorem}
    Let $\lambda_i$ be an eigenvalue and  
    let $P_{\lambda_i}$ denote the projection onto 
       the eigenspace spanned by the eigenfunctions 
    corresponding to $\lambda_i$.  \ARD{Let $(\Lambda_i, \bU_i)$ be a corresponding discrete eigenpair.}  
    Then
    \begin{align}
        \label{eigen:errors}
        \|\bU-P_{\lambda} \bU\|_{L_2(\gamma)} 
        + |\lambda-\Lambda| \lesssim h^2.
    \end{align}
\end{theorem}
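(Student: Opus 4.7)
The plan is to invoke the operator-theoretic (Babuška--Osborn) framework for spectral approximation, suitably adapted to the nonconforming interior penalty setting. First I would recast both eigenvalue problems in terms of solution operators. Let $T : L_2(\gamma) \to L_2(\gamma)$ send $\bef$ to the solution $\bu \in \VT \cap \mathcal K^\perp$ of the unperturbed source problem \eqref{e:velocity_div_free} with right-hand side $P_{\mathcal K^\perp}\bef$, extended by zero on $\mathcal K$; the nonzero continuous eigenvalues $\lambda_i$ of \eqref{cont_eigen} are then exactly the reciprocals of the nonzero eigenvalues of $T$ and the eigenspaces coincide. Construct an analogous discrete operator $T_h : L_2(\gamma) \to {\bf XT}(\T)$ for \eqref{disc_eigen}, working either modulo the small-eigenvalue discrete subspace or with a mild regularization so that $T_h$ is well defined. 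The theorem will follow from the classical spectral convergence machinery once I establish
\[
\|T-T_h\|_{\mathcal L(L_2(\gamma))} \lesssim h^2.
\]

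This operator-norm bound is precisely what the $L_2$ analysis of Sections~\ref{sec:fem}--\ref{sec:L2} delivers. Given $\bef \in L_2(\gamma)$, decompose $\bef = P_{\mathcal K^\perp}\bef + P_\mathcal K \bef$; both $T$ and (by design) $T_h$ annihilate the Killing component, while on $\mathcal K^\perp$ one combines Lemma~\ref{lem:basicL2} (applied with an appropriate small $\varepsilon$) to bound $\|T\bef-(T_h\bef-P_\mathcal K T_h\bef)\|_{L_2(\gamma)}$ by $h^2\|\bef\|_{L_2(\gamma)}$, with Lemma~\ref{lem:kfields} to bound $\|P_\mathcal K T_h\bef\|_{L_2(\gamma)}$ by the same. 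For the eigenvalue estimate, I would then use the classical identity
\[
\lambda - \Lambda = \frac{a(\bu-\bU,\bu-\bU) + j(\bu-\bU,\bu-\bU) - \lambda\|\bu-\bU\|_{L_2(\gamma)}^2}{\|\bU\|_{L_2(\gamma)}^2},
\]
where $\bu$ is a normalized representative in the eigenspace of $\lambda$ close to $\bU$ (for example the $L_2$-projection of $\bU$ onto that eigenspace). The numerator's energy part is $O(h^2)$ by adapting the energy estimate \eqref{e:FEM_approx}, and the $L_2$ term is $O(h^4)$ by the operator-norm bound, yielding the desired $O(h^2)$. Multidimensional eigenspaces are handled by the usual projection argument applied componentwise on the Jordan structure of $T_h$ near the cluster associated to $\lambda$.

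The main obstacle will be simultaneously handling the nonconformity of the method and the Killing-field kernel. Because $\VT[\T] \not\subset \VT$, the standard conforming Babuška--Osborn theorem cannot be applied verbatim; one must use its version for nonconforming or DG approximations, in which convergence of spectral projections is obtained through $L_2$ operator-norm convergence of the solution operators rather than through energy-norm convergence -- fortuitously, this is the regime in which our doubled $O(h^2)$ $L_2$ rate is available. In parallel, $a+j$ is only semidefinite, with a small approximate kernel of its own corresponding to near-Killing modes, and $T_h$ does not preserve $\mathcal K^\perp$ (Lemma~\ref{lem:kfields}); one must therefore carefully pair each continuous $\lambda_i$ with a genuine discrete partner $\Lambda_i$. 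Asymptotically this pairing is unambiguous by the standard gap-based spectral perturbation argument applied to $T$ and $T_h$, but making the pairing explicit and showing that small discrete eigenvalues truly track the Killing kernel is the one genuinely delicate point of the proof.
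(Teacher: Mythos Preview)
Your overall strategy---reduce to Babu\v{s}ka--Osborn via $L_2$ operator-norm convergence of solution operators---is exactly what the paper does, but you have missed the single idea that makes the argument short. Rather than defining $T$ via the unperturbed problem and projecting out $\mathcal K$ (which forces you to confront the discrete near-kernel and the ``genuinely delicate point'' you flag at the end), the paper simply \emph{shifts} both problems by the identity: work with $a_1(\cdot,\cdot)=a(\cdot,\cdot)+(\cdot,\cdot)$ on the continuous side and $a_1+j$ on the discrete side. This shift moves every eigenvalue by exactly~$1$ and leaves eigenfunctions unchanged, so proving the theorem for the shifted problems is equivalent. But now both bilinear forms are coercive (Lemma~\ref{lem:well-posed} with $\varepsilon=1$), so $T$ and $T_h$ are well defined without any projection or regularization gymnastics, $T$ is compact on the divergence-free subspace $\bX\subset H(\divergence_\gamma;\gamma)$ equipped with the $L_2$ norm, and the required bound $\|(T-T_h)\bu\|_{L_2}\lesssim h^2\|\bu\|_{L_2}$ is literally \eqref{L2eps} with $\varepsilon=1$ (extended to data with a Killing component by the trivial observation $T\bu_{\mathcal K}=\bu_{\mathcal K}$ and finite-dimensionality of $\mathcal K$). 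The nonconforming Babu\v{s}ka--Osborn machinery then applies directly.

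Two specific issues with your route as written. First, your phrase ``applied with an appropriate small $\varepsilon$'' is backwards: Lemma~\ref{lem:kfields} gives $\|P_{\mathcal K}T_h\bef\|\lesssim \varepsilon^{-1}h^2$, so you need $\varepsilon$ bounded \emph{below}, not small---and once you take $\varepsilon\sim 1$ you are doing the paper's shift. Second, your $T_h$ is not actually specified: the unregularized form $a+j$ is not known to be coercive on $\VT[\T]$ (coercivity in Lemma~\ref{lem:well-posed} requires $\varepsilon\ge h^2$), and ``working modulo the small-eigenvalue discrete subspace'' is circular because identifying that subspace is part of what you are trying to prove. The shift $\varepsilon=1$ dissolves both problems at once.
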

\begin{proof}
    We first reformulate the eigenvalue problem in order to ensure that the discrete system is definite.    
    Consider the problem:  Find $(\bu, \tilde{\lambda}) \in \VT \times \mathbb{R}^+$ such that 
    \begin{align}
        \label{cont_eigen_stable}
        \begin{aligned}
            a_1(\bu, \bv)  = \tilde{\lambda} (\bu, \bv), ~~\forall \bv \in \VT.
        \end{aligned}
    \end{align} 
    It is easy to see that that eigenpairs of \eqref{cont_eigen_stable} correspond to eigenpairs of \eqref{cont_eigen}
    with the identification $\lambda=\tilde{\lambda}-1$.  
    The eigenfunctions are precisely the same.  
    Similarly let $(\bU, \tilde{\Lambda}) \in \VT[\T] \times \mathbb{R}^+$ satisfy
    \begin{align}
        \label{disc_eigen_stable}
        a_1(\bU, \bV)+ j(\bU, \bV)=\tilde{\Lambda} (\bU, \bV), ~~~\forall \bv \in \VT[\T].
    \end{align}
    As with the continuous problem, we have $\Lambda=\tilde{\Lambda}-1$. 

    We next apply the Babu\v{s}ka-Osborn theory to obtain convergence of the eigenvalue problem.  Let first $\bX$ be the divergence-free subspace of $H(\divergence_\gamma; \gamma)$.  The native $H(\divergence)$ norm for this space reduces to the $L_2$ norm, and it is easy to show that $\bX$ is closed in $H(\divergence_\gamma; \gamma)$.  Let $T:\bX \rightarrow \bX$ be given by  
    \begin{align}
    \label{T_def}
        a_1(T \bu, \bv)= (\bu, \bv), ~~~\bv \in \VT.
    \end{align}
    This problem is well-posed, and because the norm on $\bX$ reduces to the $L_2$ norm we also easily obtain compactness of $T$ by standard arguments.  In particular, the image of a bounded set in $\bX$ under $T$ is bounded in $\HT$ and thus precompact in $L_2$, and therefore in $\bX$ since the norms are equivalent and $\bX$ is closed.
    The corresponding discrete solution operator $T_h:\bX \rightarrow \VT[\T] \subset \bX$ is given by 
    \begin{align}
        a_1(T_h \bU, \bV)+j(T_h \bU, \bV)=(\bU, \bV), ~~~\bV \in \VT[\T].
    \end{align}
    This problem is well-posed according to Lemma~\ref{lem:well-posed}.  \ARD{It remains to prove the estimate}
    \begin{align}
    \label{tminusth}
        \|(T-T_h) \bu\|_{\bX} \lesssim h^2 \|\bu\|_{\bX}, ~~~ \bu \in \bX,
    \end{align}
\ARD{which is \eqref{L2eps} (with $\varepsilon=1$) in the case $\bu \perp \mathcal{K}$.  In order to prove this estimate in the general case we first prove an $H^2$ regularity bound.  First set $\bu=\bu_{\mathcal{K}} + \bu_\perp$ with $\bu_{\mathcal{K}} \in \mathcal{K}$ and $\bu_\perp \perp \mathcal{K}$.  $\|T \bu_\perp\| \lesssim \|\bu_\perp\|_{L_2(\gamma)}$ follows from \eqref{e:reg_u_eps}.  We also may easily compute that $T \bu_\mathcal{K}=\bu_{\mathcal{K}}$ by testing \eqref{T_def} first with $\bv \in \mathcal{K}$ and then with $\bv \in \mathcal{K}_\perp$.  Then $\|T \bu_{\mathcal{K}}\|_{H^2(\gamma)} = \|\bu_{\mathcal{K}}\|_{H^2(\gamma)} \lesssim \|\bu_{\mathcal{K}}\|_{L_2(\gamma)}$, since $\mathcal{K}$ is a finite-dimensional space.  Putting these estimates together yields  $\|T\bu \|_{H^2(\gamma)} \lesssim \|\bu \|_{L_2(\gamma)}$.   By combining this estimate with coercivity of $a_1(\cdot, \cdot)$ over ${\bf VT}(\T)$  (cf. Lemma~\ref{lem:well-posed}) and approximation properties, we obtain the estimate \eqref{tminusth} using standard techniques as in Lemma \ref{lem:basicL2}.  }

    With this framework in hand, the Babu\v{s}ka-Osborn theory yields the desired result; we refer to \cite[Section 11]{Bof10} for details of this theory in the context of nonconforming approximations such as those we consider here.  
\end{proof}


\subsection{Filtering out Killing fields}
\label{subsec:Kfilt}

We denote by $\bu_i$ resp. $\bU_i$ the eigenfunction associated with
$0 \leq \lambda_1 \leq \lambda_2 \leq \dots$ resp. $0 \leq \Lambda_1 \leq \Lambda_2 \leq \dots$.
We also recall that $\bU^\varepsilon$ (where we shall take either $\varepsilon=h^\alpha$ with $1 \le \alpha <2$ or $\varepsilon=h^2$) and the eigenfunctions $\{\bU_i\}_{i=1}^N$, with $N:= \textrm{dim}(\VT[\T])$, are all strongly divergence free.  Using \eqref{FE_def} along with the discrete eigenvalue problem \eqref{disc_eigen} thus yields
\begin{align}
    \label{eigenexpansion}
    \bU^\varepsilon=\sum_{i=1}^N \frac{({\bf f}, \bU_i)}{\Lambda_i+\epsilon} \bU_i.
\end{align}

We recall that $\mathcal{K}={\rm span}_{1 \le i \le j_{\mathcal{K}}} \{\bu_i\}$, where $0 \le j_{\mathcal{K}} \le 3$ depends on $\gamma$.  
Let $J_\mathcal{K}=\{\{1\}, \{2\}, \{3\}, \{1,2\}, \{1,3\}, \{2,3\}, \{1,2,3\}\}$, with elements denoted by $J$.  
We may thus also write that $\mathcal{K}={\rm span}_{i \in J} \{\bu_i\}$ for some $J \in J_{\mathcal{K}}$.  
Our goal is to use the discrete eigenvalues $\{\Lambda_i\}_{i=1}^N$ 
in order to choose which discrete eigenfunctions correspond to Killing fields, 
and which we should therefore remove from our solution.  
Given $J \in J_{\mathcal{K}}$, let $\mathcal{K}_{J,h}={\rm span}_{j \in J} \{\bU_j\}$.  
Let $1 \le \alpha <2$.  We prove below that choosing a set of putative discrete Killing fields $\mathcal{K}_{J,h}$ so that $\|\bU^{h^\alpha}-(\bU^{h^2}-P_{\mathcal{K}_{J,h}} \bU^{h^2})\|_{L_2(\gamma)}$ is minimized leads an approximation $\bU^{h^2}-P_{\mathcal{K}_h} \bU^{h^2}$ which asymptotically converges with order $h^2$ to $\bu$. 
We start by providing a characterization of this minimal set.

\begin{lemma}\label{l:J*}
    Assume that $ h\leq 1$ and $1 \le \alpha <2$. We have that
    \begin{equation}\label{e:J*}
    J^*_h:= \left\lbrace i \ : \  \Lambda_i \leq h^\alpha -2h^2 \right\rbrace
    \end{equation}
    satisfies
    \begin{align}
    \label{eq120}
        J^*_h= \mathop{\mathrm{argmin}}_{J  \in J_{\mathcal{K}}} \|\bU^{h^\alpha}-(\bU^{h^2}-P_{\mathcal{K}_{J,h}}\bU^{h^2})\|_{L_2(\gamma)}.
    \end{align}
    Moreover, there exists $h_0>0$ such that for $h \leq h_0$ 
    we have that $\bu_i$ is a Killing field if and only if $i \in J^*_h$.
\end{lemma}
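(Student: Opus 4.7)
The plan is to exploit the discrete eigenexpansion \eqref{eigenexpansion} to turn the minimization into an explicit termwise comparison. Plugging the formulas for $\bU^{h^\alpha}$ and $\bU^{h^2}$ into the objective and expanding in the $L_{2}$-orthonormal eigenbasis $\{\bU_{i}\}$, one obtains
\begin{equation*}
\|\bU^{h^\alpha}-(\bU^{h^2}-P_{\mathcal{K}_{J,h}}\bU^{h^2})\|_{L_{2}(\gamma)}^{2}
=
\sum_{i \in J}\frac{c_{i}}{(\Lambda_{i}+h^\alpha)^{2}}
+
\sum_{i \notin J}\frac{(h^\alpha-h^{2})^{2}\, c_{i}}{(\Lambda_{i}+h^\alpha)^{2}(\Lambda_{i}+h^{2})^{2}},
\end{equation*}
where $c_{i} := |({\bf f},\bU_{i})|^{2} \ge 0$. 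Because the two sums are decoupled over the index set, the minimum is attained by deciding each $i$ independently: index $i$ should lie in $J$ precisely when its in-$J$ weight does not exceed its out-of-$J$ weight.

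A direct manipulation shows
\begin{equation*}
\frac{1}{(\Lambda_{i}+h^\alpha)^{2}} \le \frac{(h^\alpha-h^{2})^{2}}{(\Lambda_{i}+h^\alpha)^{2}(\Lambda_{i}+h^{2})^{2}}
\quad\Longleftrightarrow\quad
(\Lambda_{i}+h^{2})^{2} \le (h^\alpha-h^{2})^{2}.
\end{equation*}
For $h \le 1$ and $\alpha < 2$ both sides are nonnegative and $h^\alpha > h^{2}$, so taking positive square roots yields $\Lambda_{i} \le h^\alpha-2h^{2}$, which is exactly the definition of $J^{*}_{h}$ in \eqref{e:J*}. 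This establishes the argmin characterization \eqref{eq120} modulo the cardinality constraint $J \in J_{\mathcal{K}}$, i.e.\ $J \subseteq \{1,2,3\}$; this is handled in the next paragraph.

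For the identification of $J^{*}_{h}$ with the Killing indices, I would invoke the eigenvalue estimate \eqref{eigen:errors}: $|\lambda_{i}-\Lambda_{i}| \lesssim h^{2}$. If $\bu_{i} \in \mathcal{K}$, then $\lambda_{i}=0$ and therefore $\Lambda_{i} \lesssim h^{2}$; since $\alpha < 2$ this yields $\Lambda_{i} \le h^\alpha-2h^{2}$ once $h \le h_{0}$ is small. Conversely, if $\bu_{i} \notin \mathcal{K}$, then $\lambda_{i} \ge \lambda^{\star} := \lambda_{\dim(\mathcal{K})+1} > 0$, so $\Lambda_{i} \ge \lambda^{\star}-Ch^{2}$, which strictly exceeds $h^\alpha-2h^{2}$ for all sufficiently small $h$. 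Combined with $\dim(\mathcal{K}) \le 3$, this shows that for $h \le h_{0}$ the set $J^{*}_{h}$ consists precisely of the Killing indices, and in particular $J^{*}_{h} \in J_{\mathcal{K}}$, so that the constrained and unconstrained argmins agree.

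I do not expect a serious obstacle. The decoupled structure of the objective makes the pointwise termwise minimization globally optimal, and the eigenvalue convergence from the preceding theorem cleanly separates the Killing modes ($\Lambda_{i}=O(h^{2})$) from the rest (for which $\Lambda_{i}$ is bounded below by a positive constant up to $O(h^{2})$). The only mild care required is in the arithmetic of the termwise inequality and in verifying that the cardinality of the optimal index set is at most three, both of which are immediate.
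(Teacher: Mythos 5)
Your proof is correct and follows essentially the same route as the paper: expand the objective in the discrete eigenbasis via \eqref{eigenexpansion}, decouple the minimization termwise, reduce the comparison to $\Lambda_i \le h^\alpha - 2h^2$, and invoke the eigenvalue convergence \eqref{eigen:errors} to identify $J^*_h$ with the Killing indices for $h$ small. The one detail the paper addresses explicitly and you pass over is the degenerate case $c_i = (\bef,\bU_i)^2 = 0$, where your termwise equivalence (which cancels $c_i$) is vacuous; there both weights vanish, the choice of including or excluding $i$ is indifferent, and $J^*_h$ is still a minimizer, so this is a minor omission rather than a gap.
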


\begin{proof}
      Let $J \in J_{\mathcal{K}}$.  Using \eqref{eigenexpansion} with $\varepsilon=h^\alpha$ and $\varepsilon=h^2$, we compute
    \begin{align*}
    \begin{aligned}
        \|\bU^{h^\alpha}&-(\bU^{h^2}-P_{\mathcal{K}_{J,h}}\bU^{h^2})\|_{L_2(\gamma)}^2
        \\ & =
        \sum_{i \in J} 
        \frac{(\bef,\bU_i)^2}{(\Lambda_i+h^\alpha)^2}
        + 
        \sum_{1 \le i \le N, i \notin J} 
        \frac{(\bef, \bU_i)^2(h^\alpha-h^2)^2}{(\Lambda_i+h^\alpha)^2(\Lambda_i+h^2)^2}.
   \end{aligned}
    \end{align*}
\ARD{Assuming briefly that $({\bf f}, \bU^i) \neq 0$ for $1 \le i \le 3$,}  the index set $J$ yields the minimal set in \eqref{eq120} if and only if 
    \begin{gather}\label{e:threshold}
        \frac{(\bef,\bU_i)^2}{(\Lambda_i+h^\alpha)^2}
        \le 
        \frac{(\bef, \bU_i)^2(h^\alpha-h^2)^2}{(\Lambda_i+h^\alpha)^2(\Lambda_i+h^2)^2},
    \end{gather}
    for all $i \in J$.  Equivalently, the minimal  set is such that
    \begin{align}
\label{eig_condition}
        \Lambda_i \le h^\alpha-2h^2, ~~i \in J,
    \end{align}
    which proves \ARD{the assertion that \eqref{e:J*} yields \eqref{eq120} when $({\bf f}, \bU^i) \neq 0$, $1 \le i \le 3$.  If $({\bf f}, \bU^i)=0$ for some $1 \le i \le 3$, then inclusion or exclusion of the $i$-th mode in the eigenexpansion does not affect whether \eqref{eq120} holds, so employing \eqref{eig_condition} still yields \eqref{eq120}. }
    The eigenvalue error estimate \eqref{eigen:errors} and the fact that $\bu_i$ is a Killing field $\iff \lambda_i=0$ establish that $\bu_i$ is a Killing field  if and only if $i \in J^*_h$ for $h\leq h_0$.
\end{proof}

We can now derive the optimal convergence of the quantity $\|\bU^{h^\alpha}-(\bU^{h^2}-P_{\mathcal{K}_{J,h}}\bU^{h^2})\|_{L_2(\gamma)}$.
\begin{theorem}
    Let $J^*_h$, $h_0$, and $1 \le \alpha <2$ be as in Lemma~\ref{l:J*} and set $\mathcal K^*_h := \mathcal{K}_{J^*_h,h}$.
    For all $h$ we have
    \begin{align}
        \|\bu-(\bU^{h^2}-P_{\mathcal K^*_h}\bU^{h^2})\|_{L_2(\gamma)} \lesssim h^{2-\alpha}.
    \end{align}
    Furthermore, whenever $h\leq h_0$ we have
    \begin{align}
        \|\bu-(\bU^{h^2}-P_{\mathcal K^*_h}\bU^{h^2})\|_{L_2(\gamma)} \lesssim h^2
        .
    \end{align}
\end{theorem}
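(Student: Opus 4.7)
The plan is to derive both estimates by isolating the contribution of the filtering projector $P_{\mathcal K^*_h}$ via a triangle inequality and then controlling the remainder either by the minimization property of $J^*_h$ proved in Lemma~\ref{l:J*} or by exploiting the explicit eigenfunction expansion \eqref{eigenexpansion}.

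For the preasymptotic bound $\lesssim h^{2-\alpha}$ valid for all $h$, I would insert $\pm\bU^{h^\alpha}$ to obtain
\begin{equation*}
  \|\bu-(\bU^{h^2}-P_{\mathcal K^*_h}\bU^{h^2})\|_{L_2(\gamma)}
  \le \|\bu-\bU^{h^\alpha}\|_{L_2(\gamma)}+\|\bU^{h^\alpha}-(\bU^{h^2}-P_{\mathcal K^*_h}\bU^{h^2})\|_{L_2(\gamma)}.
\end{equation*}
The first summand is $\lesssim h^{2-\alpha}\|\bef\|_{L_2(\gamma)}$ by \eqref{subopt_L2}. For the second, the minimization property of $J^*_h$ established in Lemma~\ref{l:J*} allows me to bound the quantity by its value at any candidate index set; choosing $J_0=\{1,\dots,j_{\mathcal K}\}$ (with $J_0=\emptyset$ when $\mathcal K$ is trivial), the explicit formula from the proof of Lemma~\ref{l:J*} separates the sum by whether $i\in J_0$. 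The key analytical input is that $|(\bef,\bU_i)|\lesssim h^2$ for $i\in J_0$, which follows from $\bef\perp\mathcal K$ combined with the eigenfunction convergence \eqref{eigen:errors}; for $i\notin J_0$, the denominators $\Lambda_i+h^\alpha$ and $\Lambda_i+h^2$ are uniformly bounded below by a positive constant. These two contributions combine to $\sqrt{h^{4-2\alpha}+h^{2\alpha}}\lesssim h^{2-\alpha}$ for $\alpha\in[1,2)$ in the asymptotic regime; the estimate is trivial for $h$ of order one.

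For the asymptotic bound $\lesssim h^2$ at $h\le h_0$, Lemma~\ref{l:J*} identifies $\mathcal K^*_h$ as the span of the $j_{\mathcal K}$ discrete eigenfunctions approximating $\mathcal K$, making it a genuine $O(h^2)$ approximation of $\mathcal K$. I would then split
\begin{equation*}
  \|\bu-(\bU^{h^2}-P_{\mathcal K^*_h}\bU^{h^2})\|_{L_2(\gamma)}
  \le \|\bu-(\bU^{h^2}-P_{\mathcal K}\bU^{h^2})\|_{L_2(\gamma)}
  +\|(P_{\mathcal K}-P_{\mathcal K^*_h})\bU^{h^2}\|_{L_2(\gamma)}.
\end{equation*}
The first term is $\lesssim h^2$ by Corollary~\ref{cor:L2_second}. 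For the second I would combine the operator-norm estimate $\|P_{\mathcal K}-P_{\mathcal K^*_h}\|_{L_2\to L_2}\lesssim h^2$, which follows from \eqref{eigen:errors} and a standard subspace-gap argument using that both spaces have the same dimension $j_{\mathcal K}$, with the refined stability estimate $\|\bU^{h^2}\|_{L_2(\gamma)}\lesssim\|\bef\|_{L_2(\gamma)}$. The latter comes from inserting \eqref{eigenexpansion} into Parseval's identity: for $i\le j_{\mathcal K}$ the factor $|(\bef,\bU_i)|\lesssim h^2$ exactly compensates $(\Lambda_i+h^2)^{-1}\lesssim h^{-2}$, while for $i>j_{\mathcal K}$ the denominator is uniformly bounded below for $h$ small.

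The main obstacle is precisely this refined bound on $\|\bU^{h^2}\|_{L_2(\gamma)}$: the naive stability estimate \eqref{stability} yields only $\|\bU^{h^2}\|_{L_2(\gamma)}\lesssim h^{-1}$, which combined with $\|P_{\mathcal K}-P_{\mathcal K^*_h}\|_{\mathrm{op}}\lesssim h^2$ would yield only $O(h)$ and fall short of the target $h^2$. The required cancellation rests essentially on the standing assumption $\bef\perp\mathcal K$ together with the optimal $h^2$ rate of eigenfunction convergence. A secondary subtlety is that the zero continuous eigenvalue has multiplicity $j_{\mathcal K}$, so the subspace-gap argument must be carried out at the level of projectors onto eigenspaces rather than individual eigenfunctions, which are only defined up to rotations within $\mathcal K$.
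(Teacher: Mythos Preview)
Your proof is correct and follows the same overall architecture as the paper: for the asymptotic bound you split off $(P_{\mathcal K}-P_{\mathcal K^*_h})\bU^{h^2}$ and invoke Corollary~\ref{cor:L2_second} together with eigenfunction convergence; for the preasymptotic bound you insert $\bU^{h^\alpha}$, use the minimization property of $J^*_h$, and control the remainder.

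The one genuine difference is how you estimate $\|\bU^{h^\alpha}-(\bU^{h^2}-P_{\mathcal K_{J_0,h}}\bU^{h^2})\|_{L_2(\gamma)}$ after invoking minimality. The paper inserts $\pm\bu$ again and recycles \eqref{subopt_L2}, \eqref{opt_L2}, and the eigenfunction estimate, whereas you analyze the explicit eigenexpansion term by term using $|(\bef,\bU_i)|\lesssim h^2$ for $i\le j_{\mathcal K}$. Both routes work; yours is more self-contained but requires the smallness assumption on $h$ that you correctly flag (and handle by noting the estimate is trivial otherwise), while the paper's avoids re-deriving the eigenexpansion bounds at the cost of a slightly longer chain of triangle inequalities. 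Your explicit justification of $\|\bU^{h^2}\|_{L_2(\gamma)}\lesssim\|\bef\|_{L_2(\gamma)}$ is a point the paper leaves implicit (it follows equally well by combining \eqref{eq:PUK} and \eqref{eq:basicL2} with the boundedness of $\bu^{h^2}$), and you are right that the naive stability bound \eqref{stability} alone is insufficient here.
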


\begin{proof}
    We start with the case $h \le h_0$.
    From Lemma~\ref{l:J*} we deduce that $\mathcal{K}^*_h$ has the same dimension as $\mathcal{K}$, and per the eigenfunction error estimate  \eqref{eigen:errors} we have 
    \begin{gather*}
       \ARD{ \|P_{\mathcal{K}_h^*} \bU^{h^2} -P_\mathcal{K} \bU^{h^2}\|_{L_2(\gamma)} \lesssim h^2.}
    \end{gather*} 
Combining this observation with the $L_2$ error estimate \eqref{opt_L2} yields that for $h \le h_0$, 
    \begin{gather*}
        \begin{aligned}
            \|\bu&-(\bU^{h^2}-P_{\mathcal{K}^*_h} \bU^{h^2})\|_{L_2(\gamma)} 
            \\ &  \le \|\bu-(\bU^{h^2}-P_{\mathcal{K}} \bU^{h^2})\|_{L_2(\gamma)}+\|P_{\mathcal{K}} \bU^{h^2}-P_{\mathcal{K}^*_h} \bU^{h^2}\|_{L_2(\gamma)} \lesssim h^2.
        \end{aligned}
    \end{gather*}

    For $h > h_0$, we let $J \in J_{\mathcal{K}}$ be the index set corresponding to $\mathcal{K}$.  We then calculate that for $J^*_h$
    \begin{align}
        \begin{aligned}
            \|\bu-(\bU^{h^2}&-P_{\mathcal{K}^*_h}\bU^{h^2})\|_{L_2(\gamma)} 
            \\ &  \le \|\bu-\bU^{h^\alpha}\|_{L_2(\gamma)} + \|\bU^{h^\alpha}-(\bU^{h^2}-P_{\mathcal{K}^*_h}\bU^{h^2})\|_{L_2(\gamma)} 
            \\ & \lesssim h^{2-\alpha} + \|\bU^{h^\alpha}-(\bU^{h^2}-P_{\mathcal{K}_{J,h}}\bU^{h^2})\|_{L_2(\gamma)}
            \\ & \lesssim h^{2-\alpha} + \|\bU^{h^\alpha} - \bu\|_{L_2(\gamma)} + \|\bu - (\bU^{h^2}-P_\mathcal{K} \bU^{h^2})\|_{L_2(\gamma)} 
            \\ &\qquad \qquad+ \|P_\mathcal{K} \bU^{h^2}- P_{\mathcal{K}_{J,h}}\bU^{h^2}\|_{L_2(\gamma)} 
            \\ & \lesssim h^{2-\alpha} + h^{2-\alpha} + h^2 + h^2  \lesssim h.
        \end{aligned}
    \end{align}
    Here we have used \eqref{subopt_L2} and \eqref{opt_L2} along with \eqref{eigen:errors}.
    This ends the proof.  
\end{proof}

Lemma~\ref{l:J*} shows that the discrete eigenfunctions that are filtered out via the minimization problem are precisely those corresponding to eigenvalues satisfying $\Lambda_i \le h^\alpha-2h^2$.  We thus may apply this criterion directly without computing $\bU^{h^\alpha}$.  
Still, we need to compute $\bU^{h^2}$ and the first three eigenvalue/eigenfunction pairs in \eqref{disc_eigen}.  We sum this fact up in the following corollary.

\begin{corollary}
    \label{cor:Kfields}
    Let $J_h:= \{ j \in {1,2,3} \ : \Lambda_j \leq h^\alpha -2h^2\}$, with $1 \le \alpha <2$.  Then 
    \begin{gather*}
        \|\bu-(\bU^{h^2} -P_{\mathcal{K}_{J_h,h}} \bU^{h^2})\|_{L_2(\Gamma)} \lesssim h^{2-\alpha},
    \end{gather*}
    and there exists $h_0$ such that if $h \le h_0$, then 
    \begin{gather*}
        \|\bu-(\bU^{h^2} -P_{\mathcal{K}_{J_h,h}} \bU^{h^2})\|_{L_2(\Gamma)} \lesssim h^2.
    \end{gather*}
\end{corollary}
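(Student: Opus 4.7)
The plan is to reduce this corollary to the preceding theorem by showing that the restricted index set $J_h \subseteq \{1, 2, 3\}$ coincides with the set $J_h^*$ from Lemma~\ref{l:J*} for $h$ sufficiently small. First I would observe that since $\dim(\mathcal{K}) \le 3$, the continuous eigenvalues satisfy $\lambda_i > 0$ for all $i \ge 4$. The eigenvalue error estimate \eqref{eigen:errors} then yields that for $i \ge 4$, $\Lambda_i \ge \lambda_4 - Ch^2$, which exceeds $h^\alpha - 2h^2$ once $h$ is small enough (using $\alpha \ge 1$). Conversely, for each index $i$ with $\lambda_i = 0$, one has $\Lambda_i \le Ch^2 < h^\alpha - 2h^2$ for $h$ small (using $\alpha < 2$). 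Therefore there exists $h_0 > 0$ such that for $h \le h_0$ we have $J_h = J_h^*$, and the bound $\|\bu - (\bU^{h^2} - P_{\mathcal{K}_{J_h,h}} \bU^{h^2})\|_{L_2(\gamma)} \lesssim h^2$ follows immediately from the preceding theorem applied with $\mathcal{K}^*_h = \mathcal{K}_{J_h, h}$.

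For the first inequality (the $h^{2-\alpha}$ bound, valid for all $h$), I would split into cases. For $h \le h_0$, it follows from the $h^2$ bound just established since $h \le 1$. For $h > h_0$, I would use the stability estimate \eqref{stability} together with the regularity bound \eqref{e:reg_u} to conclude that $\|\bu\|_{L_2(\gamma)} + \|\bU^{h^2}\|_{L_2(\gamma)} \lesssim \|{\bf f}\|_{L_2(\gamma)}$. Since orthogonal projection onto $\mathcal{K}_{J_h, h}$ is a contraction in $L_2(\gamma)$, this bounds $\|\bu - (\bU^{h^2} - P_{\mathcal{K}_{J_h,h}} \bU^{h^2})\|_{L_2(\gamma)}$ by a constant (depending on $h_0$) times $\|{\bf f}\|_{L_2(\gamma)}$. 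Since $h > h_0$ means $h^{2-\alpha} \ge h_0^{2-\alpha}$ is bounded below by a positive constant, this uniform bound can be absorbed into the implicit constant, yielding the claim.

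The main obstacle, which is really quite mild, is the careful verification that the threshold $h^\alpha - 2h^2$ correctly separates Killing-field indices from non-Killing indices in the asymptotic regime. This reduces to the interplay between $\alpha < 2$ (ensuring that Killing indices satisfy $\Lambda_i \lesssim h^2$, which is strictly below the threshold as $h \to 0$) and $\alpha \ge 1$ together with the $O(h^2)$ convergence of eigenvalues (ensuring that non-Killing indices with $\lambda_i > 0$ eventually exceed the threshold). The splitting between the asymptotic and preasymptotic regimes is the other slightly delicate point, but it is resolved cleanly by the uniform boundedness argument above, which exploits the trivial observation that the claimed rate degenerates to a constant on any fixed range $h > h_0$.
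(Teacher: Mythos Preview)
Your argument is correct and, like the paper, reduces the corollary to the preceding theorem. The paper regards this corollary as a direct restatement: since the argmin in Lemma~\ref{l:J*} is taken over $J_{\mathcal K}$ (subsets of $\{1,2,3\}$), the minimizer $J^*_h$ is by construction $\{i\in\{1,2,3\}:\Lambda_i\le h^\alpha-2h^2\}=J_h$ for \emph{all} $h$, so both estimates follow immediately from the theorem with $\mathcal K^*_h=\mathcal K_{J_h,h}$. Your literal reading of the display~\eqref{e:J*} as an unrestricted index set leads you to add the exclusion of indices $i\ge 4$ and a separate uniform-bound argument (via \eqref{stability} and the contractivity of $P_{\mathcal K_{J_h,h}}$) for the preasymptotic $h^{2-\alpha}$ estimate. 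These steps are valid but unnecessary once one notes that $J_h=J^*_h$ identically; the preceding theorem then already delivers the $h^{2-\alpha}$ bound for all $h$ via the minimizing property of $J^*_h$, without splitting into cases.
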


\ARD{\subsection{Filtering Killing fields from ${\bf f}$} \label{sub_kfieldsfromf}  Here we discuss options when in contrast to our previous assumption ${\bf f} \not\perp \mathcal{K}$.   In this case removal of Killing fields is essential.  The continuous problem is not well-posed, and in our $\varepsilon$-perturbed discrete formulation this illposedness is manifested by amplification of Killing modes by a factor of $\frac{1}{\Lambda_i+\varepsilon}$ as seen in \eqref{eigenexpansion}.  Since $\Lambda_i=O(h^2)$ these modes will dominate as $h, \varepsilon \rightarrow 0$ if not removed.}

\ARD{We first obtain the following lemma when ${\rm dim}(\mathcal{K})$ is known.
\begin{lemma}
\label{lem:fminuspkhf}
Let $\bU^\varepsilon$ solve the finite element system \eqref{FE_def} with $\varepsilon=h^2$.  Let also ${\mathcal{K}_h}$ be the span of the first ${\rm dim}(\mathcal{K})$ discrete eigenfunctions.  Then for $h$ small enough,
\begin{align}
\label{l2est_pkh}
\|\bu-(\bU^\varepsilon - P_{\mathcal{K}_h} \bU^\varepsilon)\|_{L_2(\gamma)} \lesssim h^2 \|{\bf f}\|_{L_2(\gamma)},
\end{align}
independent of whether ${\bf f}$ is free of of Killing fields or not.
\end{lemma}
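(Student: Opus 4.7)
The plan is to decompose ${\bf f} = {\bf f}_\perp + {\bf f}_{\mathcal{K}}$ with ${\bf f}_{\mathcal{K}} = P_{\mathcal{K}} {\bf f} \in \mathcal{K}$ and ${\bf f}_\perp \in \mathcal{K}^\perp$, and to exploit linearity of the perturbed finite element equation \eqref{FE_def} to write $\bU^\varepsilon = \bU^\varepsilon_\perp + \bU^\varepsilon_{\mathcal{K}}$, where $\bU^\varepsilon_\perp$ and $\bU^\varepsilon_{\mathcal{K}}$ are driven by ${\bf f}_\perp$ and ${\bf f}_{\mathcal{K}}$ respectively. Since the continuous $\bu$ is by convention the solution corresponding to the admissible data ${\bf f}_\perp$, the triangle inequality yields
\begin{equation*}
\|\bu-(\bU^\varepsilon - P_{\mathcal{K}_h} \bU^\varepsilon)\|_{L_2(\gamma)}
\le
\|\bu-(\bU^\varepsilon_\perp - P_{\mathcal{K}_h} \bU^\varepsilon_\perp)\|_{L_2(\gamma)}
+
\|\bU^\varepsilon_{\mathcal{K}} - P_{\mathcal{K}_h} \bU^\varepsilon_{\mathcal{K}}\|_{L_2(\gamma)},
\end{equation*}
so it is enough to bound each term separately by $h^2\|{\bf f}\|_{L_2(\gamma)}$.

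For the first term I would further split
$\bU^\varepsilon_\perp - P_{\mathcal{K}_h}\bU^\varepsilon_\perp
= (\bU^\varepsilon_\perp - P_{\mathcal{K}}\bU^\varepsilon_\perp)
+ (P_{\mathcal{K}} - P_{\mathcal{K}_h})\bU^\varepsilon_\perp$.
Corollary \ref{cor:L2_second} applied to ${\bf f}_\perp$ handles the first piece. For the second piece I would use that the eigenfunction error estimate \eqref{eigen:errors} guarantees $\|\bu_i-\bU_i\|_{L_2(\gamma)}\lesssim h^2$ for $i\le \dim(\mathcal{K})$, which, since $\dim(\mathcal{K})\le 3$ and $\mathcal{K}_h$ has the same finite dimension as $\mathcal{K}$, implies the gap bound $\|(P_{\mathcal{K}} - P_{\mathcal{K}_h})\bv\|_{L_2(\gamma)} \lesssim h^2 \|\bv\|_{L_2(\gamma)}$ for every $\bv \in L_2(\gamma)^3$. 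Applied with $\bv = \bU^\varepsilon_\perp$ and combined with the a priori bound $\|\bU^\varepsilon_\perp\|_{L_2(\gamma)} \lesssim \|{\bf f}_\perp\|_{L_2(\gamma)}$ (obtained by writing $\bU^\varepsilon_\perp = \bu^\varepsilon - (\bu^\varepsilon-\bU^\varepsilon_\perp)$ and using \eqref{e:reg_u_eps} together with \eqref{L2eps} at $\varepsilon = h^2$), this closes the first piece.

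The substantial and most delicate part is the second term, and the key tool is the eigenfunction expansion \eqref{eigenexpansion}. Since ${\bf f}_{\mathcal{K}}\in \mathcal{K}$ and ${\rm span}\{\bU_i\}_{i\le \dim(\mathcal{K})} = \mathcal{K}_h$, we have
\begin{equation*}
\bU^\varepsilon_{\mathcal{K}} - P_{\mathcal{K}_h}\bU^\varepsilon_{\mathcal{K}}
= \sum_{i>\dim(\mathcal{K})} \frac{({\bf f}_{\mathcal{K}},\bU_i)}{\Lambda_i+\varepsilon}\,\bU_i.
\end{equation*}
For $i>\dim(\mathcal{K})$ we have $\bu_i \in \mathcal{K}^\perp$, so $({\bf f}_{\mathcal{K}},\bu_i)=0$ and hence $({\bf f}_{\mathcal{K}},\bU_i)=({\bf f}_{\mathcal{K}},\bU_i-\bu_i)$. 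By \eqref{eigen:errors} this is $O(h^2\|{\bf f}_{\mathcal{K}}\|_{L_2(\gamma)})$, uniformly in $i$. Meanwhile an eigenvalue gap argument based on \eqref{eigen:errors} gives $\Lambda_i+\varepsilon \ge \Lambda_i \gtrsim 1$ for $h$ small enough and all $i>\dim(\mathcal{K})$. Combining these two facts with Parseval and the trivial $\ell^2$-bound $\sum_{i>\dim(\mathcal{K})}({\bf f}_{\mathcal{K}},\bU_i)^2 \le \|{\bf f}_{\mathcal{K}}\|_{L_2(\gamma)}^2$ gives the bound $\|\bU^\varepsilon_{\mathcal{K}} - P_{\mathcal{K}_h}\bU^\varepsilon_{\mathcal{K}}\|_{L_2(\gamma)} \lesssim h^2\|{\bf f}\|_{L_2(\gamma)}$.

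The main obstacle I expect is justifying the uniform positive lower bound on $\Lambda_i$ for $i>\dim(\mathcal{K})$: this relies on a spectral gap of the continuous Stokes operator between the zero cluster and the rest of the spectrum, together with a careful use of the Babu\v{s}ka--Osborn theory already invoked in the proof of \eqref{eigen:errors} applied to a fixed finite number of clusters. Once this gap is in hand the rest of the argument is a straightforward bookkeeping exercise in the discrete eigenbasis.
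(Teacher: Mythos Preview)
Your overall strategy---splitting $\bU^\varepsilon=\bU^\varepsilon_\perp+\bU^\varepsilon_{\mathcal K}$ by linearity and handling the two pieces separately---is sound and is essentially a reorganization of the paper's argument, which instead introduces two auxiliary discrete problems with data ${\bf f}-P_{\mathcal K}{\bf f}$ and ${\bf f}-P_{\mathcal K_h}{\bf f}$.  The treatment of the first term is fine.

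The gap is in your handling of the second term.  You write $({\bf f}_{\mathcal K},\bU_i)=({\bf f}_{\mathcal K},\bU_i-\bu_i)$ for $i>\dim(\mathcal K)$ and then invoke \eqref{eigen:errors} to conclude this is $O(h^2\|{\bf f}_{\mathcal K}\|)$ \emph{uniformly in $i$}.  That uniformity is not provided by \eqref{eigen:errors}: the Babu\v{s}ka--Osborn bound applies to a fixed eigenvalue cluster, with constants depending on that cluster, and the sum runs over $N\sim h^{-2}$ indices.  Without the uniform pointwise bound, neither your ``trivial $\ell^2$-bound'' $\sum_i({\bf f}_{\mathcal K},\bU_i)^2\le\|{\bf f}_{\mathcal K}\|^2$ alone (which gives only $O(1)$) nor any obvious interpolation between the two closes the estimate.

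The fix is short and uses a tool you already invoked for the first term.  For $i>\dim(\mathcal K)$ one has $\bU_i\perp\mathcal K_h$, so $({\bf f}_{\mathcal K},\bU_i)=({\bf f}_{\mathcal K}-P_{\mathcal K_h}{\bf f}_{\mathcal K},\bU_i)$.  Then Bessel together with the gap bound $\|(P_{\mathcal K}-P_{\mathcal K_h}){\bf f}_{\mathcal K}\|\lesssim h^2\|{\bf f}_{\mathcal K}\|$ gives
\[
\sum_{i>\dim(\mathcal K)}({\bf f}_{\mathcal K},\bU_i)^2
\le\|{\bf f}_{\mathcal K}-P_{\mathcal K_h}{\bf f}_{\mathcal K}\|_{L_2(\gamma)}^2
\lesssim h^4\|{\bf f}_{\mathcal K}\|_{L_2(\gamma)}^2,
\]
and now the single lower bound $\Lambda_{\dim(\mathcal K)+1}\gtrsim\lambda_{\dim(\mathcal K)+1}>0$ (which does follow from \eqref{eigen:errors} applied to one cluster, plus monotonicity of the $\Lambda_i$) finishes the job.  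This is exactly the mechanism the paper uses: it bounds the analogous tail by $\frac{1}{\Lambda_{\dim(\mathcal K)+1}+\varepsilon}\|P_{\mathcal K_h}{\bf f}-P_{\mathcal K}{\bf f}\|$, avoiding any appeal to high-index eigenfunction convergence.
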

\begin{proof}
Let $\bW  \in {\bf VT}(\T)$ solve 
$$a_\varepsilon(\bW, \bV)+j(\bW, \bV)=\int_\gamma ({\bf f}-P_{\mathcal{K}} {\bf f}) \cdot \bV, ~~\bV \in {\bf VT}(\T),$$
and $\bY \in {\bf VT}(\T)$ solve
$$a_\varepsilon(\bY, \bV)+j(\bY, \bV)=\int_\gamma ({\bf f}-P_{\mathcal{K}_h} {\bf f}) \cdot \bV, ~~\bV \in {\bf VT}(\T).$$
From \eqref{eigenexpansion} we have $\bY=\bU^\varepsilon-P_{\mathcal{K}_h} \bU^\varepsilon$.  In addition, \eqref{opt_L2} and \eqref{eigen:errors} yield $\|\bu-(\bW-P_{\mathcal{K}_h} \bW)\|_{L_2(\gamma)} \lesssim h^2 \|{\bf f}\|_{L_2(\gamma)}$.  Thus by the triangle inequality the lemma will be proved once we show that $\|\bY-(\bW-P_{\mathcal{K}_h} \bW)\|_{L_2(\gamma)} \lesssim h^2 \|{\bf f}\|_{L_2(\gamma)}$.  
Using the discrete eigenexpansion \eqref{eigenexpansion} and $P_{\mathcal{K}_h} \bY=0$, we compute that
$$\bY-(\bW-P_{\mathcal{K}_h} \bW) = \bY-\bW-P_{\mathcal{K}_h} (\bY-\bW)= \sum_{i={\rm dim}(\mathcal{K})+1}^N \frac{(P_{\mathcal{K}_h}{\bf f}-P_{\mathcal{K}} {\bf f}, \bU^i)}{\Lambda_i+ \varepsilon} \bU^i.$$
$\Lambda_i \le \Lambda_{i+1}$ and $\lambda_{{\rm dim}(\mathcal{K})+1} >0$, so \eqref{eigen:errors} yields for $h$ small enough that
$$\|\bY-(\bW-P_{\mathcal{K}_h} \bW)\|_{L_2(\gamma)}  \le \frac{1}{\Lambda_{{\rm dim}(\mathcal{K})+1}+ \varepsilon} \|P_{\mathcal{K}_h}{\bf f}-P_{\mathcal{K}}{\bf f}\|_{L_2(\gamma)} \lesssim \frac{h^2}{\lambda_{{\rm dim}(\mathcal{K})+1}}.$$
This completes the proof.  
\end{proof}
}

\ARD{We next give a procedure for filtering Killing fields from $f$ when ${\rm dim}(\mathcal{K})$ is unknown.  If ${\bf f} \not\perp \mathcal{K}$, then solving $a_\varepsilon(\bu_{\bf f}, \bv)=({\bf f}, \bv)$ for ${\bu}_{\bf f} \in {\bf VT}(\gamma)$ yields $\varepsilon P_{\mathcal{K}} \bu_{\bf}=P_{\mathcal{K}} {\bf f}$ and $\|\varepsilon \bu_{\bf f}-P_{\mathcal{K}} {\bf f}\|_{L_2(\gamma)} \lesssim \varepsilon$.  The discrete counterpart is to find $\bU_{\bf f} \in {\bf VT}(\T)$ such that 
\begin{align}
\label{eq:pkfapprox}
a_\varepsilon(\bU_{\bf f}, \bV) + j(\bU_{\bf f}, \bV)=({\bf f}, \bV), ~~\bV \in {\bf VT}(\T).
\end{align}
It can then be shown that
\begin{align}
\|P_{\mathcal{K}}{\bf f}-\varepsilon{\bU_{\bf f}}\|_{L_2(\gamma)} \lesssim h^2 + \varepsilon^{-1} h^2 + \varepsilon.
\end{align}
The function $\bU_{{\bf f}}$ computed with $\varepsilon=h^\alpha$, $0<\alpha<2$, is thus a convergent approximation to $P_{\mathcal{K}} {\bf f}$ which may be obtained without any knowledge of ${\rm dim}(\mathcal{K})$.
 }

\ARD{Let now $\bW \in {\bf VT}(\T)$ solve
\begin{align}
\label{eq100}
a_\varepsilon(\bW, \bV)+j(\bW, \bV)=({\bf f}-\varepsilon \bU_{\bf f}, \bV), ~~\bV \in {\bf VT}(\T).
\end{align} 
Then
\begin{align}
\label{eq101}
 \bW=\sum_{i=1}^N \frac{1}{\Lambda_i+\varepsilon}\left (1-\frac{\varepsilon}{\Lambda_i+\varepsilon} \right) ({\bf f}, \bU^i) \bU^i= \sum_{i=1}^N \frac{\Lambda_i}{(\Lambda_i+\varepsilon)^2} ({\bf f}, \bU^i) \bU^i .
\end{align}
Choosing $\epsilon = h^{2/3}$ yields
\begin{align}
\|\bu - \bW\|_{L_2(\gamma)} \lesssim h^{2/3}.  
\end{align}
Here we have chosen $\varepsilon$ in \eqref{eq:pkfapprox} and \eqref{eq100} to be the same.  }

\ARD{We next leverage the robustly convergent but suboptimal approximation $\bW$ to achieve a preasymptotically convergent and asymptotically optimal approximation.  Solving
\begin{align}
a_\varepsilon (\bU^{h^2}, \bV) + j(\bU^{h^2}, \bV)=({\bf f}, \bV), ~~\bV \in {\bf VT}(\gamma)
\end{align}
yields
\begin{align}
\label{eq110}
\bU^{h^2} = \sum_{i=1}^N \frac{1}{\Lambda_i+h^2} ({\bf f}, \bU^i) \bU^i.
\end{align}
Recall from \eqref{l2est_pkh} that we will obtain an $O(h^2)$ approximation to $\bu$ in $L_2$ if we correctly identify and remove the discrete modes corresponding to Killing fields.  Using the same idea as in Lemma \ref{l:J*}, we consider $\bU^i$ to be a discrete Killing mode and remove the corresponding term from the solution \eqref{eq110} if doing so leads to better fidelity to the robustly convergent solution \eqref{eq101}.   Applying this idea term by term to the potential Killing modes in \eqref{eq101} and \eqref{eq110} ($1 \le i \le 3$), we consider $\bU^i$ to be a discrete Killing mode if 
\begin{align}
\label{eq111}
\left |\frac{\Lambda_i}{(\Lambda_i+h^{2/3})^2}-0 \right | \le \left |\frac{\Lambda_i}{(\Lambda_i+h^{2/3})^2}-\frac{1}{\Lambda_i+h^2} \right | = \frac{1}{\Lambda_i+h^2}
-\frac{\Lambda_i}{(\Lambda_i+h^{2/3})^2};
\end{align} 
compare with \eqref{e:threshold}.
Rearranging this inequality, we define a putative discrete Killing space as
\begin{align}
\label{eq112}
\mathcal{K}_h= {\rm span}\{\bU^i: \Lambda_i+2 \Lambda^i(h^2-h^{2/3}) \le h^{4/3}\}.
\end{align}
With this definition, we have that $\|\bu-(\bU^{h^2} -P_{\mathcal{K}_h} \bU^{h^2})\|_{L_2(\gamma)} \lesssim h^{2/3}$ for all $h$, and for $h$ sufficiently small $\|\bu-(\bU^{h^2} -P_{\mathcal{K}_h} \bU^{h^2})\|_{L_2(\gamma)} \lesssim h^{2}$.  
}

\section{Numerical experiments}
\label{sec:numerics}

We illustrate our findings by numerical experiments. 
We shall take $\gamma$ to be either a sphere or a proper ellipsoid
that is defined by the equation $x^2+y^2+\frac{z^2}{c^2} = 1$ with some parameter $c$.
The case $c=1$ yields the unit sphere,
where we have ${\rm dim}(\mathcal{K})=3$ and an $L_2$-orthogonal (but not normalized) basis for $\mathcal{K}$ 
is given by $\bk_1=(y,-x,0)^T$, $\bk_2=(z, 0, -x)^T$, and $\bk_3=(0, z, -y)^T$.  
The case $c \neq 1$ yields an ellipsoid whose $z$-axis is the only axis of rotational symmetry, 
and so ${\rm dim}(\mathcal{K}) = 1$ and $\mathcal{K} = \{a(y, -x, 0)^T: a \in \mathbb{R} \}$.

Following \cite{OQRY18}, we let $\bu(x,y,z)=\Pi(-z^2, x, y)^T$ for all choices of $c$ and set $p(x,y,z)=xy^3+z$.  
Here ${\rm div}_\gamma \bu \neq 0$, 
so our test solution does not directly fit into the framework described above.  
We still expect all error estimates to hold as stated, and our numerical experiments indicate that they do.  
The algorithm and analysis of Section~\ref{subsec:Kfilt} also remain valid in this setting, 
with appropriate modifications to the analysis since $\bU^\varepsilon$ is no longer divergence-free. 
For instance, the eigenfunction expansion \eqref{eigenexpansion} may be applied to the $L_2$ projection of $\bU^\varepsilon$ onto the divergence-free discrete subspace $\VT[\T]$ and the remaining analysis carried out much the same as before.

\subsection{Implementation}

\ARD{Implementation was carried out within the iFEM MATLAB library \cite{Ch09PP}.}  All computations below were carried out with the finite element method 
defined on a discrete polyhedral approximation $\Gamma$ to $\gamma$
(as described in Subsection~\ref{subsec:discrete_surf}), 
not on the exact surface $\gamma$ as we have assumed above.  \ARD{
The piecewise affine velocity space ${\bf XT}(\bMT)$ and piecewise constant pressure space $\mathbb{V}(\bMT)$ were employed and all derivatives and integrals were computed over the discrete surface $\Gamma$.  A standard BDM basis was defined on a Euclidean reference triangle and mapped via elementwise Piola transform to the discrete surface $\Gamma$.    Further transforming these discrete spaces to the continuous surface $\gamma$ as described above could be easily accomplished so long as the user has access to the closest point map $\bP_d$ and related information used in the definition of the Piola transform; cf. \cite{DD07} for discussion of practical calculation of these quantities.  Implementation for higher-order BDM spaces would be conceptually very similar.  The main difference is that typically computations involving higher order elements are carried out on a polynomial surface approximation of higher degree rather than on the affine surface $\Gamma$, which modestly complicates implementation by for example requiring computation of non-piecewise constant Jacobians.   }

\ARD{Implementation of the algorithms for filtering Killing fields presented in Section \ref{sec:killing} is straightforward.  The discrete eigenproblem \eqref{disc_eigen_stable} is solved for the first three eigenpairs $(\Lambda_i, \bU^i)$  using a standard algorithm.  $\mathcal{K}_h$ is taken as the span of the first ${\rm dim}(\mathcal{K})$ discrete eigenfunctions $\bU^i$ if ${\rm dim}(\mathcal{K})$ is given.  Otherwise the condition \eqref{eig_condition} or \eqref{eq111} (depending on whether ${\bf f} \perp \mathcal{K}$ or ${\bf f} \not\perp \mathcal{K}$) is checked directly for each $1 \le i \le 3$, and $\mathcal{K}_h$ is defined accordingly.  In all cases it is easy to compute $\bU^\varepsilon - P_{\mathcal{K}_h} \bU^\varepsilon$ once $\mathcal{K}_h$ has been defined.  
}

\subsection{Geometric errors}
Approximating $\gamma$ by $\Gamma$ as described in the preceding subsection induces an additional consistency error in the finite element method, 
often called a \emph{geometric error}.  
For the Laplace-Beltrami (surface scalar Laplace) problem, the geometric error has been shown to be $O(h^2)$ for a wide range of finite element methods (including discontinuous Galerkin methods) and error notions (including energy and $L_2$ norms and eigenvalues) \cite{DMS13, ADMSSV15, CD16, BDO18, bonito2019finite}.  
The situation is very different for surface vector Laplace-type problems such as the Stokes equation.   \ARD{The papers \cite{JR20, JORZPP} contain error analysis for trace surface finite element method for surface Stokes and vector Laplace problems.  The lowest-order algorithms in these papers} employ a linear surface approximation $\Gamma$ as we do, but a higher-order ($O(h^2)$) convergent approximation to the normal $\bnu$ is \ARD{required} in order to maintain optimal-order $O(h^2)$ geometric errors.  
In contrast, in our experiments below we consistently observed an $O(h^2)$ geometric error while employing a piecewise linear surface approximation and corresponding natural $O(h)$ piecewise constant normal approximation. 
A theoretical explanation for these observed rates is not clear and bears further investigation.

\subsection{Eigenvalue computation and Killing fields}

Before discussing the finite element method for the Stokes equations,
we illustrate how the geometry of $\gamma$ 
affects the algorithm of Section~\ref{subsec:Kfilt} for filtering out Killing fields. 
We approximated the first three eigenvalues of the Stokes problem \eqref{cont_eigen} by solving \eqref{disc_eigen}
for different values of $c$; see Table~\ref{table_eigs}.  In all cases we have $\lambda_1 = 0$ and $\lambda_2=\lambda_3$ due to symmetries of the ellipsoids under consideration. 
Approximations to $\lambda_2=\lambda_3$ were computed by solving the discrete eigenvalue problem on multiple mesh levels and extrapolating assuming that $|\lambda_i-\Lambda_i| \lesssim Ch^2$, cf. \eqref{eigen:errors}.   
Note for example that when $c=1.1$, the second eigenvalue is nonzero but small.
Absent a priori information, significant resolution of $\gamma$ is necessary
in order to confidently determine ${\rm dim}(\mathcal{K})$.  
\begin{table}[h!]
\begin{center}
\begin{tabular}{|c|c|c|c|c|}
\hline $c$ & $1$ & $1.1$ & $1.25$ & $2$
\\ \hline $\lambda_2=\lambda_3$ & $0$ & $0.0096$ & $0.051$ & 0.40
\\ \hline  
\end{tabular}
\caption{Approximations to the second and third eigenvalues of the Stokes problem on the ellipsoids $x^2+y^2+c^{-2} z^2 = 1$.} 
\label{table_eigs}
\end{center}
\end{table}

\subsection{Basic error behavior} 
In this section we illustrate the basic energy and $L_2$ error estimates of Sections~\ref{subsec:basic_estimates} and Section~\ref{sec:L2}. 
Here we take $\gamma$ to be the sphere ($c=1$) and the ellipsoid given by various values of $c$.  \ARD{Since analytical expressions for the Killing fields are known, we filter them out exactly as needed by computing $\bU^\varepsilon-P_{\mathcal{K}} \bU^\varepsilon$.}  
In Figure~\ref{fig1} we display the convergence of the $L_2$ norm of the error in the deformation gradient 
(which is the practically important part of the energy norm bounded in \eqref{eq:final_approx}).  

The expected $O(h)$ convergence rate is immediately apparent when $\varepsilon=h^2$.  
The behavior is more subtle when $\varepsilon=h$,
where we see consistent $O(h)$ convergence when $c=1$ (for the sphere), 
an initial period of suboptimal convergence and then clear $O(h)$ convergence asymptotically when $c=1.25$, 
and a convergence history that has not yet reached the asymptotic $O(h)$ range when $c=1.1$.  
This behavior may be understood by combining the data in Table~\ref{table_eigs} with the eigenfunction expansion \eqref{eigenexpansion}. 
The coefficients in the eigenexpansion would be $\frac{({\bf f}, \bU_i)}{\Lambda_i}$
if no perturbation were used ($\varepsilon = 0$), 
and these coefficients would yield the best approximation to $\bu$ assuming $\lambda_i \neq 0$. 
The difference in the perturbed versus unperturbed coefficients is 
$\frac{({\bf f}, \bU_i)}{\Lambda_i}-\frac{({\bf f}, \bU_i)}{\Lambda_i+\varepsilon}=\frac{\varepsilon ({\bf f}, \bU_i)}{\Lambda_i(\Lambda_i+\varepsilon)}$, 
which is clearly of order $\varepsilon$ when $\varepsilon \ll \Lambda_i$.  
However, in our experiments this assumption does not always hold for $i=2,3$ and $\varepsilon=h$. 
Referring again to Table~\ref{table_eigs}, we see that when $c=1.25$ we have $\lambda_2 \approx0.051$.  
The convergence history for $\varepsilon=h$ and $c=1.25$ in Figure~\ref{fig1} reaches the expected asymptotic $O(h)$ when $h\ll 0.051$.
On the other hand, when $c=1.1$ we have $\lambda_2 \approx 0.0096$, 
and the convergence history in Figure~\ref{fig1} does not reach the asymptotic range because $h$ does not sufficiently resolve this small but positive eigenvalue. 
Taking $\varepsilon=h^2$ yields an algorithm that is less sensitive to small eigenvalues and thus more stable with respect to small geometric perturbations. 

\setlength{\unitlength}{.75cm}
\begin{figure}[h]
\centering
\includegraphics[scale=.4]{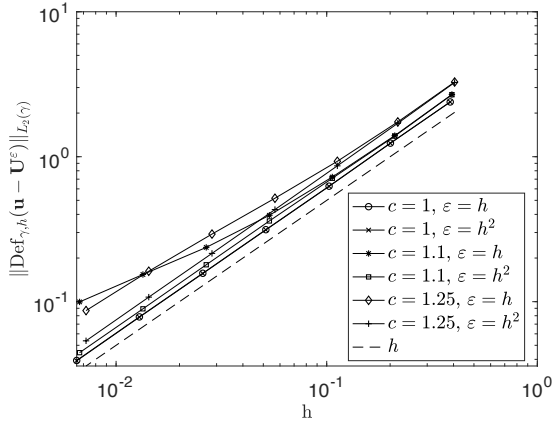}
\caption{Energy norm convergence for varying values of $c$ and $\varepsilon$.}
\label{fig1}
\end{figure}

Next we illustrate Lemma~\ref{lem:kfields} (Killing Field Estimates).  
Here we take $c=1$ so that  $\gamma$ is the unit sphere.  
In Figure~\ref{fig2}, we see that $\|P_{\mathcal{K}} \bU^\varepsilon\|_{L_2(\gamma)}=O(h)$ when $\varepsilon=h$,
and that $\|P_{\mathcal{K}} \bU^\varepsilon\|_{L_2(\gamma)}=O(1)$ when $\varepsilon =h^2$,
as predicted by Lemma~\ref{lem:kfields}.  
 
\setlength{\unitlength}{.75cm}
\begin{figure}[h]
\centering
\includegraphics[scale=.4]{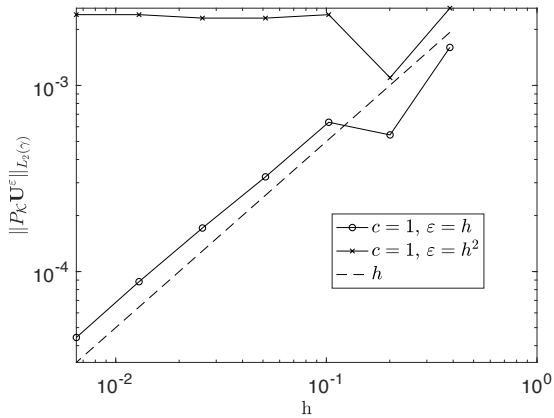}
\caption{Convergence of $\|P_{\mathcal{K}} \bU^\varepsilon\|_{L_2(\gamma)}$ for the sphere ($c=1$).}
\label{fig2}
\end{figure}

Our final experiments in this subsection illustrate the $L_2$ convergence results of Corollary~\ref{cor:L2} and Corollary~\ref{cor:L2_second}.
In the left plot of Figure~\ref{fig3} the data is given for the unit sphere ($c=1$).  
Here we clearly see that for $\varepsilon=h$, convergence in $L_2$ is of order $h$ whether or not Killing fields are filtered out.
On the other hand, when $\varepsilon=h^2$ convergence eventually stagnates if Killing fields are not removed,
but is of order $h^2$ if they are.  These results confirm the sharpness of Corollary~\ref{cor:L2} and Corollary~\ref{cor:L2_second}.
In the right plot of Figure~\ref{fig3} we consider the case $c=1.1$, where $\gamma$ is an ellipsoid which according to Table~\ref{table_eigs} has small positive Stokes eigenvalues.  
As in the case of energy norm convergence, these small eigenvalues have a substantial negative effect on the convergence behavior
when $\varepsilon=h$.  When $\varepsilon=h^2$ these small eigenvalues are seen to have some negative effect on convergence rates,
but far less than when $\varepsilon=h$.
Thus we again see that taking $\varepsilon=h^2$ yields a method that is much more stable with respect to perturbations in geometry
so long as we are able to filter out Killing fields.

\setlength{\unitlength}{.75cm}
\begin{figure}[h]
\centering
\includegraphics[scale=.34]{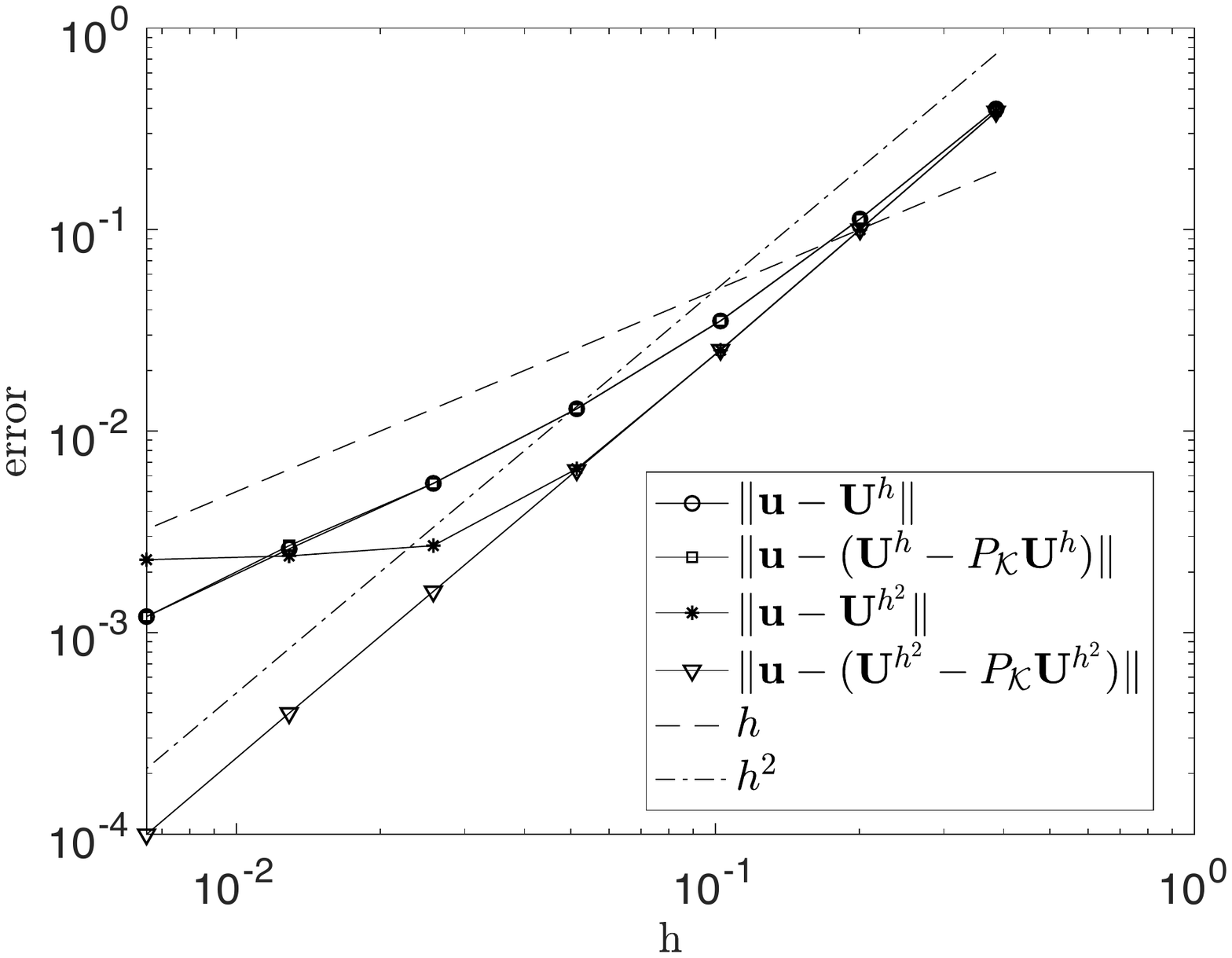}
\includegraphics[scale=.34]{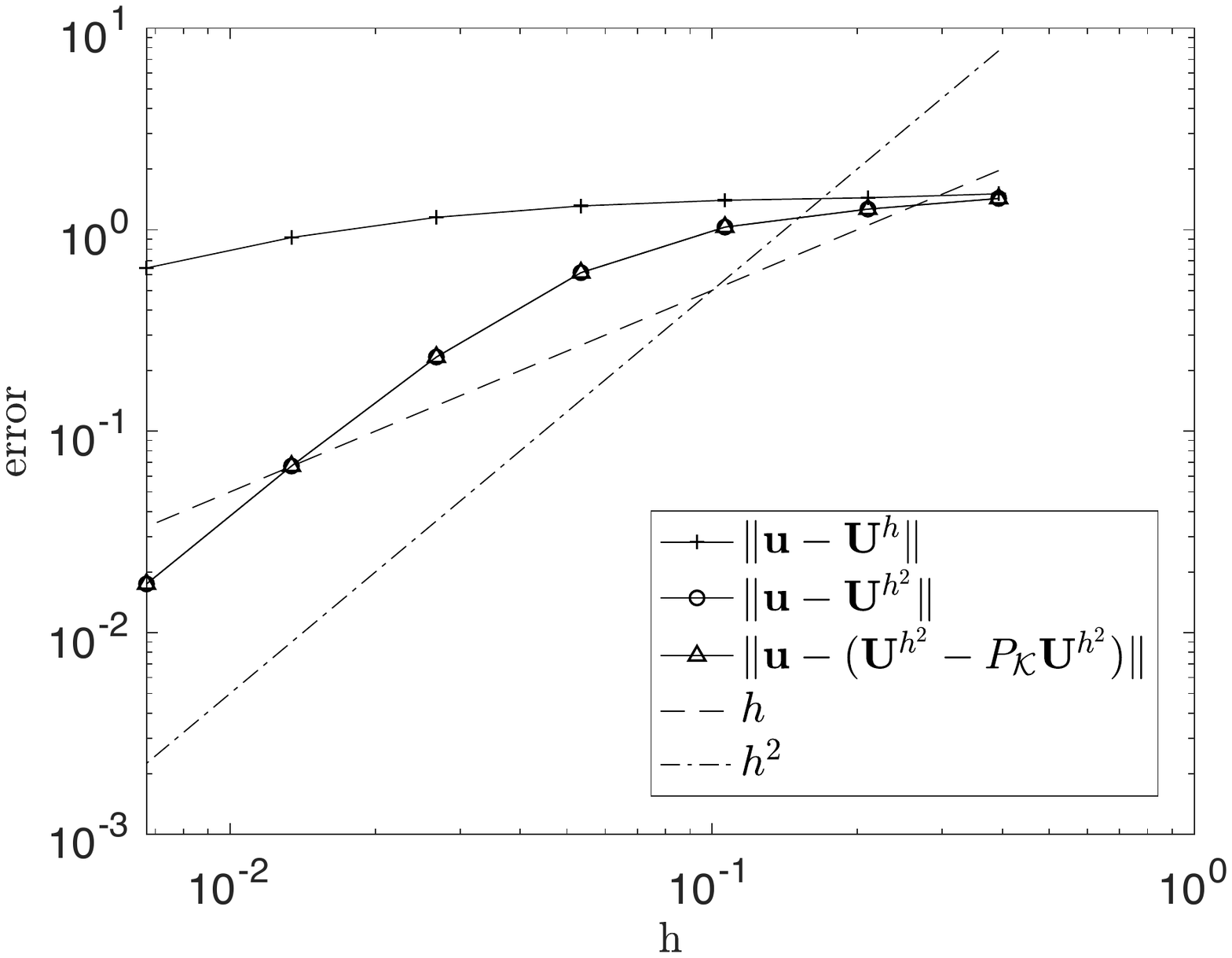}
\caption{$L_2$ convergence with and without manual filtering of Killing fields on the sphere ($c=1$, left) and on an ellipsoid with small eigenvalues ($c=1.1$, right).}
\label{fig3}
\end{figure}

\subsection{Automatic filtering of Killing fields via discrete eigenfunction computation} 
In this subsection we illustrate properties of the algorithm given in Section~\ref{subsec:Kfilt} for filtering out Killing fields. 
We focus on the ellipsoid with $c=1.1$ as it is difficult to automatically distinguish between Killing fields and modes corresponding to small positive eigenvalues in this case.  

In the left plot of Figure~\ref{fig4} we see the results obtained by comparing with $\bU^h$ ($\alpha=1$) in Corollary~\ref{cor:Kfields}.  
Here the $L_2$ error obtained from our algorithm remains nearly flat until the final data point, 
at which point it decreases sharply to mirror the error obtained by manual filtering of Killing fields.  
The reason for this relatively poor performance can be understood by referring again to Table~\ref{table_eigs}, 
where we see that the modes corresponding to $\lambda_2$ and $\lambda_3$ are viewed as Killing fields until roughly speaking $h-2h^2 \le0.0096$.  
This does not occur until the last data point in the simulation.  
These modes are thus incorrectly {\it excluded} from the computed solution in the preasymptotic range,
leading to an $O(1)$ error until the positive eigenvalues are resolved by $h-2h^2$.  
Note also that the $L_2$ error $\|\bu-\bU^h\|_{L_2(\gamma)}$ has not reached its asymptotic $O(h)$ rate here, so comparing with $\bU^h$ in the preasymptotic range is not necessarily a good strategy.  

In the right plot of Figure~\ref{fig4} we instead compare with $\bU^{h^{3/2}}$.  
Here modes are excluded from the solution only under the more stringent condition $\Lambda_i \le h^{3/2}-2h^2$.
This algorithm is thus more likely to improperly include Killing fields in the solution, 
but less likely to improperly exclude non-Killing modes.  
The results are dramatically better in the preasymptotic regime.  
Philosophically it seems preferable to include Killing fields in the solution than to improperly exclude other modes, 
so using a more stringent selection criterion is generally preferable. 
The superior performance of the choice $\alpha=3/2$ versus $\alpha=1$ in the preasymptotic range is not intuitive 
when viewed in light of Corollary~\ref{cor:Kfields}, which predicts a preasymptotic convergence rate of order $h^{2-\alpha}$.  
The discrepancy results from the previously observed phenomenon that 
small positive eigenvalues of the Stokes operator render the predicted convergence rate relatively meaningless
until the eigenvalues are sufficiently resolved by $\varepsilon$.  

\setlength{\unitlength}{.75cm}
\begin{figure}[h]
\centering
\includegraphics[scale=.34]{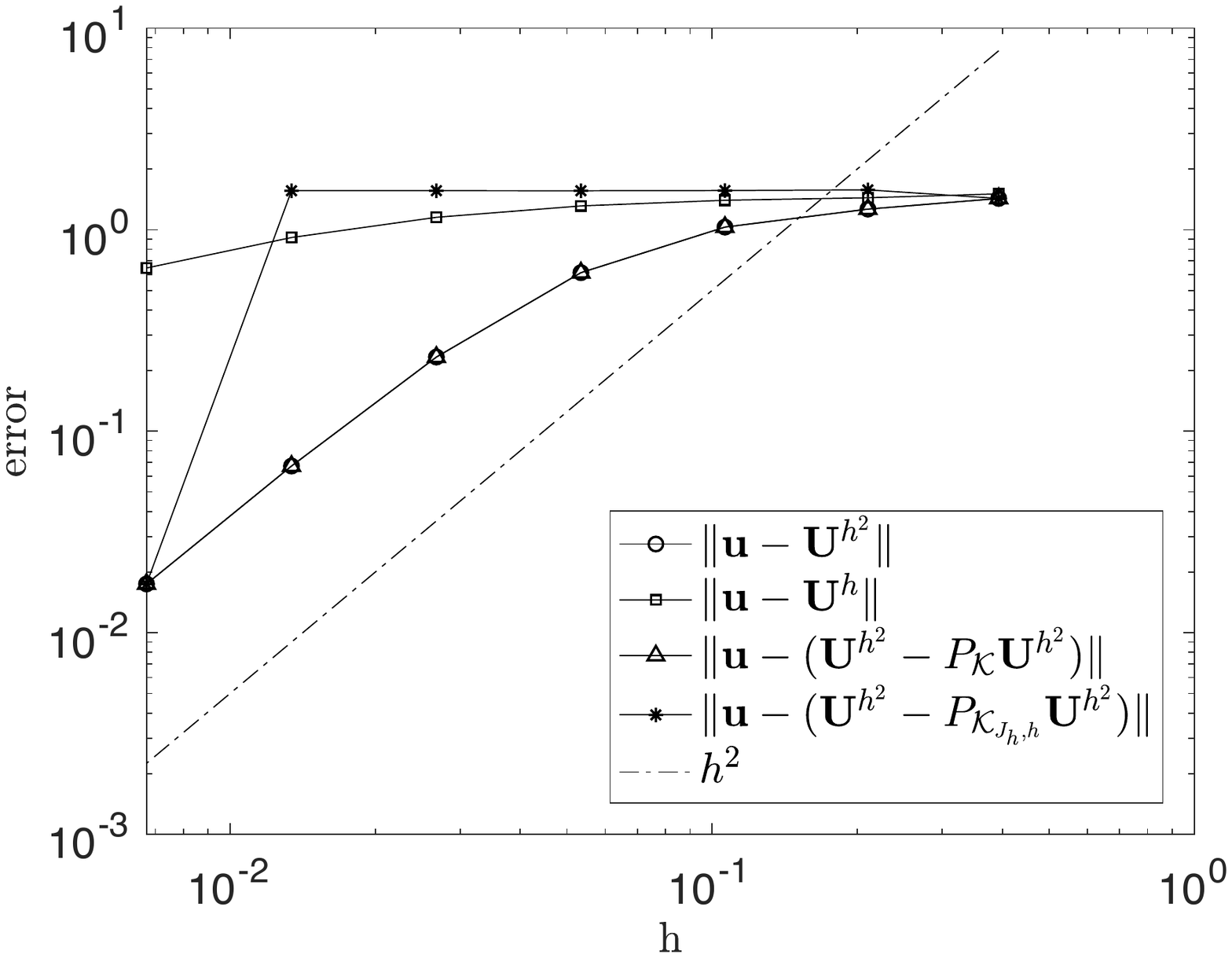}
\includegraphics[scale=.34]{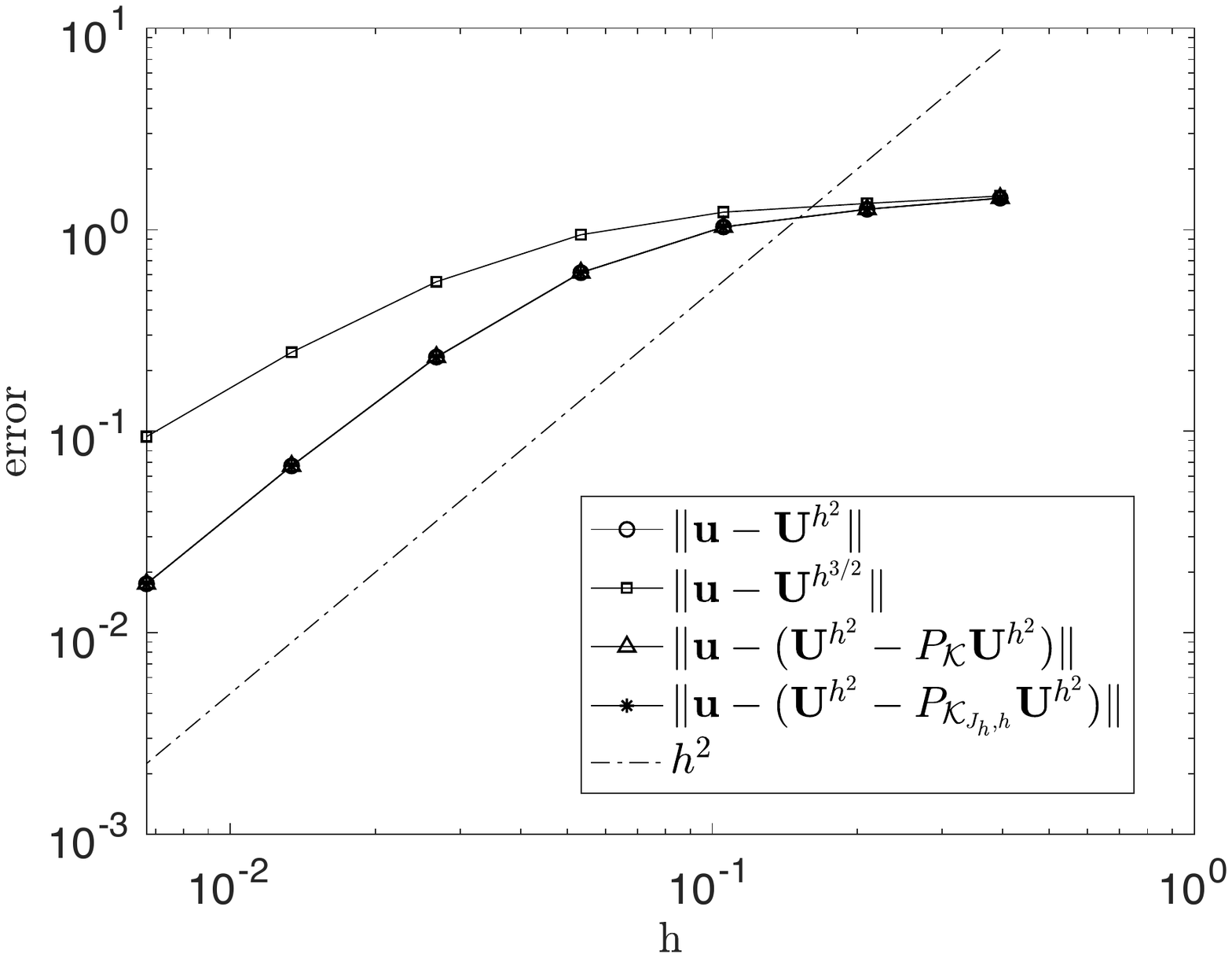}
\caption{$L_2$ convergence with automatic filtering of Killing fields by comparing with $\bU^h$ (left) and $\bU^{h^{3/2}}$ (right) on the ellipsoid with $c=1.1$.}
\label{fig4}
\end{figure}

\ARD{
\subsection{Computations when ${\bf f} \not\perp \mathcal{K}$}  In this subsection we illustrate the results of Subsection \ref{sub_kfieldsfromf}, in which options for obtaining an optimally convergent algorithm when ${\bf f} \not\perp \mathcal{K}$ are presented.   In Figure \ref{fig5} we present computational results obtained from the cases $c=2$ (with strong separation of eigenvalues corresponding to Killing fields and nondegenerate modes) and $c=1.1$ (where the smallest positive eigenvalues are relatively small).  For each case we illustrate the error estimate \eqref{l2est_pkh}, which applies when ${\rm dim}(\mathcal{K})$ is known and the discrete modes corresponding to $\mathcal{K}$ are removed from ${\bf f}$ or equivalently from ${\bf U}^\varepsilon$.  For both choices $c=1.1, 2$ we see $O(h^2)$ convergence as predicted, although in the relatively degenerate case $c=1.1$ the asymptotic rate is only seen for $h$ sufficiently small.  We also illustrate the algorithm obtained by using the criterion \eqref{eq112} in order to identify a putative discrete Killing field $\mathcal{K}_h$ when ${\rm dim}(\mathcal{K})$ is not known a priori.  For $c=2$ that the error remains relatively constant until the algorithm correctly identifies ${\rm dim}(\mathcal{K})$ and then decreases with order $h^2$.  When $c=1.1$ the condition \eqref{eq112} does not correctly identify ${\rm dim}(\mathcal{K})$ for the range of $h$ values tested, and the error remains essentially constant over the course of the calculation.  Recall that a preasymptotic convergence rate of $O(h^{2/3})$ and asymptotic rate of $O(h^2)$ is predicted for this algorithm.  As above, the theoretical preasymptotic convergence rate manifests itself in practice more as a stability guarantee rather than as a convergence rate.  However, we emphasize that without any filtering strategy the error would be expected to {\it increase} with order $h^{-2}$ when ${\bf f} \not\perp \mathcal{K}$, since the denominator $\Lambda_i+h^2$ in the eigenexpansion \eqref{eq110} is $O(h^2)$ for modes corresponding to Killing fields.    }

\setlength{\unitlength}{.75cm}
\begin{figure}[h]
\centering
\includegraphics[scale=.34]{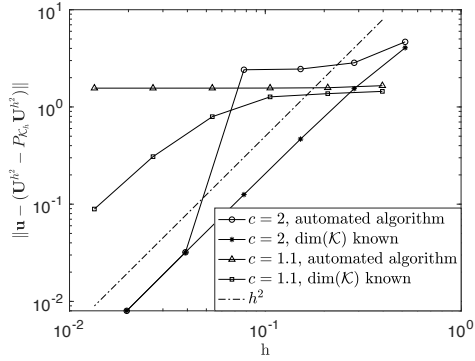}
\caption{$L_2$ convergence with automatic and manual filtering of Killing fields from ${\bf f}$.}
\label{fig5}
\end{figure}

\section{Appendix:  Discrete Korn-type inequality} \label{sec:Korn_app}
In this section we prove the discrete Korn-type Lemma~\ref{lem:disc_Korn} for the space ${\bf XT}(\T)$.  Our techniques are motivated by \cite{Bre04} with additional difficulties arising from the fact that we are working on surfaces.  

\begin{proof}
    We proceed in several steps.\\
    \noindent\boxed{1}
    We begin by defining an auxiliary discrete subspace of $H^1(\gamma)$.  Let $\Y(\bMT):=Y(\bMT)^3$, where $Y(\bMT)$ is the set of continuous piecewise linear functions on $\Gamma$, and let ${\bf YT}(\MT)=\{\Pi \overline{\bU}\circ \bP^{-1}, \overline{\bU} \in \Y(\bMT)\}$.  We then have that ${\bf YT}(\T) \subset \HT$, and we may easily derive from the continuous Korn inequality \eqref{e:Korn} that
    \begin{gather*}
    \|\bU\|_{H^1(\gamma)} \lesssim \|\Deform_\gamma(\bU)\|_{L_2(\gamma)} + \|\bU\|_{L_2(\gamma)}, ~~\bU \in {\bf YT}(\T).
    \end{gather*}
    Thus by the triangle inequality, \eqref{eq:disc_Korn} will be proved if given $\bq \in {\bf XT}(\T)$ we can find $\bv \in {\bf YT}(\T)$ such that
    \begin{align}
    \label{Korn_interp_est}
    \|\bq-\bv\|_{H_h^1(\gamma)} \lesssim  \vvvert\bq\vvvert_{1,h} + \|\bq\|_{L_2(\gamma)}.
    \end{align}

    \noindent\boxed{2} In order to define for $\bq \in {\bf XT}(\T)$ a suitable $\bv \in {\bf YT}(\T)$, first recall that $\mathcal V$ denotes the set of vertices of $\Gamma$ and $V(\bz)$ is the valence of $\bz \in \mathcal V$.  In addition, for $\bz \in \mathcal V$ let $\overline{\bf \Phi}_\bz$  be the vector $[\overline{\phi}_\bz, \overline{\phi}_\bz,\overline{\phi}_\bz]^T$ of the continuous piecewise linear hat functions with value $1$ at $\bz$ and vanishing at every other vertex. 
    We also let ${\bf \Phi}_\bz:=\overline{\bf \Phi}_\bz \circ \bP^{-1}$.  We then set
    \begin{gather*}
    \bv(\bx)= \Pi \left( \sum_{\bz \in \mathcal V} \Big (\frac{1}{V(\bz)} \sum_{\substack{T \in \T\\T \ni \bz}  } \bq_T(\bz) \Big) \star {\bf \Phi}_\bz(\bx)\right) \in {\bf YT}(\T),
    \end{gather*}
    where the multiplication $\star$ represents componentwise multiplication of vectors (given two $n$-vectors, $\bv \star \bw$ is also an $n$ vector with components given by componentwise products) and $\bq_T:= \bq|_T$ is the restriction of $\bq$ to $T$.  
    In passing,  we remark that $(A \bu) \star \Phi_\bz = A (\bu \star \Phi_\bz)$ for any $3\times 3$ matrix $A$ and any $3-$vector $\bu$.
    This property will be used repeatedly below. 
    Returning to the estimation of $\bv$, we use the definition of the infinitesimal area element \eqref{eq10} to realize that $\|{\bf \Phi}_\bz\|_{L_2(\gamma)} \lesssim h$
    so that
    \begin{gather*}
    \|\bv\|_{L_2(\gamma)}^2 \lesssim \sum_{\bz \in \mathcal V} h^2 \|\bq\|_{L_\infty(\omega_\bz)}^2 \lesssim \sum_{\bz \in \mathcal V} h^2 \|\bq \circ \bP \|_{L_\infty(\bP^{-1}(\omega_\bz))}^2,
    \end{gather*} 
    where $\omega_\bz \subset \gamma$ denotes the support of $\bf \Phi_\bz$. 
    Using this, an inverse estimate, the bounded overlap of the supports $\overline{\omega}_\bz:= \bP^{-1} (\omega_\bz)$ (see \eqref{d:shape-reg} and \eqref{d:valence}) and the norm equivalence \eqref{L2_equiv},  we find that
    \begin{gather*}
    \|\bv\|_{L_2(\gamma)}^2 \lesssim \sum_{\bz \in \mathcal V} \|\bq \circ \bP\|_{L_2(\overline{\omega}_\bz)}^2  \lesssim \|\bq \circ \bP\|_{L_2(\Gamma)}^2
    \lesssim \|\bq\|_{L_2(\gamma)}^2.
    \end{gather*}
    Thus $\|\bq-\bv\|_{L_2(\gamma)} \lesssim \|\bq\|_{L_2(\gamma)}$ and \eqref{Korn_interp_est} will follow upon proving that 
    \begin{gather*}
    \sum_{T \in \T} \|\nabla_\gamma (\bq-\bv)\|_{L_2(T)}^2 \lesssim  \vvvert\bq\vvvert^2_{1,h} + \|\bq\|_{L_2(\gamma)}^2.
    \end{gather*} 

    \noindent\boxed{3} We bound $\nabla_\gamma(\bq-\bv)$ in two steps.  Given $T \in \T$, let 
    \begin{align}
    \label{vt_def}
    \bv_T(\bx):=\Pi \sum_{\bz \in \mathcal V_T} \left(\bq_T(\bz) \star {\bf \Phi}_\bz(\bx)\right),
    \end{align}
    where $\mathcal V_T$ is the set of vertices of $T$.  First we consider the difference $\bv-\bv_T$.  
    We use $\sum_{\substack{T \in \T\\T \ni \bz}} 1=V(\bz)$ to realize that 
    \begin{gather*}
    \bv-\bv_T=\Pi \sum_{\bz \in \mathcal V_T} \Big(\frac{1}{V(\bz)} \sum_{\substack{T' \in \T\\T' \ni \bz}} (\bq_{T'}(\bz)-\bq_T(\bz)) \Big) \star {\bf \Phi}_\bz.
    \end{gather*} 
    Because $\|{\bf \Phi}_\bz\|_{H^1(T)} \lesssim 1$, we find
    \begin{gather*}
        \|\nabla_\gamma(\bv-\bv_T)\|_{L_2(T)} 
        \lesssim 
        \sup_{\bz \in \mathcal V_T} 
        \sup_{\substack{T' \in \T\\T' \ni \bz}} 
        |(\bq_{T}-\bq_{T'})(\bz)|.
    \end{gather*}
   Not all $T'$ with $z \in T'$ share an edge with $T$, but there exists a chain of adjacent elements $\{T_1,\dots,T_m\} \subset \omega_T$ with $T_1=T'$, $T_m=T$ and $m=m(\bz)$ smaller than $M$ in \eqref{d:valence} so that
    \begin{equation}\label{e:sup}
        \|\nabla_\gamma(\bv-\bv_T)\|_{L_2(T)} \lesssim \sup_{\bz \in \mathcal V_T} \sup_{i=1,\dots,m(\bz)-1} |(\bq_{T_i}-\bq_{T_{i+1}})(\bz)|.
    \end{equation}
    We now use the relation \eqref{eq10} between $\bq$ and $\barq:=\piola_{\bP^{-1}} \bq$ and the expression \eqref{eq:mu_def}  of the infinitesimal area (recalling that $\Gamma$ interpolates $\gamma$ so $d(\bz) = 0$) to deduce that
    \begin{gather*}
        \bq_{T_i}(\bz)-\bq_{T_{i+1}}(\bz)
        =
        \Pi(\bz) \Big (\frac{\barq_{\barT_i}(\bz)}{\bnu(\bz)\cdot \bnu_{\Gamma, \barT_i}}-\frac{\barq_{\barT_{i+1}}(\bz)}{\bnu(\bz)\cdot \bnu_{\Gamma, \barT_{i+1}}}    \Big ),
    \end{gather*}
    where $\overline{T}_i:= \bP^{-1}(T_i)$ and $\bnu_{\Gamma,\overline{T}_i} := \bnu_\Gamma|_{\overline{T}_i}$.
    Since 
    \begin{equation}\label{e:geom_norm}
        \frac{1}{\bnu \cdot \bnu_{\Gamma, \overline{T}_i}} -1
        = 
        \frac{1}{2 \bnu \cdot \bnu_{\Gamma, \overline{T}_i}}
        |\bnu-\bnu_{\Gamma,\overline{T}_i}|^2 
        \lesssim 
        h^2
    \end{equation}
    for $C^2$ surfaces $\gamma$,
    we deduce that
    \begin{align}
        \label{aux_comp}
        \left | \bq_{T_i}(\bz)-\bq_{T_{i+1}}(\bz) \right |  \lesssim h^2\|\barq\|_{L_\infty(\overline{\omega}_\bz)} + |\barq_{\barT_i}(\bz)-\barq_{\barT_{i+1}}(\bz)|.
    \end{align}
    The jump $\barq_{\barT_i}-\barq_{\barT_{i+1}}$ is a polynomial on the edge $\overline{e}$ shared by $\barT_i$ and $\barT_{i+1}$. Thus, an inverse inequality yields
    \begin{gather*}
        \left| \barq_{\barT_i}(\bz)-\barq_{\barT_{i+1}}(\bz)\right| 
        \lesssim 
        h^{-1/2} \| \barq_{\barT_i}-\barq_{\barT_{i+1}} \|_{L_2(\overline{e})}.
    \end{gather*}
    This, relation \eqref{eq9} between $\bq$ and $\barq$, and a computation similar to that leading to \eqref{aux_comp} guarantee that
    \begin{equation}\label{e:supb}
        \left| \barq_{\barT_i}(\bz)-\barq_{\barT_{i+1}}(\bz) \right|  
        \lesssim
        h \|\barq\|_{L_\infty(\overline{\omega}_\bz)} + h^{-1/2} \| \bq_{T_i}-\bq_{T_{i+1}} \|_{L_2(e)}.
    \end{equation}
    Returning to \eqref{e:sup} with \eqref{aux_comp} and \eqref{e:supb} at hand implies
    \begin{gather*}
        \|\nabla_\gamma(\bv-\bv_T)\|_{L_2(T)} 
        \lesssim 
        h \|\barq\|_{L_\infty(\overline{\omega}_T)}
        + 
        \sup_{\substack{e \subset \Sigma \\ e  \subset \omega_T}} 
        h^{-1/2} \|[\bq]\|_{L_2(e)},
    \end{gather*}
    where $\omega_T := \cup_{\bz \in T} \omega_\bz$. 
    The estimate for  $\nabla_\gamma (\bv-\bv_T)$, namely
    \begin{gather*}
        \sum_{T \in \T} \|\nabla_\gamma (\bv-\bv_T)\|_{L_2(T)}^2 
        \lesssim 
        h^{-1} \|[\bq]\|_{L_2(\Sigma)}^2 + \|\bq\|_{L_2(\gamma)}^2,
    \end{gather*}
    is obtained by invoking an inverse inequality 
    $h \|\barq\|_{L_\infty(\overline{\omega}_T)} \lesssim \|\barq\|_{L_2(\overline{\omega}_T)}$,
    the equivalence of norms on $T$ and $\barT$, and summing over $T \in \T$.

    \noindent\boxed{4} We now bound the term $\nabla_{\gamma} (\bv_T- \bq)$. 
    Note that since ${\bf XT} (\bMT)$ consists of piecewise linear vector fields on $\Gamma$, 
    for $\barT \subset \Gamma$ we have that 
    \begin{gather*}
        \barq_{\barT}
        =
        \sum_{\bz \in \mathcal V_T} \barq_{\barT}(\bz) \star \overline{\bf \Phi}_\bz 
        = 
        \sum_{\bz \in \mathcal V_T} \Pi_{\Gamma,\barT}\barq_{\barT}(\bz) \star \overline{\bf \Phi}_\bz,
    \end{gather*}  
    where $ \Pi_{\Gamma,\barT} :=  \Pi_{\Gamma}|_{\barT}$.
    Thus using the relation $\eqref{eq10}$ between $\bq$ and $\barq$, recalling the definition \eqref{vt_def} of $\bv_T$, and using that $\Pi-d\bH=\Pi [\bI-d\bH]$ , we find that on $T$ 
    \begin{align}
        \begin{aligned}
        \bv_T &- \bq = \Pi \sum_{\bz \in \mathcal V_T} \bq_T(\bz) \star {\bf \Phi}_\bz - \Pi \frac{1}{\mu} [\bI -d\bH] \sum_{\bz \in \mathcal V_T} \Pi_{\Gamma,\barT}\barq_{\barT}(\bz) \star {\bf \Phi}_\bz
        \\ & = \Pi \sum_{\bz \in \mathcal V_T }\left(   \frac{1}{\mu(\bz)} [ \Pi - d \bH](\bz) \barq_{\barT}(\bz)\star {\bf \Phi}_\bz- \frac{1}{\mu} [\bI -d\bH]   \Pi_{\Gamma,\barT}\barq_{\barT}(\bz) \star {\bf \Phi}_\bz \right)
        \\ & = \Pi \sum_{\bz \in \mathcal V_T}\left( \frac{1}{\bnu(\bz) \cdot \bnu_{\Gamma, \barT}} \Pi(\bz) \barq_{\barT}(\bz) \star {\bf  \Phi}_\bz- \frac{1}{\mu} [\bI -d\bH]  \Pi_{\Gamma,\barT} \barq_{\barT}(\bz) \star {\bf \Phi}_\bz\right).
        \end{aligned}
    \end{align}
    Here we have also taken advantage of the fact $\bz \in \Gamma \cap \gamma$ to use $d(\bz) = 0$ and so $\mu(\bz)=\bnu\cdot \bnu_\Gamma$. 
    Recall that $\nabla_\gamma (\bq-\bv_T) = \Pi \nabla (\bq-\bv_T)\Pi$,
    where abusing notation we have denoted by $\bq$ the extension of $\bq:\gamma \rightarrow \mathbb R^3$ to $\mathcal N$ using the distance lift, i.e. $\bq(\bx) = \bq(\bP(\bx))$.
    Using the product rule, we set
    $\nabla_\gamma (\bq-\bv_T) = \Pi(I+II+III)\Pi$,
    where
    \begin{align*}
        I&:= \nabla \Pi \sum_{\bz \in \mathcal V_T}\left( \frac{1}{\bnu(\bz) \cdot \bnu_{\Gamma, \barT}} \Pi(\bz) \barq_{\barT}(\bz) \star {\bf  \Phi}_\bz- \frac{1}{\mu} [\bI -d\bH]  \Pi_{\Gamma,\barT} \barq_{\barT}(\bz) \star {\bf \Phi}_\bz\right),\\
        II&:=  -\Pi \nabla \left( \frac{1}{\mu} [\bI -d\bH] \right) \sum_{\bz \in \mathcal V_T} \Pi_{\Gamma,\barT} \barq_{\barT}(\bz) \star {\bf \Phi}_\bz\\
        III&:=  \Pi\sum_{\bz \in \mathcal V_T} \left( \frac{1}{\bnu(\bz) \cdot \bnu_{\Gamma, \barT}} \Pi(\bz)- \frac{1}{\mu} [\bI -d\bH] \Pi_{\Gamma,\barT} \right) \nabla\left(\barq_{\barT}(\bz) \star  {\bf  \Phi}_\bz\right).
    \end{align*}
    Before estimating each term, we recall that the assumed regularity on $\gamma$ guarantees that 
    \begin{equation}\label{e:tmp_geom}
        \| \Pi \|_{W^1_\infty(\mathcal N)} + \| \Pi_{\Gamma} \|_{W^1_\infty(\Gamma)} + \| \bH \|_{W^1_\infty(\mathcal N)} \lesssim 1
    \end{equation}
    and $\|d \|_{L_\infty(\Gamma)} + \|1-\mu^{-1}\|_{L_\infty(\mathcal N)}  \lesssim h^2$, which together with $d \in C^3(\mathcal{N})$ implies
    \begin{equation}\label{e:tmp_geom2}
        \| \mu^{-1} (\bI-d\bH)-\bI\|_{L_\infty(\Gamma)} + h^2 \| \nabla(\mu^{-1} (\bI-d\bH))\|_{L_\infty(\Gamma)} \lesssim h^2.
    \end{equation}
    In addition, $\bnu=\nabla d$ is $C^1$ and $|\bnu-\bnu_\Gamma| \lesssim h$, so
    \begin{equation}\label{e:tmp_geom3}
        \|\Pi-\Pi(\bz)\|_{L_\infty(T)} + |\Pi-\Pi_\Gamma| \lesssim h \Longrightarrow  |\Pi(\bz)-\Pi_{\Gamma,\barT}| \lesssim h.
    \end{equation}

    We can now start with $I$ and invoke \eqref{e:tmp_geom}, \eqref{e:tmp_geom2} together with \eqref{e:geom_norm} to write
    \begin{gather*}
        \|I\|_{L_\infty(T)} \lesssim h^2|\barq_{\barT}(\bz)| + | (\Pi(\bz)  - \Pi_{\Gamma,\barT})\barq_{\barT}(\bz)|.
    \end{gather*}
    In view of \eqref{e:tmp_geom3}, this leads to
    \begin{gather*}
        \|I\|_{L_\infty(T)} \lesssim h | \barq_{\barT}(\bz)|.
    \end{gather*}
    Thus, an inverse inequality and the norm equivalence  \eqref{L2_equiv} shows that
    \begin{gather*}
        \|I\|_{L_2(T)}  \lesssim h\|I\|_{L_\infty(T)} \lesssim h \|\barq\|_{L_\infty(\barT)} \lesssim \|\barq\|_{L_2(\barT)}.
    \end{gather*}

    To estimate $II$, we take again advantage of \eqref{e:tmp_geom} and \eqref{e:tmp_geom2} to obtain
    \begin{gather*}
        \|II\|_{L_2(T)} \lesssim h\|II\|_{L_\infty(T)}\lesssim h\|\barq\|_{L_\infty(\barT)} \lesssim \|\bq\|_{L_2(T)}.
    \end{gather*}
    Finally, for $III$, we estimate (up to a multiplicative constant) $\| III \|_{L_2(T)}$ by
    \begin{gather*}
        \max_{\bz \in \mathcal V_T} \left \|\Pi \Big ( \frac{1}{\bnu(\bz)\cdot \bnu_{\Gamma, \barT}} \Pi(\bz)-\frac{1}{\mu}[\bI-d \bH] \Big ) \Pi_\Gamma \right\|_{L_\infty(\barT)} \|\barq \|_{L_\infty(\barT)} \|\nabla {\bf \Phi}_\bz\|_{L_2(T)}.
    \end{gather*}
    Using \eqref{e:tmp_geom2}, \eqref{e:geom_norm}, $\|\nabla {\bf \Phi}_\bz\|_{L_2(T)} \lesssim 1$ and \eqref{e:tmp_geom3}, we arrive at
    \begin{gather*}
        \|III\|_{L_2(T)} \lesssim h \|\barq\|_{L_\infty(\barT)} \lesssim \|\bq\|_{L_2(T)}.
    \end{gather*}
    Gathering the above estimates for $I$, $II$, and $III$ yields
    \begin{gather*}
        \|\nabla_\gamma (\bq-\bv_T)\|_{L_2(T)} \lesssim \|\bq\|_{L_2(T)},
    \end{gather*}
    which completes the proof.
\end{proof}

\bibliographystyle{siam}

\end{document}